\numberwithin{equation}{section}
\theoremstyle{plain}
\newtheorem{theorem}{Theorem}[section]
\newtheorem{lemma}[theorem]{Lemma}
\newtheorem{corollary}[theorem]{Corollary}
\newtheorem{proposition}[theorem]{Proposition}
\newtheorem{definition}{Definition}[section]
\theoremstyle{definition}
\def\disp{\displaystyle}
\newcommand{\la}{\langle}
\newcommand{\ra}{\rangle}
\newcommand{\ve}{\varepsilon}
\newcommand{\gph}{{\rm gph}\,}
\def\tto{\;{\lower 1pt\hbox{$\rightarrow$}}\kern-10pt
\hbox{\raise 2pt\hbox{$\rightarrow$}}\;}
\newcommand{\B}{\mathbbm{B}}
\newcommand{\NN}{\mathbbm{N}}
\newcommand{\os}{\bar s}
\newcommand{\gm}{\gamma}
\newcommand{\dom}{{\rm dom\,}}
\def\dn{\downarrow}
\newcommand{\al}{\alpha}
\newcommand{\prox}{{\rm prox}}
\newcommand{\nexto}{\kern -0.54em}
\newcommand{\dZ}{{\cal Z \kern -0.7em Z}}
\newcommand{\dC}{{\rm\hbox{C \kern-0.8em\raise0.2ex\hbox{\vrule height5.4pt width0.7pt}}}}
\newcommand{\dQ}{{\rm\hbox{Q \kern-0.85em\raise0.25ex\hbox{\vrule height5.4pt width0.7pt}}}}
\newcommand{\Id}{{\rm Id}\,}
\newenvironment{retraitsimple}{\begin{list}{--~}{
 \topsep=0.3ex \itemsep=0.3ex \labelsep=0em \parsep=0em
 \listparindent=1em \itemindent=0em
 \settowidth{\labelwidth}{--~} \leftmargin=\labelwidth
}}{\end{list}}
\newcommand{\R}{\mathbbm{R}}
\newcommand{\PP}{\mathcal{P}}
\newcommand{\kk}{\kappa}
\newcommand{\lm}{\lambda}
\newcommand{\ox}{\bar x}
\newcommand{\oy}{\bar y}
\newcommand{\ou}{\bar u}
\newcommand{\sign}{{\rm sign}\,}
\newcommand{\dsty}{\displaystyle}
\newtheorem{Theorem}{Theorem}[section]
\newtheorem{Remark}[Theorem]{Remark}
\title{On the Q-linear convergence of forward-backward splitting method and uniqueness of optimal solution to Lasso}
\author{J.Y. Bello-Cruz\thanks{Department of Mathematical Sciences, Northern Illinois University, DeKalb, IL 60115, USA. E-mail: yunierbello@niu.edu}\and G. Li \thanks{
Department of Applied Mathematics, University of New South Wales, Sydney
2052, Australia. E-mail: g.li@unsw.edu.au}  \and T.T.A.
Nghia\thanks{Department of Mathematics and Statistics, Oakland
University, Rochester, MI 48309, USA. E-mail: nttran@oakland.edu}}
\begin{document}
\maketitle
\begin{abstract}

\noindent In this paper, by using tools of second-order variational analysis,
we study the popular forward-backward splitting method with Beck-Teboulle's line-search for solving convex optimization problem where the objective function can be split into the sum of a differentiable function and a possible nonsmooth function. 
We first establish that this method exhibits global convergence to an optimal solution of the problem (if it exists) without the usual assumption that the gradient of the differentiable function involved is globally Lipschitz continuous.
We also obtain the  $o(k^{-1})$ complexity  for the functional value sequence when  this usual assumption is weaken from global Lipschitz continuity to local
Lipschitz continuity; improving the existing $\mathcal{O}(k^{-1})$ complexity result.
We then derive the local and global Q-linear convergence of the method in terms of both the function value sequence and the iterative sequence, under a general metric subregularity assumption which is automatically satisfied for convex piecewise-linear-quadratic optimization problems.  In particular, we provide verifiable sufficient conditions for
 metric subregularity
assumptions, and so, local and global Q-linear convergence of the proposed method for broad structured optimization problems arise in machine learning and signal processing including Poisson linear inverse problem, the partly smooth optimization problems,
as well as the $\ell_1$-regularized optimization problems. Our results complement the current literature by providing $Q$-linear convergence result to the forward-backward splitting method under weaker
assumptions. Moreover, via this approach, we obtain several full characterizations for the uniqueness of optimal solution to Lasso problem, which covers some recent results in this direction.
\end{abstract}

\noindent {\bf Keywords:} Iteration
complexity; Nonsmooth and convex optimization problems; Forward-backward splitting method; Linear convergence; Uniqueness; Lasso; Metric subregularity; Variational Analysis.

\noindent {\bf Mathematics  Subject  Classification  (2010):} 65K05; 90C25; 90C30. 
\vspace{-0.1in}
\section{ Introduction}
\setcounter{equation}{0}
In this paper we  consider the following optimization problem
\begin{equation}\label{prob}
\min_{x\in \R^n}\quad F(x):=f(x)+g(x),
\end{equation}
where $f,g:\R^n\to \R\cup\{\infty\}$ are proper, lower semi-continuous, and convex functions and $f$ is differentiable in its domain. Problems in this format have been appeared in many different fields of science and engineering including machine learning, compressed sensing, and image processing.  A particular class of \eqref{prob} known as $\ell_1$-regularized problem
\begin{equation}\label{Prof}
\min_{x\in \R^n}\quad F_1(x):=f(x)+\mu\|x\|_1
\end{equation}
with constant $\mu>0$ has been attracted huge attention and widely used in signal processing and statistics to derive sparse optimal solutions. One of the most popular cases of \eqref{Prof} is the  Lasso problem \cite{T} (also known as $\ell_1$-regularized least square optimization problem) formulated by
\begin{equation}\label{Pros}
\min_{x\in \R^n}\quad F_2(x):=\frac{1}{2}\|Ax-b\|^2+\mu\|x\|_1,
\end{equation}
where $A$ is an $m\times n$ matrix and $b$ is a vector in $\R^m$.

Among many methods of solving \eqref{prob}, the forward-backward splitting method (FBS in brief) \cite{BC,BL, BTe, CP, CW, HYZ, NB} is well-known due to its simplicity and efficiency as described below:
\begin{equation}\label{FBS}
x^{k+1}=\prox_{\al_k g}(x^k-\al_k\nabla f(x^k))
\end{equation}
with  the proximal operator defined later in \eqref{prox} and the stepsize $\al_k>0$. The global convergence of FBS to an optimal solution of problem \eqref{prob} and the complexity $\mathcal{O}(k^{-1})$ of the functional $F(x^k)$ to the minimum value are usually proved under the  assumption that $\nabla f$ is global Lipschitz continuous. By using some line searches motivated by the work of Tseng \cite{tseng}, \cite{YN} Bello-Cruz and Nghia show that  FBS indeed converges globally without the aforementioned Lipschitz condition, while  the complexity of functional value is improved to $o(k^{-1})$ when the $\nabla f$ is only {\em locally} Lipschitz continuous. A recent work of Bauschke-Bolte-Teboulle \cite{BBT} also tackles the absence of Lipschitz continuous gradient on $f$ by introducing the so-called NoLips algorithm close to FBS with the involvement of Bregman distance. Their algorithm also shares the sublinear complexity $\mathcal{O}(\frac{1}{k})$ of the functional sequence $(F(x^k))_{k\in\NN}$ under some mild assumptions, and guarantees the global convergence  of the solution sequence $(x^k)_{k\in\NN}$ with an additional hypothesis on the closedness of the domain of the auxiliary Legendre function defined there. Unfortunately, the latter assumption is not satisfied for the Poisson inverse regularized problems  with Kullback-Liebler divergence \cite{C,VSK}, one of the main applications in \cite{BBT}. This situation is overcome in our paper by revisiting FBS with the line search of Beck-Teboulle \cite{BTe}. Under some minimal assumptions on initial data weaker than those in \cite{YN,S17}, we show that the sequence $(x^k)_{k\in\NN}$ in \eqref{FBS} is globally convergent to an optimal solution (if it exists) without any Lipschitz continuity on the gradient of $f$. Moreover, the sublinear rate $o(\frac{1}{k})$ is obtained when the gradient $\nabla f$ is {\em locally} Lipschitz continuous on its domain, which is automatic in the case of Kullback-Liebler divergence.

 Our paper mainly devotes to the linear convergence of FBS.  Despite of the popularity of FBS, the linear convergence of this method has been established  recently throughout some  error bound conditions \cite{DL,NNG,ZS} with the base from \cite{LT} or Kurdya-\L ojasiewicz inequality \cite{BNPS,LP}. It is worth mentioning that those conditions are  somehow equivalent; see, e.g., \cite{BNPS,DL}. Our approach is close to the recent work of Drusvyatskiy-Lewis \cite{DL}, Bauschke-Phan-Noll \cite{BPN}, and Zhou-So \cite{ZS} by using the so-called {\em second-order growth condition}  and {\em metric subregularity} of the subdifferentials \cite{AC,AG,DMN}; however, our proof of linear convergence is more direct without using the error bound \cite{LT} and reveals the $Q$-linear convergence rather than the $R$-one obtained in all the aforementioned works.

Local linear convergence of FBS iterative sequence to solve some structured optimization problems of \eqref{prob} has been recently established in \cite{BL,LFP,LFP2,HYZ} when the function $g$ is {\em partly smooth relative to a manifold} $ \mathcal{M}$ by using the idea of finite support identification. This notion  introduced by Lewis \cite{L} allows Liang-Fadili-Peyr\'e \cite{LFP,LFP2} to cover in their work many important problems such as the total variation semi-norm, the $\ell_1$-norm \eqref{Prof}, the $\ell_\infty$-norm, and the nuclear norm problems. In their paper, a second-order condition was introduced to guarantee the Q-local linear convergence of FBS sequence generated by \eqref{FBS} under the non-degeneracy assumption \cite{L}. When applying our results to this structured setting, we only need a weaker condition. Using the calculus in \cite{LZ} is extremely helpful in computing the {\em second-order limiting subdifferential} \cite{M1} of a partly smooth function, but it technically sticks with the non-degeneracy assumption. When considering the $\ell_1$-regularized problem \eqref{Prof}, we are able to avoid this assumption and introduce a new second-order condition by employing  the recent result of Artacho-Geoffroy \cite{AG2} who initiate a new characterization for the  strong metrical subregularity of the subdifferential in term of graphical derivative \cite{DR}. This allows us to improve the  well-known work of Hale-Yin-Zhang \cite{HYZ} in two aspects: (a) We completely ignore the aforementioned non-degeneracy assumption (b) Our second-order condition is strictly weaker than the one in \cite[Theorem~4.10]{HYZ}. Our wider view is that when considering particular optimization problems listed in the spirit of \cite{LFP,LFP2},  the assumption of non-degeneracy may be not necessary. Furthermore, we revisit the {\em iterative shrinkage thresholding algorithm} (ISTA) \cite{BTe,dau}, which is indeed FBS for solving Lasso \eqref{Pros}. It is well-known that the complexity of this algorithm is $\mathcal{O}(k^{-1})$; however, the recent works \cite{LFP,TBZ} shows the potential of local linear convergence.  The stronger conclusion in this direction is obtained lately by   Bolte-Nguyen-Peypouquet-Suter \cite{BNPS} that: ISTA is   $R$-linearly convergent, but  the rate may depend on the initial point. Inspired by this achievement, we provide two new information: (c) Both functional sequence $(F(x^k))_{k\in \NN}$ and iterative sequence $(x^k)_{k\in \NN}$ from  ISTA are indeed   {\em globally} $Q$-linearly convergent  (d) They are eventually $Q$-linearly convergent to an optimal solution with a uniform rate that does not depend on the initial point. Another application of our work is solving Poisson inverse regularized problem \cite{BBT,C,VSK} by using FBS. We show the linear convergence of this method in contrast to the sublinear complexity $O(\frac{1}{k})$ obtained recently in \cite{BBT,S17} by different methods.

Finally, we study the uniqueness of optimal solution to Lasso problem as one of the main applications from our approach of using second-order variational analysis. This property of optimal solution to  \eqref{Pros} has been investigated vastly in the literature with immediate  implementations to recovering sparse signals in compressed sensing; see, e.g., \cite{F,Ti,Tr,Wa, ZYC, ZYY} and the references therein. It is also used in \cite{BL,TBZ} to establish the linear convergence of  ISTA. It seems to us that Fuchs \cite{F} initializes this direction by introducing a simple sufficient condition for this property, which has been extended in other cited papers. Then Tibshirani in \cite{Ti} shows that a  sufficient condition closely related to Fuchs' is also necessary for almost all $b$ in \eqref{Pros}.  The first full characterization  for this property has been obtained recently in \cite{ZYC} by using results of strong duality in linear programming. This characterization, which is based on an existence of a vector  satisfying a system of linear equations and inequalities, allows \cite{ZYC} to  recover the aforementioned sufficient conditions  and provide some situations in which these conditions turn necessary. As a direct application of our different approach, we also derive several new full characterizations. Our conditions in terms of {\em positively linear independence} and {\em Slater type} are well-recognized to be verifiable.

The outline of our paper is as follows. Section 2 briefly presents the relationship between the metric subregularity of the subdifferential, quadratic growth condition, and Kurdya-\L ojasiewicz inequality. A second-order characterization for quadratic growth condition in term of {\em graphical derivative} is also recalled here. This section serves as the main tool for us to obtain the linear convergence of FBS. The reader could find further details about this topic in \cite{AC,AG,AG2,DMN}. In Section 3, we provide the global convergence of FBS without the global Lipschitz condition on the gradient of $f$ and also the general complexity of $o(k^{-1})$. The study in this section is somewhat similar to the recent work \cite{YN}. However, we consider a different line search from those in \cite{YN} and our standing assumption is much weaker, which allows us to cover broader classes, e.g., the Poison inverse regularized problems studied in Section~5.1. The central part of our paper is Section 4, in which we show the Q-linear convergence of FBS under the metric subregularity of the subdifferential.  Section 5 devotes to many applications of our work to structured optimization problems involving Poison inverse regularized, partial smoothness, $\ell_1$-regularized, and
$\ell_1$-regularized least square optimization problems. In Section 6, we obtain several new full characterizations to the uniqueness of optimal solution to Lasso problem \eqref{Pros}. The final Section 7 gives the conclusions and some potential future works in this direction.

\section{Metric subregularity of the subdifferential and quadratic growth condition}\label{prelim}
Throughout the paper, $\R^n$ is the usual Euclidean space with dimension $n$ where $\|\cdot\|$ and $\langle \cdot, \cdot \rangle$ denote the corresponding Euclidean norm and inner product in $\R^n$. We use $\Gamma_0(\R^n)$ to denote the set of proper, lower semicontinuous, and convex functions on $\R^n$. Let $h\in \Gamma_0(\R^n)$, we write  $\dom h:=\{x\in\R^n\,|\; h(x)<+\infty\}$. The subdifferential of $h$ at $\ox\in \dom h$ is defined by
\begin{equation}\label{sub-inq}
\partial h(\ox):=\{v\in \R^n\,|\; \la v,x-\ox\ra\le h(x)-h(\ox),\; x\in \R^n\}.
\end{equation}
Let $G:\R^n\tto \R^m$ be a set-valued mapping. We define the domain and the graph of $G$, respectively as following:
\begin{equation*}
\dom G:=\{x\in \R^n|\; G(x)\neq\emptyset\}\quad \mbox{and}\quad \gph G:=\{(x,y)\in \R^n\times\R^m|\; y\in G(x)\}.
\end{equation*}

One of the key notions used in our paper is the so-called metric subregularity defined as follows; see \cite[Section~3H and 3I]{DR}.

\begin{definition}[Metric subregularity and strong metric subregularity]\label{MSR} We say $G:\R^n\tto\R^m$ is {\rm metrically subregular} at $\ox\in \dom G$ for $\oy\in G(\ox)$ with modulus $\kk>0$ if there exists a neighborhood $U\subset \R^n$ of $\ox$ such that
\begin{equation}\label{SR}
d(x;G^{-1}(\oy))\le \kk d(\oy; G(x))\quad \mbox{and}\quad x\in U,
\end{equation}
where $d(x;\Omega)$ is the distance from $x\in \R^n$ to a set $\Omega\subset\R^n$ with the convention that $d(x;\emptyset)=\infty$.
Furthermore, we say $G$ is {\em strongly metrically subregular} at $\ox$ for $\oy\in G(\ox)$ with modulus $\kk>0$  if $G$ is metrically subregular at $\ox$ for $\oy$ with modulus $\kk$ and $\ox$ is an isolated point of $G^{-1}(\oy)$.
\end{definition}

Metric subregularity is automatic when $G$ is a {\em piecewise polyhedral}  mapping, i.e., $\gph G$  the union of finitely
many convex polyhedral sets; see, e.g., \cite[Proposition~3H.1]{DR} and \cite[Example~9.57]{rw}, where its roots comes from Robinson \cite{R} and the landmark paper of Hoffman \cite{H}.
\begin{proposition}[Metric subregularity of polyhedral mappings] \label{MSP} Let $G:\R^n\tto\R^m$ be a piecewise polyhedral mapping. Then $G$ is metrically subregular at any $\ox\in \dom G$ for any $\oy\in G(\ox)$ with a uniform modulus $\kk>0$ that does not depend on $(\ox,\oy)$.
\end{proposition}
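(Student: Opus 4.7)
The plan is to deduce Proposition~\ref{MSP} from Hoffman's classical error bound for systems of linear inequalities, a tool that is tailor-made for polyhedral data. First I would fix a finite decomposition $\gph G = \bigcup_{i=1}^N P_i$, where each polyhedron can be written in matrix form $P_i = \{(x,y)\in \R^n\times\R^m : A_i x + B_i y \leq c_i\}$. Hoffman's lemma then supplies a constant $L_i>0$ depending only on $A_i$ such that, whenever $\{u : A_i u \leq d\}$ is nonempty, $d\bigl(u,\{u' : A_i u' \leq d\}\bigr) \leq L_i \|(A_i u - d)_+\|$ for every $u\in\R^n$, where $(\cdot)_+$ denotes the coordinatewise positive part.

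Next, I would fix an arbitrary $\ox\in\dom G$ and $\oy\in G(\ox)$ and let $\mathcal{I} := \{i : (\ox,\oy)\in P_i\}$. Since each $P_j$ with $j\notin\mathcal{I}$ is closed and misses $(\ox,\oy)$, I can choose neighborhoods $U$ of $\ox$ and $V$ of $\oy$ so that $(U\times V)\cap P_j = \emptyset$ for all $j\notin\mathcal{I}$. For $x\in U$ with $G(x)\neq\emptyset$, pick $y^*\in G(x)$ realizing $d(\oy;G(x))$. If $y^*\in V$, then $(x,y^*)\in P_i$ for some $i\in\mathcal{I}$; since $\ox$ is feasible for the system $A_i u \leq c_i - B_i\oy$, applying Hoffman's lemma with right-hand side $d := c_i - B_i \oy$ to the point $x$ yields
\[
d\bigl(x;G^{-1}(\oy)\bigr) \;\leq\; L_i\,\|B_i\|\,\|y^* - \oy\| \;=\; L_i\,\|B_i\|\,d\bigl(\oy;G(x)\bigr),
\]
using that $A_i x \leq c_i - B_i y^* = (c_i - B_i \oy) + B_i(\oy - y^*)$. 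If $y^*\notin V$, then $\|y^* - \oy\|$ is bounded below by a fixed positive $\rho$ depending only on $V$, so after shrinking $U$ into the ball $\ox + \kk\rho\,\B$ (with $\kk$ as below), the trivial estimate $d(x;G^{-1}(\oy)) \leq \|x-\ox\|$ already suffices. When $G(x) = \emptyset$, the inequality is vacuous by the convention $d(\oy;\emptyset) = \infty$.

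Finally, setting $\kk := \max_{1\leq i\leq N} L_i\|B_i\|$ yields a constant determined entirely by the matrices $A_i,B_i$ describing the fixed decomposition of $\gph G$, and in particular independent of $(\ox,\oy)$; this gives the uniform modulus claim. The main subtle point I expect is the second case above, where the nearest $y^*\in G(x)$ to $\oy$ falls outside $V$ and Hoffman's bound cannot be applied inside any single $P_i\in\mathcal{I}$. This is absorbed into the choice of the neighborhood $U$ of $\ox$, which is permitted to depend on $(\ox,\oy)$, while the modulus $\kk$ stays uniform over all base points.
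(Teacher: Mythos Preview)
Your argument is correct. The paper, by contrast, does not reprove anything: it simply observes that the inverse of a piecewise polyhedral mapping is again piecewise polyhedral and then invokes \cite[Propositions~3H.1 and~3H.3]{DR}, which give the outer Lipschitz (calmness) property of piecewise polyhedral mappings with a uniform constant; calmness of $G^{-1}$ at $(\oy,\ox)$ is exactly metric subregularity of $G$ at $(\ox,\oy)$. Your route is more self-contained: you work directly on a fixed decomposition $\gph G=\bigcup_i P_i$, apply Hoffman's lemma slice-by-slice to the $x$-section of each $P_i$, and absorb the ``wrong-piece'' case ($y^*\notin V$) into the choice of the local neighborhood $U$. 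This has the advantage of producing an explicit uniform modulus $\kk=\max_i L_i\|B_i\|$ computable from the inequality data of the decomposition, whereas the paper's citation leaves the constant implicit. The trade-off is length: the paper dispatches the result in two lines, while your argument must handle the two cases and the neighborhood bookkeeping. Both ultimately rest on Hoffman's bound, so the underlying mechanism is the same; only the packaging differs.
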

\begin{proof}
Since $G:\R^n\tto\R^m$ is  piecewise polyhedral, its inverse is
also piecewise polyhedral. Combining this with \cite[Proposition~3H.1 and Proposition~3H.3]{DR} tells us that $G$ is metrically subregular at any $\ox\in \dom G$ for any $\oy\in G(\ox)$ with a uniform modulus $\kk>0$.
\end{proof}

From the above result, $G$ is strongly  metric subregular at $\ox$ for $\oy\in G(\ox)$ when $G$ is piecewise polyhedral and $\ox$ is an isolated point of $G^{-1}(\oy)$.
Without polyhedrality, metric subregularity may be difficult to check; however, strong metric subregularity can be characterized and verified directly via the so-called {\em graphical derivative} (known also as {\em contingent derivative}); see \cite[Section~4A]{DR}.
\begin{definition}[Graphical derivative]\label{GD} Let $(\ox,\oy)\in \gph G$. The {\em graphical derivative} of $G$ at $\ox$ for $\oy$ is the mapping $DG(\ox|\oy):\R^n\tto\R^m$ such that $v\in DG(\ox|\oy)(u)$ if and only if there exist sequences $u^k\to u$, $v^k\to v$ and $t^k\dn 0$ such that $\oy+t^kv^k\in G(\ox+t^ku^k)$ for all $k\in \NN$.
\end{definition}

When ${\rm gph}\, G$ is locally closed around $(\ox,\oy)\in \gph G$, it is known from \cite[Theorem~4C.1]{DR} that $G$ is strongly metrically subregular at $\ox$ for $\oy$ if and only if
\begin{equation}\label{Levy}
DG(\ox|\oy)^{-1}(0)=\{0\}.
\end{equation}

Another important stability notion useful in our study is the so-called {\em metric regularity}  \cite[Sections~3G]{DR}  or  \cite[Definition~1.47]{M1}.

\begin{definition}[Metric regularity]\label{MTR} The set-valued mapping $G:\R^n\tto\R^m$ is {\rm metrically regular} at $\ox$ for $\oy\in G(\ox)$ with modulus $\kk>0$ if there exist neighborhoods $U\subset \R^n$ of $\ox$  and $V\subset \R^m$ of $\oy$ such that
\begin{equation}\label{MR}
d(x;G^{-1}(y))\le \kk d(y; G(x))\quad \mbox{for all}\quad (x,y)\in U\times V.
\end{equation}
Furthermore, we say $G$ is {\em strongly metrically regular} at $\ox$ for $\oy$ with modulus $\kk>0$  if there exist neighborhoods $U\subset \R^n$ of $\ox$  and $V\subset \R^m$ of $\oy$ such that \eqref{MR} is satisfied and that the map $V\ni y\mapsto G^{-1}(y)\cap U$ is single-valued.
\end{definition}

Metric regularity and strong metric regularity could be characterized fully in \cite[Theorem~4.18]{M1} and \cite[Theorem~9.40]{rw}.  However, we do not use these infinitesimal characterizations in the paper.

In later sections we mainly employ the metric subregular property of the subdifferential mapping $\partial h:\R^n\tto \R^n$ for $h\in \Gamma_0(\R^n)$ to derive the linear convergence of FBS method. We need the strict connection between metric subregularity of $\partial h$ and the growth condition established recently in \cite{AC,AG, AG2,DMN}.

\begin{proposition}[Metric subregularity of the subdifferential and growth condition]\label{SR-grow} Let $h\in \Gamma_0(\R^n)$ and $\ox$ be an optimal solution to $h$, i.e., $0\in \partial h(\ox)$.  Consider the following assertions:
\begin{enumerate}[{\rm (i)}]

\item $\partial h$ is metrically subregular at $\ox$ for $0$ with modulus $\kk>0$.

\item  $h$ satisfies the second-order growth condition at $\ox$ in the sense that: there exist   $c,\ve>0$ such that
\begin{equation}\label{grow1}
h(x)\ge h(\ox)+\frac{c}{2}d^2(x;(\partial h)^{-1}(0))\quad \mbox{for all}\quad x\in \B_\ve(\ox).
\end{equation}
\end{enumerate}
Then implication {\rm[(i)$\Longrightarrow$ (ii)]} holds, where $c$ can be chosen as $\kk^{-1}$ in \eqref{grow1}. Moreover, the converse implication {\rm [(ii)$\Longrightarrow$(i)]} is also satisfied with $\kk=\frac{2}{c}$.
\end{proposition}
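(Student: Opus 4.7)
The plan is to prove the two implications separately, starting with the easier one.

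For (ii) $\Longrightarrow$ (i) with $\kk = 2/c$, I fix $x$ close to $\ox$ and any $v \in \partial h(x)$, and let $p$ be a projection of $x$ onto the convex set of minimizers $(\partial h)^{-1}(0)$; write $s := \|x - p\| = d(x; (\partial h)^{-1}(0))$. Applying the subgradient inequality \eqref{sub-inq} at $x$ with test point $p$ gives $h(\ox) = h(p) \geq h(x) + \la v, p - x\ra$, so Cauchy--Schwarz produces $h(x) - h(\ox) \leq \|v\|\, s$. Substituting the second-order growth bound $(c/2) s^2 \leq h(x) - h(\ox)$ from (ii) then yields $s \leq (2/c)\|v\|$; infimizing over $v \in \partial h(x)$ gives the metric subregularity estimate with modulus $2/c$.

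For the main direction (i) $\Longrightarrow$ (ii) with the sharp constant $c = 1/\kk$, I would first set up a comparison segment controlled by metric subregularity and then convert the resulting slope bound into function-value growth. Fix $x$ close to $\ox$ with $s := d(x; (\partial h)^{-1}(0)) > 0$, let $p$ denote the projection of $x$, and set $y_t := p + t(x - p)$ for $t \in [0, 1]$. Since $(\partial h)^{-1}(0)$ is closed and convex and $x - p$ is normal to it at $p$, a standard convex-projection calculation shows $p$ is also the projection of $y_t$ for every $t \in [0, 1]$, so $d(y_t; (\partial h)^{-1}(0)) = ts$. Taking $x$ close enough to $\ox$ places the whole segment in the neighborhood where (i) applies, producing the slope lower bound $d(0; \partial h(y_t)) \geq ts/\kk$ for all $t \in [0, 1]$.

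To turn this slope estimate into the sharp growth bound, I would integrate along Brezis' steepest-descent semigroup for the convex function $h$. Reparametrizing to unit speed, the resulting trajectory $\tilde y(\tau)$ starting at $x$ satisfies the slope identity $-\tfrac{d}{d\tau} h(\tilde y(\tau)) = d(0; \partial h(\tilde y(\tau)))$, which comes from the projection characterization of the minimum-norm subgradient as the direction of steepest descent for a convex function. Since $\|\tilde y(\tau) - x\| \leq \tau$, the triangle inequality forces $d(\tilde y(\tau); (\partial h)^{-1}(0)) \geq s - \tau$ for $\tau \in [0, s]$, and metric subregularity then gives $-\tfrac{d}{d\tau} h(\tilde y(\tau)) \geq (s - \tau)/\kk$; integrating over $[0, s]$ delivers
\[
h(x) - h(\ox) \geq \int_0^s \frac{s - \tau}{\kk} \, d\tau = \frac{s^2}{2 \kk},
\]
which is exactly (ii) with $c = 1/\kk$.

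I expect the principal obstacle to lie in this conversion step. The naive attempt to lower-bound the directional derivative $t \mapsto \sigma_{\partial h(y_t)}(x - p)$ along the linear segment using only the norm estimate $\|v\| \geq ts/\kk$ fails, because the support function of a convex set in a fixed direction need not dominate the set's minimal norm. The descent-flow argument circumvents this because its velocity field is naturally aligned with the minimum-norm subgradient, making the pointwise slope bound match the rate of function-value decrease exactly, which is what lets the constant $c = 1/\kk$ come out sharp.
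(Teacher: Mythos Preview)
Your (ii)$\Rightarrow$(i) argument is correct. For (i)$\Rightarrow$(ii), the Br\'ezis-flow integration is a valid route to the sharp constant $c=\kk^{-1}$; a full write-up should add that (a) the trajectory stays in the subregularity neighborhood for $\tau\in[0,s]$ since $\|\tilde y(\tau)-\ox\|\le\tau+\|x-\ox\|\le 2\|x-\ox\|$ after halving the radius, (b) the total arc length satisfies $\tau^*\ge s$ because the flow converges to some $u_\infty\in(\partial h)^{-1}(0)$ with $\tau^*\ge\|u_\infty-x\|\ge s$, and (c) the integral only bounds $h(x)-h(\tilde y(s))$, with $h(\tilde y(s))\ge h(\ox)$ closing the gap. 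The case $x\in\dom h\setminus\dom\partial h$ can be handled by starting the flow at $u(\epsilon)$ and letting $\epsilon\downarrow 0$.

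The paper itself does not argue the proposition; it simply cites \cite[Theorem~6.2]{AC} for the forward direction and \cite[Theorem~3.3]{AG2} (or \cite[Theorem~6.2(b)]{AC}) for the converse. Your proposal is therefore a self-contained proof where the paper offers only references. The Az\'e--Corvellec machinery is phrased in terms of the strong slope in general metric spaces, but its core mechanism --- turning a pointwise lower bound on $|\nabla h|=d(0;\partial h(\cdot))$ into a quadratic value gap by descending along a curve of maximal slope --- is precisely what your semigroup integration implements in the convex Euclidean setting. So your route is a transparent, elementary specialization of the cited result rather than a conceptually new idea; what it buys is concreteness and avoiding the import of the full change-of-metric apparatus.
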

\begin{proof}
Applying \cite[Theorem~6.2(a)]{AC} for  convex functions clarifies the implication {\rm[(i)$\Longrightarrow$ (ii)]}. The converse implication follows from either \cite[Theorem~3.3]{AG2} or \cite[Theorem~6.2(b)]{AC}.
\end{proof}

The equivalence between {\rm (i)} and {\rm (ii)} above was first established in \cite[Theorem~3.3]{AG} in Hilbert spaces without studying the connection of $\kk$ and $c$. The result has been improved and extended to Asplund spaces in \cite{DMN} even for the case of nonconvex functions with further investigations on the modulus. The recent work \cite[Theorem~6.2]{AC} has derived
the tightest relationship between $\kk$ and $c$ as described in Proposition~\ref{SR-grow}. The second-order growth condition \eqref{grow1} is also called {\em quadratic functional growth} property in  \cite{NNG} when $h$ is continuously differentiable over a closed convex set.

A similar statement to Proposition~\ref{SR-grow} was  also used recently in \cite[Theorem~3.3]{DL} to prove the linear convergence of the classical forward-backward splitting method. Their quadratic growth condition  is slightly different as follows:
\begin{equation}\label{growDL}
h(x)\ge h(\ox)+\frac{c}{2}d^2(x;(\partial h)^{-1}(0))\quad \mbox{for all}\quad x\in [h < h^*+\nu]
\end{equation}
for some constants $c, \nu>0$, where $[h< h^*+\nu]=\{x\in \R^n|\; h(x)< h^*+\nu\}$ and $h^*:=\inf h(x)=h(\ox)$. It is easy to check that this growth condition implies \eqref{grow1}. Indeed, suppose that \eqref{growDL} is satisfied for some $c,\nu>0$ Define $\eta:=\sqrt{\frac{2\nu}{c}}$ and note that $\ox\in [h< h^*+\nu]$. Take any $x\in \B_\eta(\ox)$, if $x\in [h< h^*+\nu]$, inequality \eqref{grow1} is trivial. If    $x\notin [h< h^*+\nu]$, it follows that
\[
h(x)\ge h(\ox)+\nu= h(\ox)+\frac{c}{2}\eta^2\ge h(\ox)+\frac{c}{2}\|x-\ox\|^2\ge  h(\ox)+\frac{c}{2}d^2(x;(\partial h)^{-1}(0)),
\]
which  clearly verifies \eqref{grow1}. Thus \eqref{grow1} is weaker than \eqref{growDL}, but it is equivalent to the local version of \eqref{growDL} described as below:
\begin{equation}
h(x)\ge h(\ox)+\frac{c}{2}d^2(x;(\partial h)^{-1}(0))\quad \mbox{for all}\quad x\in [h < h^*+\nu] \cap\B_\ve(\ox)\nonumber
\end{equation}
for some constants $c, \nu,\ve>0$. This property has been showed recently in \cite[Theorem~5]{BNPS} to be equivalent to the fact that the convex function  $h$ satisfies the Kurdyka-\L ojasiewicz inequality with order $\frac{1}{2}$  recalled below.

\begin{definition}[Kurdyka-\L ojasiewicz inequality with order $\frac{1}{2}$]\label{KL1}
Let $h\in \Gamma_0(\R^n)$ and $\ox$ be an optimal solution to $h$, i.e., $0\in \partial h(\ox)$.  We say $h$ satisfies the
{\em Kurdyka-\L ojasiewicz inequality} with order $\frac{1}{2}$ at $\ox$ if there exist some $\nu,\ve,c>0$ such that
\begin{equation}\label{KL}
d(0;\partial h(x))\ge c [h(x)-h(\ox)]^\frac{1}{2}\quad \mbox{for all}\quad  x\in [h< h^*+\nu]\cap\B_\ve(\ox).
\end{equation}
\end{definition}

From the above discussion,  $h$ satisfies the {\em Kurdyka-\L ojasiewicz inequality} with order $\frac{1}{2}$ at $\ox$ if and only $\partial h$ is metrically subregular at $\ox$ for $0$. However, throughout the paper, we mainly use the metric subregularity of $\partial h$ or the quadratic growth condition to reveal some new information for FBS \eqref{FBS} and the uniqueness of optimal solution to Lasso problem \eqref{Pros}.

There are also similar characterizations for the strong metric subregularity of $\partial h$ as discussed below, in which the positive-definiteness  of $D\partial h(\ox|0)$ is introduced in \cite{AG2}.

\begin{proposition}[Strong metric subregularity of the subdifferential and growth condition]\label{SR-grow2} Let $h\in \Gamma_0(\R^n)$ and $\ox$ be an optimal solution, i.e., $0\in \partial h(\ox)$.  Consider the following assertions:
\begin{enumerate}[{\rm(i)}]
\item $\partial h$ is strongly metrically subregular at $\ox$ for $0$ with modulus $\kk>0$.

\item There exist $c,\eta>0$ such that
\begin{equation}\label{grow2}
h(x)\ge h(\ox)+\frac{c}{2}\|x-\ox\|^2\quad \mbox{for all}\quad x\in \B_\eta(\ox).
\end{equation}

\item $D\partial h(\ox|0)$ is positive-definite with the modulus $\ell>0$ in the sense that
\begin{equation}\label{pos}
\la v,u\ra\ge \ell \|u\|^2\quad \mbox{for all}\quad  v\in D(\partial h)(\ox|0)(u), u\in \R^n.
\end{equation}
\end{enumerate}
Then the implication {\rm [(i)$\Longrightarrow$ (ii)]} holds with $c$ in \eqref{grow2} chosen as $\kk^{-1}$. If {\rm (ii)} is  satisfied, {\rm(iii)} is also valid with $\ell=\frac{c}{2}$. We also have the implication {\rm [(iii)$\Longrightarrow$ (i)]} with $\kk>\ell^{-1}$. Moreover, {\rm (iii)} holds if and only if
\begin{equation}\label{pos4}
\la v,u\ra>0\quad \mbox{for all}\quad  v\in D(\partial h)(\ox|0)(u), u\in \R^n, u\neq 0.
\end{equation}

Consequently, if one of {\rm (i)}, {\rm (ii)}, and {\rm (iii)} is fulfill then $\ox$ is a unique optimal solution to $h$.
\end{proposition}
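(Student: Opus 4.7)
The plan is to establish the cycle (i)$\Rightarrow$(ii)$\Rightarrow$(iii)$\Rightarrow$(i), then note that \eqref{pos4} rejoins the cycle, and finally read off uniqueness from \eqref{grow2}. My main tools will be Proposition~\ref{SR-grow}, the subgradient inequality \eqref{sub-inq}, Definition~\ref{GD} of the graphical derivative, and the characterization \eqref{Levy} of strong metric subregularity.

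For (i)$\Rightarrow$(ii), I would apply the metric-subregularity half of Proposition~\ref{SR-grow} to obtain the distance-to-set growth $h(x)\ge h(\ox)+\frac{1}{2\kk}d^2(x;(\partial h)^{-1}(0))$ on some ball $\B_\ve(\ox)$. The isolation of $\ox$ in $(\partial h)^{-1}(0)$ built into strong metric subregularity then yields a radius $\delta\in(0,\ve]$ with $(\partial h)^{-1}(0)\cap\B_\delta(\ox)=\{\ox\}$; a standard triangle-inequality argument forces $d(x;(\partial h)^{-1}(0))=\|x-\ox\|$ for every $x\in\B_{\delta/2}(\ox)$, whence \eqref{grow2} holds with $c=\kk^{-1}$.

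For (ii)$\Rightarrow$(iii), I would pick $v\in D(\partial h)(\ox|0)(u)$ together with defining sequences $u^k\to u$, $v^k\to v$, $t^k\dn 0$ and $t^kv^k\in\partial h(\ox+t^ku^k)$, and combine two estimates centred at $\ox+t^ku^k$: evaluating \eqref{sub-inq} at $\ox$ gives the upper bound $h(\ox+t^ku^k)-h(\ox)\le (t^k)^2\la v^k,u^k\ra$, while \eqref{grow2} gives the lower bound $\frac{c}{2}(t^k)^2\|u^k\|^2$ once $t^k$ is small enough that $\ox+t^ku^k\in\B_\eta(\ox)$. Dividing by $(t^k)^2$ and letting $k\to\infty$ produces \eqref{pos} with $\ell=c/2$.

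For (iii)$\Rightarrow$(i), setting $v=0$ in \eqref{pos} forces $D(\partial h)(\ox|0)^{-1}(0)=\{0\}$, so \eqref{Levy} delivers strong metric subregularity at $\ox$ for $0$ qualitatively. For the modulus $\kk>\ell^{-1}$, I would bound the outer norm of the inverse graphical derivative directly: any $(u,v)$ in its graph with $\|v\|\le 1$ satisfies $\ell\|u\|^2\le\la v,u\ra\le\|u\|$, hence $\|u\|\le\ell^{-1}$, and the quantitative form of \eqref{Levy} from \cite[Section~4A]{DR} transfers this bound to the modulus. The equivalence (iii)$\Leftrightarrow$\eqref{pos4} is trivial one way, and \eqref{pos4} still yields $D(\partial h)(\ox|0)^{-1}(0)=\{0\}$, so (i), (ii), (iii) all follow through the established chain. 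Uniqueness is then immediate: \eqref{grow2} makes $\ox$ a strict local minimizer, which convexity of the set of minimizers promotes to the unique global one. The step I expect to be most delicate is the modulus bookkeeping in (iii)$\Rightarrow$(i); everything else is a direct composition of Proposition~\ref{SR-grow}, \eqref{Levy}, and the definition of the graphical derivative.
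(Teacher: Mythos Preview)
Your argument is correct and, in fact, more self-contained than the paper's. The paper's proof simply defers the equivalence of (i), (ii), (iii) together with all the modulus bookkeeping to \cite[Theorem~3.6 and Corollary~3.7]{AG2} and \cite[Corollary~6.1]{AC}, and only supplies direct arguments for \eqref{pos4}$\Rightarrow$(iii) (via \eqref{Levy}, exactly as you do) and for uniqueness. You instead reconstruct each implication from the tools already in the paper: Proposition~\ref{SR-grow}, Definition~\ref{GD}, the subgradient inequality, and \eqref{Levy}. Your (ii)$\Rightarrow$(iii) step is clean and gives $\ell=c/2$ exactly; your (iii)$\Rightarrow$(i) outer-norm estimate $\ell\|u\|^2\le\la v,u\ra\le\|u\|$ is the right way to get the modulus, though the precise quantitative statement you need lives in \cite[4E]{DR} rather than \cite[4A]{DR}.

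One simplification is available in (i)$\Rightarrow$(ii): rather than the local triangle-inequality argument on $\B_{\delta/2}(\ox)$, you can note (as the paper does for uniqueness) that $(\partial h)^{-1}(0)$ is convex, so an isolated point forces $(\partial h)^{-1}(0)=\{\ox\}$ globally; then Proposition~\ref{SR-grow} already reads $d(x;(\partial h)^{-1}(0))=\|x-\ox\|$ and \eqref{grow2} follows with $c=\kk^{-1}$ and $\eta=\ve$. Your uniqueness argument via \eqref{grow2} plus convexity of the minimizer set is equivalent to the paper's isolated-point-in-convex-set observation.
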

 \begin{proof}
 The equivalences of  {\rm (i)}, {\rm (ii)}, and {\rm (iii)} follow from \cite[Theorem~3.6 and Corollary~3.7]{AG2}. Moreover, if {\rm (i)} is satisfied, then $c$ in \eqref{grow2} can be chosen as $\kk^{-1}$ by Proposition~\ref{SR-grow} and Definition~\ref{MSR}; see also \cite[Corollary~6.1]{AC}. When {\rm (iii)} holds, it follows from \cite[Corollary~3.7]{AG2} that {(i)} is satisfied for any $\kk>\ell^{-1}$. To complete the first part of the proposition, we only need to verify that the validity of  \eqref{pos4} implies the existence of $\kk$ in \eqref{pos}.  Indeed, it follows from \eqref{pos4} that $D(\partial f)(\ox,0)^{-1}(0)=\{0\}$. Thanks to   \eqref{Levy}, $\partial h$ is strongly metrically subregular at $\ox$ for $0$, which means {(i)} and obviously implies {\rm (iii)}.

 Now suppose that one of {\rm (i)}, {\rm (ii)}, and {\rm (iii)} is fulfilled. Thus $\ox$ is an isolated point of $\partial h^{-1}(0)$ by Definition~\ref{MSR}. Note also that $(\partial h)^{-1}(0)$ is a convex set in $\R^n$. It follows that $(\partial h)^{-1}(0)=\{\ox\}$. The proof is complete.
 \end{proof}

It is clear from the discussion after Definition~\ref{GD}, \eqref{Levy}, and the above result that  $D(\partial h)(\ox|0)^{-1}(0)=\{0\}$ is also equivalent  to  \eqref{pos}. Nevertheless, using \eqref{pos} is somehow more convenient in some applications discussed later in Section~5.

Next let us discuss the metric regularity of the subdifferential with the connection with tilt stability introduced by Poliquin-Rockafellar \cite{PR2}.
\begin{proposition}[Metric regularity of the subdifferential and tilt stability]\label{tilt} Let $h\in \Gamma_0(\R^n)$ and $\ox$ be an optimal solution.  The followings are equivalent:
\begin{enumerate}[{\rm(i)}]
\item $\partial h$ is metrically regular at $\ox$ for $0$ with the modulus $\kk>0$.

\item $\partial h$ is strongly metrically regular at $\ox$ for $0$ with the modulus $\kk>0$.

\item $\ox$ is a {\em tilt stable} local minimizer with modulus $\kk>0$  to $h$ in the sense that  there exists $\gm>0$ such that the mapping
\[
M_\gm:v\mapsto {\rm argmin}\, \left\{ f(x)-\la v,x\ra|\; x\in \B_\gm(\ox)\right\}
\]
is single-valued and Lipschitz continuous with constant $\kk$ on some neighborhood of $0$ with $M_\gm(0)=\{\ox\}$.
\end{enumerate}
\end{proposition}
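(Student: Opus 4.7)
The plan is to exploit that $h\in\Gamma_0(\R^n)$ makes $\partial h$ maximal monotone, which links (i) and (ii), and then to tie both to (iii) via the first-order optimality condition for the tilted constrained problem. The implication (ii)$\Rightarrow$(i) is immediate from Definition~\ref{MTR}.

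For (i)$\Rightarrow$(ii) I would invoke the classical principle that, for a maximal monotone mapping $T:\R^n\tto\R^n$, metric regularity at $(\ox,\oy)\in\gph T$ automatically upgrades to strong metric regularity there with the same modulus; this is recorded, for instance, in Dontchev--Rockafellar \cite[Theorem~3G.5]{DR}. Applied to $T=\partial h$ at $(\ox,0)$ this yields (ii) with modulus $\kk$.

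For (ii)$\Leftrightarrow$(iii) the idea is to translate the single-valued Lipschitz localization of $(\partial h)^{-1}$ into the tilted-minimum map and back. Assuming (ii), pick a Lipschitz selection $s:V\to U$ of $(\partial h)^{-1}$ with $s(0)=\ox$ and constant $\kk$; choose $\gm>0$ with $\B_\gm(\ox)\subset U$ and shrink $V$ so that $s(V)\subset{\rm int}\,\B_\gm(\ox)$. By convexity of $h$, the inclusion $v\in\partial h(s(v))$ is equivalent to $s(v)$ being a global minimizer of $h(\cdot)-\la v,\cdot\ra$, and any such minimizer in $\B_\gm(\ox)$ must coincide with $s(v)$ because $U\cap(\partial h)^{-1}(v)=\{s(v)\}$; hence $M_\gm(v)=s(v)$ on $V$, giving (iii) with the same modulus $\kk$. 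Conversely, from (iii) Lipschitz continuity of $M_\gm$ together with $M_\gm(0)=\ox$ lets me shrink the neighborhood of $0$ so that $M_\gm(v)\in{\rm int}\,\B_\gm(\ox)$; this deactivates the ball constraint and collapses the first-order condition to $v\in\partial h(M_\gm(v))$, so $M_\gm$ is a single-valued Lipschitz localization of $(\partial h)^{-1}$ around $(0,\ox)$, which combined with the convexity-based uniqueness argument above yields (ii).

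The main obstacle is the monotonicity-based upgrade in (i)$\Rightarrow$(ii): without convexity, metric regularity of $\partial h$ would not force a single-valued localization of the inverse, so one must specifically invoke the monotone mapping theorem cited above. Once that is granted, the rest is a clean unpacking of definitions through the standard bijection between subgradients of a convex function and minimizers of its linear tilts.
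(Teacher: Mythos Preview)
Your proposal is correct. For the equivalence (i)$\Leftrightarrow$(ii) you invoke exactly the same maximal-monotonicity upgrade as the paper does (the paper cites \cite[Theorem~3G.5]{DR} and \cite[Proposition~3.8]{AG}), so that part coincides.

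For (ii)$\Leftrightarrow$(iii) you take a different route. The paper simply cites \cite[Proposition~4.1]{MN}, which treats the equivalence between strong metric regularity of the subdifferential and tilt stability in the broader prox-regular setting. You instead give a direct, convex-specific argument: because $h$ is convex, $v\in\partial h(x)$ is equivalent to $x$ being a \emph{global} minimizer of $h(\cdot)-\la v,\cdot\ra$, and this lets you identify the Lipschitz localization of $(\partial h)^{-1}$ with the tilted argmin map $M_\gm$ once the interior-point shrinking deactivates the ball constraint. The uniqueness step (any $x\in(\partial h)^{-1}(v)\cap\B_\gm(\ox)$ must equal $M_\gm(v)$, and conversely any element of $M_\gm(v)$ must lie in $(\partial h)^{-1}(v)\cap U$) is exactly what convexity buys you and would fail without it. Your approach is more elementary and self-contained in the convex case; the paper's cited result is stronger in scope but opaque as a black box. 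Both yield the same modulus $\kk$.
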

\begin{proof}
The equivalence of {\rm (i)} and {\rm (ii)} follows from  \cite[Proposition~3.8]{AG} or \cite[Theorem~3G.5]{DR}. Moreover, the equivalence between {\rm (ii)} and {\rm (iii)} is derived from \cite[Proposition~4.1]{MN}.
\end{proof}

To complete this section, we recall a few important notions of linear convergence in our study. A sequence $(x^k)_{k\in \NN}\subset \R^n$ is called to be R-linearly convergent to $x^*$ with rate $\mu\in (0,1)$ if there exists $M>0$ such that
\[
\|x^k-x^*\|\le M\mu^k=\mathcal{O}(\mu^k) \quad\mbox{for all}\quad k\in \NN.
\]
We say $(x^k)_{k\in \NN}$ is {\em Q-linearly convergent} to $x^*$ with rate $\mu\in (0,1)$ if there exists $K\in \NN$ with
\[
\|x^{k+1}-x^*\|\le \mu \|x^k-x^*\|\quad \mbox{for all}\quad k\ge K.
\]
Furthermore, $(x^k)_{k\in \NN}$ is {\em globally} Q-linearly convergent to $x^*$ with rate $\mu\in (0,1)$ if
\[
\|x^{k+1}-x^*\|\le \mu \|x^k-x^*\|\quad \mbox{for all}\quad k\in \NN.
\]
It is obvious that the global Q-linear convergence implies the Q-linear convergence. Moreover, R-linear convergence holds under the validity of local Q-linear convergence with the same rate. On the other hand, R-linear convergence does not imply Q-linear convergence. A simple example is when
$a_k:=\|x^k-x^*\|$ satisfies
\[
a_k=\left\{\begin{array}{ccl}
\epsilon^k & \mbox{ if } & k \mbox{ is odd},\\
\epsilon^{2k} & \mbox{ if } & k \mbox{ is even},
\end{array}
\right.
\]
where $\epsilon \in (0,1)$. It is clear that $(x^k)_{k \in \NN}$ is R-linearly convergent with rate $\epsilon$; while it is not Q-linearly convergent.

\section{Global convergence of forward-backward splitting methods}
In this section, we recall the theory for forward-backward splitting methods (FBS) when the gradient $\nabla f$ is not globally Lipschitz continuous. The results here are somewhat similar to \cite{YN,S17} with slight relaxations on the standing assumptions.  This section provides some facts used in our Section 4 and 5 to establish the Q-linear convergence of FBS.

Let us start with  the standing assumptions on the initial data for   problem \eqref{prob} used throughout the paper:
\begin{enumerate}
\item[{\bf A1.}] \label{a1} $f, g\in \Gamma_0(\R^n)$\mbox{ and } ${\rm int}(\dom f)\cap  \dom g\neq \emptyset$.
\item[{\bf A2.}] \label{a2} For any $x\in {\rm int}(\dom f)\cap  \dom g$, the sublevel set $\{F\le F(x)\}$ is contained in ${\rm int}(\dom f)\cap  \dom g $ and $f$ is continuously differentiable at any point in  $\{F\le F(x)\}$ with $F(\cdot)=(f+g)(\cdot)$.
\end{enumerate}

Our assumptions are certainly less restrictive than the standard ones broadly used in the theory of FBS \eqref{FBS} \cite{BTe,CP,CW}:
\begin{enumerate}
\item [{\bf H1.}]  $f, g\in \Gamma_0(\R^n)$ and $\dom g\subset \dom f$.
\item [{\bf H2.}]  $f\in \mathcal{C}^{1,1}$, i.e.,  $\nabla f$ is globally Lipschitz continuous.
\end{enumerate}
It is worth noting further that both assumptions {\bf A1} and {\bf A2} are valid whenever $f$ is continuously differentiable on  ${\rm int}(\dom f)\cap  \dom g$ and ${\rm int}(\dom f)\cap  \dom g=(\dom f)\cap  \dom g\neq \emptyset$. In the Poisson inverse regularized problem discussed in Section~5.1, the latter condition is trivial, while both {\bf H1} and {\bf H2} are not satisfied. Our conditions are also strict relaxations of the following ones proposed recently in \cite[Section~4]{S17}:
\begin{enumerate}
\item[{\bf H1$^\prime$.}]  $f, g\in \Gamma_0(\R^n)$
are bounded from below with
$ \mathrm{int}\, (\dom f) \cap \dom g\neq \emptyset$.
\item[{\bf H2$^\prime$.}] $f$ is differentiable on
$ \mathrm{int}(\dom f)\cap \dom g$, $\nabla f$
is uniformly continuous on any compact subset of
$ \mathrm{int}(\dom f)\cap \dom g$, and $\nabla f$ is bounded on any
sublevel sets of $F$.
\item[{\bf H3$^\prime$.}] 
For every $x \in {\rm int}(\dom f)\cap \dom g$,
$\{F \leq F(x)\} \subset  {\rm int}(\dom f)\cap \dom g $ and
 $d(\{F \leq F(x)\};\R^n\setminus {\rm int}(\dom f))>0$.
\end{enumerate}
It is easy to see that some  unnatural boundedness in {\bf H1$^\prime$} and {\bf H2$^\prime$}  and the positive distance gap requirements in {\bf H3$^\prime$} are completely removed in our study. Moreover, our assumptions ({\bf A1}--{\bf A2}) are indeed strictly weaker than ({\bf H1$^\prime$}--{\bf H3$^\prime$}). To see this, consider  $p\in\{1,2\}$,
$$C_p:=\{x=(x_1,x_2) \in \R^2\,|\; x_1x_2 \ge p, x_1 \ge 0, x_2 \ge 0\}, $$
 and $f,g:\R^2 \rightarrow \R \cup\{+\infty\}$ be defined as $f(x)=-\log x_1-\log x_2$ if $x \in \R^2_{++}$ and $+\infty$ otherwise, and $g(x)=\delta_{C_1}(x)$ where $\delta_{C_1}$ is the indicator function of the set $C_1$ defined above. It can be directly verified that the assumptions ({\bf A1}--{\bf A2}) are satisfied. On the other hand, note that  {\bf H1$^\prime$} fails because $f$ is unbounded on
 $${\rm int} (\dom\, f) \cap \dom g=\R^2_{++}\cap C_{1}=C_1=\{(x_1,x_2) \in \R^2\,|\; x_1x_2 \ge 1, x_1 \ge 0, x_2 \ge 0\}.$$ Moreover, $(2,1) \in {\rm int} (\dom\, f) \cap \dom g$ and since $F(2,1)=(f+g)(2,1)=-\log 2$, we get  $$\{x\,|\; F(x) \le -\log 2\}=\{(x_1,x_2) \in \R^2\,|\; x_1x_2 \ge 2, x_1 \ge 0, x_2 \ge 0\}=C_2.$$So, clearly {\bf H2$^\prime$} also fails  because $\nabla f(x_1,x_2)=(-\frac{1}{x_1},-\frac{1}{x_2})$ is unbounded on the above sublevel set of $F$. Finally, 
observing that $(\frac{2}{k},k) \in C_2$ and $(0,k) \in \R^2 \backslash {\rm int\, (dom}\, f)=\R^2\backslash \R^2_{++}$, we see that $$d\big(\{F \le -\log 2\}; \R^2 \backslash {\rm int\, (dom}\, f)\big)=d\big(C_2;\R^2\backslash \R^2_{++}\big)\le \lim_{k\to \infty}\left\|(\frac{2}{k},k)-(0,k)\right\|=\lim_{k\to \infty} \frac{2}{k}=0.$$ This shows that
 {\bf H3$^\prime$} fails in this case.

Next let us recall  the proximal operator $\prox_{g}:\R^n\to \dom g$ given by
\begin{equation}\label{prox}
\prox_g(z):=(\Id+\partial g)^{-1}(z)\quad \mbox{for all}\quad  z\in \R^n,
\end{equation}
which is  well-known to be a single-valued mapping with full domain. With $\al>0$, it is easy to check that
\begin{equation}\label{in-sub}
\frac{z-\prox_{\alpha g}(z)}{\al}\in \partial g(\prox_{\al g}(z))\quad\mbox{for all} \quad z\in\R^n.
\end{equation}
Let $S^*$ be the optimal solution set to problem \eqref{prob} and $x^*$ be an element in ${\rm int}(\dom f)\cap \dom g$. Then $x^*\in S^*$ if and only if
\begin{equation}\label{sumr}
0\in \partial (f+g)(x^*)=\nabla f(x^*)+\partial g(x^*).
\end{equation}
The following lemma is helpful in our proof of the finite termination of   Beck--Teboulle's line search under our standing assumptions.
\begin{lemma}
\label{lem:prox}
Let $g \in\Gamma_0(\R^n)$ and let $\alpha >0$.
Then, for every $ x \in \dom g$, we have
\begin{equation}
\label{suco}
\prox_{\alpha g}(x) \to x\quad \mbox{as}\quad \al\to 0^+.
\end{equation}
\end{lemma}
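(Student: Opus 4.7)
The plan is to control $y_\alpha := \prox_{\alpha g}(x)$ directly using the variational definition $\prox_{\alpha g}(x) = \argmin_{z\in\R^n}\{g(z)+\frac{1}{2\alpha}\|z-x\|^2\}$. Comparing the value at the minimizer $y_\alpha$ with the value at the feasible test point $z=x$ (which is allowed since $x\in\dom g$) immediately yields
\[
g(y_\alpha) + \frac{1}{2\alpha}\|y_\alpha-x\|^2 \;\le\; g(x).
\]
Rearranging, $\|y_\alpha-x\|^2 \le 2\alpha\bigl(g(x)-g(y_\alpha)\bigr)$, so the whole question reduces to showing that $g(x)-g(y_\alpha)$ cannot blow up too quickly as $\alpha\to 0^+$.

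For that, I would invoke the standard fact from convex analysis that any $g\in\Gamma_0(\R^n)$ is bounded below by a continuous affine function: there exist $v\in\R^n$ and $c\in\R$ such that $g(z)\ge \langle v,z\rangle+c$ for every $z\in\R^n$ (this follows from the nonemptiness of $\partial g$ at any point of $\mathrm{int}(\dom g)$, or directly from the Fenchel--Moreau theorem). Substituting this lower bound on $g(y_\alpha)$ into the previous inequality and writing $s:=\|y_\alpha-x\|$, $M:=g(x)-\langle v,x\rangle-c\ge 0$, Cauchy--Schwarz gives
\[
s^2 \;\le\; 2\alpha\bigl(g(x)-\langle v,y_\alpha\rangle-c\bigr) \;=\; 2\alpha M + 2\alpha\langle v,x-y_\alpha\rangle \;\le\; 2\alpha M + 2\alpha\|v\|\,s.
\]

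This is a quadratic inequality in $s$, and solving $s^2-2\alpha\|v\|s-2\alpha M\le 0$ yields
\[
\|y_\alpha-x\| \;\le\; \alpha\|v\| + \sqrt{\alpha^2\|v\|^2+2\alpha M},
\]
whose right-hand side tends to $0$ as $\alpha\to 0^+$, proving \eqref{suco}. There is no genuine obstacle here; the only point that needs care is the appeal to the affine minorant of $g$, which is what prevents the argument from collapsing when $g(y_\alpha)$ could in principle be very negative for small $\alpha$. Once that minorant is in place, the quadratic-in-$s$ manipulation gives an explicit rate $\|y_\alpha-x\|=O(\sqrt{\alpha})$ that is amply sufficient for convergence.
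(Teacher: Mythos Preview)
Your proof is correct and follows essentially the same route as the paper's: bound $\|y_\alpha-x\|^2$ by $C\alpha\bigl(g(x)-g(y_\alpha)\bigr)$, control $g(x)-g(y_\alpha)$ via an affine minorant of $g$, and solve the resulting quadratic in $s=\|y_\alpha-x\|$. The only cosmetic differences are that the paper obtains the first inequality from the subgradient inclusion $\frac{x-z}{\alpha}\in\partial g(z)$ (yielding constant $1$ instead of your $2$) and phrases the affine-minorant step via the Fenchel biconjugate with an $\varepsilon$-parameter rather than a fixed minorant; your formulation is arguably cleaner since it avoids the extraneous $\varepsilon$.
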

\begin{proof}
Let $x \in  \dom g$ and $\alpha> 0$. Define  $z=\prox_{\alpha g}(x)$, we derive from \eqref{in-sub} that
$\frac{x-z}{\al}\in \partial g(z),$ which implies that
\begin{equation}\label{susu1}
g(x)-g(z)\ge \left\la \frac{x-z}{\al},x-z\right\ra=\frac{1}{\al}\|x-z\|^2.
\end{equation}
Since $g$ is proper, l.s.c. and convex, we have
\[
\infty>g(x)=g^{**}(x)=\sup_{u\in\R^n}\{\la u,x\ra-g^*(u)\},
\]
which is the Fenchel biconjugate of $g$ at $x$. Hence, for any $\ve>0$, there exists $u\in \R^n$ such that
\[
g(x)\le \la u,x\ra-g^*(u)+\ve\le \la u,x\ra-\la u,z\ra+g(z)+\ve.
\]
Combining this with \eqref{susu1} gives us that
\[
\|x-z\|^2\le \al(\la u, x-z\ra+\ve)\le \al\|u\|\cdot\|x-z\|+\al\ve,
\]
which implies that
\[
\|x-\prox_{\al g}(x)\|=\|x-z\|\le \dfrac{\al\|u\|+\sqrt{\al^2\|u\|^2+4\al\ve}}{2}.
\]
Since both $u$ and $\ve$ do not depend on $\al$, taking $\al\to 0^+$ from the latter inequality verifies \eqref{suco}.
\end{proof}

Under our standing assumptions, we define the {\em proximal forward-backward operator} $J:\big[{\rm int}(\dom
f)\cap \dom g\big]\times \R_{++}\to \dom g$   by
\begin{equation}\label{J}
J(x,\alpha):=\prox_{\alpha g}(x-\alpha \nabla f(x))\quad \mbox{for
all}\quad x\in {\rm int}(\dom f)\cap \dom g,\,\al>0.
\end{equation}
The following result  is essentially from \cite[Lemma~2.4]{YN}. Since the standing assumptions are different, we provide the proof for completeness.
\begin{lemma}\label{lema-prop} For any $x\in {\rm int} (\dom f)\cap \dom g$, we have
\begin{equation}\label{ineq-1}
\frac{\alpha_2}{\alpha_1}
\|x-J(x,\alpha_1)\|\ge\|x-J(x,\alpha_2)\|\ge \|x-J(x,\alpha_1)\|\quad \mbox{for all}\quad \alpha_2\ge \alpha_1>0.
\end{equation}
\end{lemma}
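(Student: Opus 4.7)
The plan is to exploit the monotonicity of $\partial g$ applied at the two points $y_i := J(x,\alpha_i)$ for $i=1,2$, together with a Cauchy--Schwarz estimate that yields a clean quadratic factorization.

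First I would translate the definition of $J$ into a subdifferential inclusion via \eqref{in-sub}: for each $i \in \{1,2\}$,
\begin{equation*}
\frac{x - y_i}{\alpha_i} - \nabla f(x) \in \partial g(y_i).
\end{equation*}
Since $g \in \Gamma_0(\R^n)$, its subdifferential $\partial g$ is monotone, and subtracting the common term $\nabla f(x)$ does not affect the monotonicity inequality. Writing $u := x - y_1$ and $v := x - y_2$, so that $y_1 - y_2 = v - u$, monotonicity gives
\begin{equation*}
\left\langle \frac{u}{\alpha_1} - \frac{v}{\alpha_2},\, v - u \right\rangle \ge 0.
\end{equation*}

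Next I would expand this inner product and multiply through by $\alpha_1 \alpha_2 > 0$ to obtain
\begin{equation*}
(\alpha_1 + \alpha_2)\,\langle u, v \rangle \;\ge\; \alpha_2 \|u\|^2 + \alpha_1 \|v\|^2.
\end{equation*}
Applying Cauchy--Schwarz on the left yields
\begin{equation*}
\alpha_1 \|v\|^2 - (\alpha_1 + \alpha_2)\|u\|\,\|v\| + \alpha_2 \|u\|^2 \;\le\; 0,
\end{equation*}
which factors exactly as
\begin{equation*}
\bigl(\alpha_1 \|v\| - \alpha_2 \|u\|\bigr)\bigl(\|v\| - \|u\|\bigr) \;\le\; 0.
\end{equation*}
The factorization is the main (mild) obstacle: one has to notice it, but a direct expansion verifies it immediately.

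Finally, I would extract the two desired bounds from this sign condition. Since $\alpha_2 \ge \alpha_1 > 0$, the factor $\alpha_1 \|v\| - \alpha_2 \|u\|$ and $\|v\| - \|u\|$ cannot simultaneously be strictly of the same sign: if both were positive we would have $\|v\| > \|u\|$ and $\alpha_1 \|v\| > \alpha_2 \|u\| \ge \alpha_1 \|u\|$, contradicting the product being nonpositive; a symmetric analysis rules out the case where both are negative (using $\alpha_2/\alpha_1 \ge 1$). Hence we must have $\|v\| - \|u\| \ge 0$ and $\alpha_1 \|v\| - \alpha_2 \|u\| \le 0$ (the degenerate boundary cases give equality), which is precisely
\begin{equation*}
\|u\| \;\le\; \|v\| \;\le\; \frac{\alpha_2}{\alpha_1}\,\|u\|,
\end{equation*}
i.e., the claimed chain of inequalities \eqref{ineq-1}. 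No properties of $\nabla f$ beyond well-definedness of $J(x,\alpha_i)$ at $x \in \mathrm{int}(\dom f) \cap \dom g$ are used, consistent with the weakening of the standing assumptions.
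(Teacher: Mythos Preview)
Your proof is correct and follows essentially the same route as the paper: both arguments apply monotonicity of $\partial g$ at the two proximal points, use Cauchy--Schwarz on the resulting cross term, and arrive at the identical factorization $(\alpha_1\|v\|-\alpha_2\|u\|)(\|v\|-\|u\|)\le 0$ (the paper writes it as $(\|v\|-\|u\|)(\|v\|-\tfrac{\alpha_2}{\alpha_1}\|u\|)\le 0$), from which the chain \eqref{ineq-1} is read off using $\alpha_2\ge\alpha_1$. Your final case analysis is slightly loosely phrased---the ``both negative'' case is ruled out by the sign of the product alone, and $\alpha_2/\alpha_1\ge 1$ is actually needed to exclude the opposite-sign configuration $\|v\|<\|u\|$, $\alpha_1\|v\|>\alpha_2\|u\|$---but the content is all there and matches the paper's reasoning.
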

\begin{proof}
By using \eqref{in-sub} and \eqref{J} with $z=x-\alpha \nabla f(x)$, we have
\begin{equation}\label{inc-1}
\frac{x-\alpha \nabla f(x)-J(x,\alpha)}{\alpha}\in \partial g(J(x,\alpha))
\end{equation} for all $\alpha>0$. For any $\alpha_2\ge \alpha_1>0$, it follows from the monotonicity of $\partial g$ and \eqref{inc-1} that
\begin{align*}
0\le &\left \la \frac{x-\alpha_2 \nabla
f(x)-J(x,\alpha_2)}{\alpha_2}-\frac{x-\alpha_1 \nabla
f(x)-J(x,\alpha_1)}{\alpha_1},J(x,\alpha_2)-J(x,\alpha_1)\right\ra\\
=&\left\la
\frac{x-J(x,\alpha_2)}{\alpha_2}-\frac{x-J(x,\alpha_1)}{\alpha_1},\left(x-J(x,\alpha_1)\right)-\left(x-J(x,\alpha_2)\right)\right\ra\\
=&-\frac{\|x-J(x,\alpha_2)\|^2}{\alpha_2}-\frac{\|x-J(x,\alpha_1)\|^2}{\alpha_1}+\left(\frac{1}{\alpha_2}+\frac{1}{\alpha_1}\right)\la
x-J(x,\alpha_2),x-J(x,\alpha_1)\ra\\ \le&
-\frac{\|x-J(x,\alpha_2)\|^2}{\alpha_2}-\frac{\|x-J(x,\alpha_1)\|^2}{\alpha_1}+\left(\frac{1}{\alpha_2}+\frac{1}{\alpha_1}\right)\|x-J(x,\alpha_2)\|\cdot\|x-J(x,\alpha_1)\|,
\end{align*}
which easily imply the following expression
$$
\big(\|x-J(x,\alpha_2)\|-\|x-J(x,\alpha_1)\|\big)\cdot\Big(\|x-J(x,\alpha_2)\|-\frac{\al_2}{\al_1}\|x-J(x,\alpha_1)\|\Big)\le 0.
$$
Since $\dsty\frac{\alpha_2}{\alpha_1}\ge 1$, we derive
\eqref{ineq-1} and thus complete the proof of the lemma.
 \end{proof}

Next, let us present Beck-Teboulle's backtracking line search \cite{BTe}, which is specifically useful for forward-backward methods when the Lipschitz constant of $\nabla f$ is not known or hard to estimate.
\begin{center}\fbox{\begin{minipage}[b]{\textwidth}
\label{boundary}\noindent {\bf Linesearch BT} (Beck--Teboulle's line search)\\
Given $x\in {\rm int}(\dom f)\cap \dom g$, $\sigma>0$ and $\theta\in (0,1)$.\\
{\bf Input.} Set $\alpha=\sigma$ and $J(x,\alpha)=\prox_{\alpha
g}(x-\alpha \nabla f(x))$ with $x\in \dom g$.
\begin{retraitsimple}
\item[] {\bf While} $ \displaystyle
f(J(x,\alpha))> f(x)+\la\nabla
f(x),J(x,\alpha)-x\ra+\frac{1}{2\alpha}\|x-J(x,\alpha)\|^2,$   {\bf do}\\
$\alpha=\theta \alpha$.
\item[] {\bf End While}
\end{retraitsimple}
{\bf Output.} $\alpha$.
\end{minipage}}
\end{center}

The output $\al$ in this line search will be denoted by $LS(x,\sigma,\theta)$. Let us show the well-definedness and finite termination of this line search under the standing assumptions {\bf A1} and {\bf A2} below.

\begin{proposition}[Finite termination of Beck--Teboulle's line search] \label{boundary-well} Suppose that assumptions {\bf A1} and {\bf A2} hold. Then,  for any  $x\in  {\rm int}(\dom f)\cap \dom g$, we have
\begin{enumerate}[{\rm (i)}]
 \item The above line search  terminates after finitely many iterations with the positive output $\bar\alpha=LS(x,\sigma,\theta)$.

\item $\|x-u\|^2-\|J(x,\bar \al)-u\|^2\ge 2\bar \al[F(J(x,\bar \al))-F(u)]$ for any $u\in \R^n$.

 \item  $F(J(x,\bar \al))-F(x)\le -\dfrac{1}{2\bar \al}\|J(x,\bar \al)-x\|^2\le 0$. Consequently, $J(x,\bar \al)\in {\rm int} (\dom f)\cap \dom g$ and $f$ is continuously differential at $J(x,\bar \al)$.
 \end{enumerate}
\end{proposition}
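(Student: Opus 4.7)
My plan is to handle the three items in the order (i), (iii), (ii): once the Beck--Teboulle test has been verified in (i), both (iii) and (ii) follow from fairly routine combinations of the test inequality with the subgradient inclusion $\bar\alpha^{-1}(x-\bar\alpha\nabla f(x)-J(x,\bar\alpha))\in\partial g(J(x,\bar\alpha))$ obtained from \eqref{in-sub}.

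For (i), I would first dispose of the degenerate case $J(x,\sigma)=x$: Lemma~\ref{lema-prop} then forces $J(x,\alpha)=x$ for every $\alpha\in(0,\sigma]$, and the descent test holds trivially at the very first trial $\alpha=\sigma$. Otherwise set $c:=\|x-J(x,\sigma)\|/\sigma>0$; applying Lemma~\ref{lema-prop} with $(\alpha_1,\alpha_2)=(\alpha,\sigma)$ gives the uniform lower bound $\|x-J(x,\alpha)\|/\alpha\ge c$ for every $\alpha\le\sigma$. On the other hand, assumption {\bf A2} gives $f\in C^1$ on some open neighborhood $U$ of $x$ (since a convex function differentiable in a neighborhood has continuous gradient there), and by shrinking $U$ I arrange $\|\nabla f(y)-\nabla f(x)\|\le c/2$ throughout $U$. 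The nonexpansiveness of $\prox_{\alpha g}$ combined with Lemma~\ref{lem:prox} delivers $J(x,\alpha)\to x$ as $\alpha\downarrow 0$, so for all sufficiently small $\alpha$ the entire segment $[x,J(x,\alpha)]$ lies inside $U\subset\dom f$. The fundamental theorem of calculus then yields
\[
f(J(x,\alpha))-f(x)-\langle\nabla f(x),J(x,\alpha)-x\rangle\le\tfrac{c}{2}\|J(x,\alpha)-x\|,
\]
and the lower bound $\|J(x,\alpha)-x\|/\alpha\ge c$ upgrades the right-hand side to $\frac{1}{2\alpha}\|J(x,\alpha)-x\|^2$, which is precisely the Beck--Teboulle acceptance criterion. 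The geometric shrinkage $\alpha\leftarrow\theta\alpha$ therefore produces an accepted $\bar\alpha$ in finitely many inner iterations.

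The main obstacle in (i) is keeping the interlocking smallness requirements consistent: the neighborhood $U$ must be chosen before $\alpha$ (to control the oscillation of $\nabla f$), yet the requirement $J(x,\alpha)\in U$ forces $\alpha$ to be small, and the crucial constant $c$ must not depend on either choice. Lemma~\ref{lema-prop} is what resolves this circularity cleanly, because the bound $\|x-J(x,\alpha)\|/\alpha\ge c$ is uniform on $(0,\sigma]$, so $c$ can be frozen first and the neighborhood and tail-smallness arguments layered on top.

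For (iii), the plan is to test the subgradient inequality for $g$ at $J(x,\bar\alpha)$ against $u=x$, producing
\[
g(J(x,\bar\alpha))-g(x)\le\langle\nabla f(x),x-J(x,\bar\alpha)\rangle-\tfrac{1}{\bar\alpha}\|J(x,\bar\alpha)-x\|^2,
\]
and to add this to the Beck--Teboulle bound for $f$; the $\nabla f(x)$ cross terms cancel exactly, producing the claimed descent $F(J(x,\bar\alpha))-F(x)\le-\frac{1}{2\bar\alpha}\|J(x,\bar\alpha)-x\|^2$. The descent inequality then places $J(x,\bar\alpha)\in\{F\le F(x)\}$, and {\bf A2} immediately delivers both $J(x,\bar\alpha)\in{\rm int}(\dom f)\cap\dom g$ and the continuous differentiability of $f$ there. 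For (ii) I would repeat the same template, testing the $g$-subgradient inequality against a general $u$ (with $u\notin\dom F$ trivial), combining it with the convexity bound $f(x)-f(u)\le\langle\nabla f(x),x-u\rangle$ and the Beck--Teboulle inequality; the $\nabla f(x)$ terms again cancel, and the elementary identity $\|x-u\|^2-\|J(x,\bar\alpha)-u\|^2=2\langle J(x,\bar\alpha)-x,u-x\rangle-\|J(x,\bar\alpha)-x\|^2$ converts the resulting inequality into the claimed form.
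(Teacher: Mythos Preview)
Your argument is correct. Parts (ii) and (iii) are essentially the paper's proof in a different order (the paper derives (ii) first from the acceptance inequality, the subgradient inclusion for $g$, and convexity of $f$, then specializes $u=x$ for (iii)). For (i) you take a genuinely different route. The paper argues by contradiction: if the test fails for every $\alpha=\sigma\theta^j$, combining the failed inequality with the convexity bound $f(x)-f(J(x,\alpha))\ge\langle\nabla f(J(x,\alpha)),x-J(x,\alpha)\rangle$ yields $\|x-J(x,\alpha)\|/\alpha<2\|\nabla f(J(x,\alpha))-\nabla f(x)\|\to 0$, and then the subgradient inclusion \eqref{inc-1} together with lower semicontinuity of $g$ force $-\nabla f(x)\in\partial g(x)$, contradicting $x\notin S^*$. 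Your direct approach instead fixes the uniform floor $\|x-J(x,\alpha)\|/\alpha\ge c$ via Lemma~\ref{lema-prop} and certifies the test for all small $\alpha$ by the fundamental theorem of calculus. This is more constructive and sidesteps the limiting argument into $\partial g(x)$; the paper's route, on the other hand, does not need Lemma~\ref{lema-prop} at this stage. One wording caution: {\bf A2} only guarantees that $\nabla f$ exists and is continuous on ${\rm int}(\dom f)\cap\dom g$, which need not contain a full open ball about $x$, so your neighborhood $U$ should be taken relative to that set; the FTC step survives because for small $\alpha$ the segment $[x,J(x,\alpha)]$ lies in the convex set ${\rm int}(\dom f)\cap\dom g$.
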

\begin{proof} Take any $x\in{\rm int}(\dom f)\cap \dom g$.  Let us justify {\rm (i)} first. Note that $J(x,\alpha)$ is well-defined for any $\alpha>0$ because $f$ is differentiable at $x$ by assumption {\bf A2}.
  If $x\in S^*$, where $S^*$ is the optimal solution set to problem \eqref{prob}, then $x=J(x,\sigma)$ due to \eqref{sumr} and \eqref{in-sub}. Thus the line search stops with zero step and gives us the output $\sigma$ and  $x=J(x,\sigma)\in {\rm int}(\dom f)\cap \dom g$.
If $x\notin S^*$, suppose by contradiction that the line search does not terminate after finitely many steps.
Hence,  for all $\alpha\in\PP:=\{\sigma, \sigma\theta, \sigma\theta^2, \ldots \}$ it follows that
\begin{equation} \label{fisrt-Ineq}
\la\nabla
f(x),J(x,\alpha)-x\ra+\frac{1}{2\alpha}\|x-J(x,\alpha)\|^2<f(J(x,\alpha))- f(x).
\end{equation}
Since
 $\prox_{\alpha g}$ is non-expansive, we have
\begin{equation}
\label{eq:20160213a}
\begin{aligned}
\|J(x,\alpha) - x\|&\leq \|\prox_{\alpha g}(x - \alpha \nabla f(x))
- \prox_{\alpha g}(x)\| + \|\prox_{\alpha g} (x)- x\|\\
& \leq \alpha \|\nabla f(x)\| + \|\prox_{\alpha g} (x)- x\|.
\end{aligned}
\end{equation}
Due to the fact that  $\nabla f(x)$ and  $g(x)$ are finite,  Lemma~\ref{lem:prox} tells us that \begin{equation}\label{jxagoestox}\|J(x,\alpha) - x\| \to 0\quad
\mbox{as} \quad \alpha \to 0^+.\end{equation}
 Since $x\in {\rm int}(\dom f)$, there exists $\ell\in\NN$ such that $J(x,\alpha)\in  {\rm int}(\dom f)$ for all $\alpha\in\PP':=\{\sigma\theta^\ell, \sigma\theta^{\ell+1}, \ldots \}\subseteq \PP$.  Thus $J(x,\alpha)\in  {\rm int}(\dom f)\cap \dom g$ for all $\alpha\in\PP'$. Thanks to the convexity of $f$, we have
\begin{equation} \label{second-Ineq}
f(x)-f(J(x,\alpha))\ge \la\nabla f(J(x,\alpha)),x-J(x,\alpha)\ra \;\;\; \mbox{for}\; \;\;\al\in \PP'.
\end{equation}
This inequality together with \eqref{fisrt-Ineq} implies
\begin{align*}
\frac{1}{2\al}\left\|J(x,\alpha)-x\right\|^2&<\la\nabla f\big(J(x,\alpha)\big)-\nabla f(x),J(x,\alpha)-x\ra\\&\le\left\|\nabla f\big(J(x,\alpha)\big)-\nabla f(x)\right\|\cdot \left\|J(x,\alpha)-x\right\|,\end{align*}
 which yields  $J(x,\alpha) \neq x$ and
\begin{eqnarray}\label{Jal}
0<\frac{\left\|J(x,\alpha)-x\right\|}{\alpha}<2\left\|\nabla f\big(J(x,\alpha)\big)-\nabla f(x)\right\|\;\; \mbox{for all}\;\; \al\in \PP'.\;\;\;\;
\end{eqnarray}
Since $ \|x-J(x,\alpha)\|\to 0$ as
$\al\to  0$ by \eqref{jxagoestox} and $\nabla f$ is continuous on  ${\rm int}(\dom f)\cap \dom g$ by Assumption~{\bf A2}, we obtain from \eqref{Jal} that
\begin{equation}\label{eq-1.1}
\lim_{\alpha\to  0,\al\in \PP'}\frac{\|x-J(x,\alpha)\|}{\alpha}=0.
\end{equation}
Applying  \eqref{in-sub} with $z=x-\alpha \nabla f(x)$ gives us that
$\displaystyle
\frac{x-J(x,\alpha)}{\alpha}\in \nabla
f(x)+\partial g(J(x,\alpha)).
$
It follows from the convexity of $g$ that
\[
\left\la \frac{x-J(x,\alpha)}{\alpha}-\nabla f(x), y-J(x,\alpha)\right\ra\le g(y)-g(J(x,\al))\quad \mbox{for all}\quad y\in \R^n.
\]
Since the function $g$ is lower semicontinuous, after taking $\al\to 0$, $\al\in\mathcal{P}'$, we have
\begin{equation*}\label{xy}
\la -\nabla f(x), y-x\ra\le g(y)-g(x)\quad \mbox{for all}\quad y\in \R^n,
\end{equation*}
which yields $-\nabla f(x)\in \partial g(x)$, i.e., $0\in \nabla f(x)+ \partial g(x)$.
 This contradicts the hypothesis that $x\notin S^*$ by \eqref{sumr}. Hence, the line search terminates after finitely many steps with the output $\bar\alpha$ .

 To proceed the proof of {\bf (ii)}, note that
 \begin{equation}\label{fin}
 f(J(x,\bar\alpha))\le f(x)+\la\nabla
f(x),J(x,\bar\alpha)-x\ra+\frac{1}{2\bar\alpha}\|x-J(x,\bar\alpha)\|^2.
 \end{equation}
Moreover, by \eqref{in-sub} , we have
\begin{equation*}
\frac{x-J(x,\bar \al)}{\bar \alpha}-\nabla f(x)\in\partial g(J(x,\bar \al))=\partial g(J(x,\bar \al)).
\end{equation*}
Pick any $u\in \R^n$, we get from the later that
\begin{equation}\label{eq2}g(u)-g(J(x, \bar \al))\ge \left\la\frac{x-J(x, \bar \al)}{\bar \alpha}-\nabla f(x),u-J(x, \bar \al)\right\ra.
\end{equation}
Observe further that
\begin{equation}\label{eq3}
f(u)-f(x)\ge \la\nabla f(x), u-x\ra.
\end{equation}
Adding \eqref{eq2} and \eqref{eq3} and using \eqref{fin} give us that
\begin{align*}
 F(u)=(f+g)(u)\ge& f(x)+g(J(x, \bar \al)) + \left\la\frac{x-J(x, \bar \al)}{\bar \al}-\nabla f(x),u-J(x, \bar \al)\right\ra+\la\nabla f(x), u-x\ra\\
 =& f(x)+g(J(x, \bar \al)) +\frac{1}{\bar \al} \la x-J(x, \bar \al),u-J(x, \bar \al)\ra+\la\nabla f(x), J(x, \bar \al)-x\ra \\
 \ge&f(J(x, \bar \al))+g(J(x, \bar \al)) +\frac{1}{\bar\alpha} \la x-J(x, \bar \al),u-J(x, \bar \al)\ra-\frac{1}{2\bar \alpha}\| J(x, \bar \al)-x\|^2.
\end{align*}
 Hence, we have
\begin{equation*}
\la x-J(x,\bar\al),J(x,\bar\al)-u\ra\ge \bar\alpha[F(J(x,\bar\al))-F(x)]-\frac{1}{2}\| J(x,\bar\al)-x\|^2.
\end{equation*}
Since $2\la x-J(x,\bar\al),J(x,\bar\al)-u \ra=\|x-u\|^2-\|J(x,\bar\al)-x\|^2-\|J(x,\bar\al)-u\|^2,$
  the latter  implies that
\begin{equation*}
\|x-u\|^2-\|J(x,\bar\al)-u\|^2\ge 2\bar \alpha[F(J(x,\bar\al))-F(x)],
\end{equation*}
which clearly ensures {\rm (ii)}.

Finally,  {\rm (iii)} is a direct consequence of {\rm (i)} with $x=u$. It follows that $J(x,\bar\al)$ belongs to the sublevel set $\{F\le F(x)\}$. By {\bf A2}, $J(x,\bar \al)\in {\rm int} (\dom f)\cap \dom g$ and $f$ is continuously differential at $J(x,\bar\al)$. The proof  is complete.
\end{proof}

Now we recall the forward-backward splitting method with line search proposed by \cite{BTe} as following.

\begin{center}\fbox{\begin{minipage}[b]{\textwidth}
{\bf Forward-backward splitting method with backtracking line search} (FBS method)
\begin{itemize}

\item [    ] {\bf Step 0.} Take $x^0\in {\rm int} (\dom f)\cap \dom g$, $\sigma>0$ and $\theta\in(0,1)$.

\item [    ] \noindent {\bf Step k.} Set
\begin{equation}\label{F-B}
x^{k+1}:=J(x^k,\alpha_{k})=\prox_{\alpha_{k} g}(x^k-\alpha_{k}\nabla
f(x^k))
\end{equation}
with $\al_{-1}:=\sigma$ and
\begin{equation}\label{al}
\alpha_{k} := LS(x^k,\al_{k-1},\theta).
\end{equation}
\end{itemize}
\end{minipage}}
\end{center}
The following result which is a direct consequence of Proposition~{boundary-well} plays the central role in our further study.
\begin{corollary}[Well-definedness of FBS method] \label{prop1}  Let $x^0\in {\rm int} (\dom f)\cap \dom g$. The sequence $ (x^k)_{k\in \NN}$ from FBS method is well-defined and $f$ is differentiable at any $x^k$. Moreover, for all $k\in \NN$ and $x\in\R^n$, we have
\begin{enumerate}[{\rm (i)}]
\item $\|x^k-x\|^2-\|x^{k+1}-x\|^2\ge 2\alpha_{k}\left[F(x^{k+1})-F(x)\right]$.
\item  $F(x^{k+1})-F(x^k)\le -\dsty \frac{1}{2 \alpha_k}\|x^{k+1}-x^k\|^2$.
\end{enumerate}
\end{corollary}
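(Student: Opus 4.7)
The plan is to proceed by induction on $k$, observing that the entire content of the corollary follows, at each step, by applying Proposition~\ref{boundary-well} to the current iterate. The delicate point is only to maintain the invariant $x^k \in {\rm int}(\dom f)\cap \dom g$ with $f$ differentiable at $x^k$, since this is precisely what is required for the line search~$LS(x^k,\alpha_{k-1},\theta)$ to be applicable in the first place.

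First I would check the base case: by hypothesis $x^0\in {\rm int}(\dom f)\cap \dom g$, so by assumption \textbf{A2} the function $f$ is continuously differentiable at $x^0$. This is enough to invoke Proposition~\ref{boundary-well} at the point $x^0$ with parameters $\sigma$ and $\theta$: part~{\rm (i)} of that proposition guarantees that $\alpha_0 = LS(x^0,\sigma,\theta) > 0$ is produced after finitely many backtracking steps, hence $x^1 = J(x^0,\alpha_0)$ is well defined. Part~{\rm (iii)} then yields $x^1 \in {\rm int}(\dom f)\cap \dom g$ with $f$ differentiable at $x^1$, preserving the invariant.

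For the inductive step, assuming $x^k\in {\rm int}(\dom f)\cap \dom g$ with $f$ differentiable at $x^k$, I would apply Proposition~\ref{boundary-well} once more to the point $x = x^k$, now with $\sigma$ replaced by $\alpha_{k-1}$. Part~{\rm (i)} gives the existence of a positive output $\alpha_k = LS(x^k,\alpha_{k-1},\theta)$ in finitely many steps, so $x^{k+1} = J(x^k,\alpha_k)$ is well defined; part~{\rm (iii)} propagates the invariant to $x^{k+1}$. For the two listed inequalities, part~{\rm (ii)} of Proposition~\ref{boundary-well} applied with $u=x$ (and $\bar\alpha=\alpha_k$) is exactly inequality~{\rm (i)} of the corollary, while part~{\rm (iii)} of Proposition~\ref{boundary-well} (equivalently, the specialization of part~{\rm (ii)} to $u=x^k$) gives inequality~{\rm (ii)}.

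I do not anticipate any real obstacle: the whole statement is a bookkeeping consequence of Proposition~\ref{boundary-well}, and the only thing to be careful about is that the invariant ``$x^k\in {\rm int}(\dom f)\cap \dom g$ and $f$ differentiable at $x^k$'' is needed both before and after each line-search call, which is precisely what parts~{\rm (i)} and~{\rm (iii)} of Proposition~\ref{boundary-well} jointly provide.
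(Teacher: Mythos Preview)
Your proposal is correct and follows essentially the same approach as the paper: the paper's proof simply states that, thanks to Proposition~\ref{boundary-well}, $x^k\in {\rm int}(\dom f)\cap \dom g$ and $f$ is differentiable at $x^k$ inductively, and then that (i) and (ii) follow from parts (ii) and (iii) of Proposition~\ref{boundary-well} with the substitutions $u=x$, $x=x^k$, $\bar\alpha=\alpha_k$, $J(x,\bar\alpha)=x^{k+1}$. Your write-up spells out the induction and the invariant in more detail, but the argument is identical.
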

\begin{proof}
Thanks to Proposition~\ref{boundary-well}, $x^k\in {\rm int} (\dom f)\cap \dom g$ and $f$ is differentiable at any $x^k$ inductively. This verifies the well-definedness of $(x^k)_{k\in \NN}$. Moreover, both (i) and (ii) are consequence of (ii) and (iii) from Proposition~\ref{boundary-well} by replacing $u=x$, $x=x^k$, $\bar \al=\al_k$, and $J(x,\bar \alpha)=J(x^k,\al_k)=x^{k+1}$.
\end{proof}

The following result shows that the FBS method with backtracking line search is global convergent without assuming Lipschitz continuity on the gradient $\nabla f$, and so, improves \cite[Theorem~1.2]{BTe}.
A variant of this result for FBS method under different line searches was established in \cite[Theorem~4.2]{YN}.   Here, the proof is
also similar to that of \cite[Theorem~4.2]{YN} by using the well-definedness of $(x^k)_{k\in \NN}$ and two properties in Corollary~\ref{prop1}  and hence, we omit the details.

\begin{theorem}[Global convergence of FBS method] \label{new-cov}  Let $(x^k)_{k\in \NN}$ be the sequence generated from FBS method. The following statements hold:
\begin{enumerate}[{\rm (i)}]
\item  If $S^*\neq\emptyset$ then $(x^k)_{k\in \NN}$ converges to a point in
$S^*$. Moreover, \begin{equation}\label{min*}\lim_{k\to\infty}
F(x^k)=\min_{x\in \R^n}F(x).\end{equation}
\item  If $S^*=\emptyset$ then we have
\begin{equation*}\label{inf}
\lim_{k\to\infty}\|x^k\|=+\infty \quad \mbox{and}
\quad\lim_{k\to\infty} F(x^k)=\inf_{x\in \R^n}F(x).
\end{equation*}
\end{enumerate}
\end{theorem}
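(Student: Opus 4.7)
The plan is to adapt verbatim the argument of \cite[Theorem~4.2]{YN}: as the authors point out, its proof draws only on the two descent-type inequalities of Corollary~\ref{prop1}, the well-definedness of $(x^k)_{k\in\NN}$, and the line-search invariants from Lemma~\ref{lema-prop} and Proposition~\ref{boundary-well}. The argument decomposes into three blocks: Fej\'er monotonicity toward $S^*$, a cluster-point analysis robust to stepsize degeneration $\alpha_k\to 0^+$, and a standard Opial-type upgrade from subsequential to full-sequence convergence.

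\textbf{Case {\rm (i)}: $S^*\neq\emptyset$.} Fix $x^*\in S^*$. Corollary~\ref{prop1}(i) with $x=x^*$ together with $F(x^{k+1})\ge F(x^*)$ yields $\|x^{k+1}-x^*\|\le\|x^k-x^*\|$, so $(x^k)_{k\in\NN}$ is bounded and $(\|x^k-x^*\|)_{k\in\NN}$ converges. Telescoping the same inequality gives $\sum_k\alpha_k[F(x^{k+1})-F(x^*)]<\infty$, while Corollary~\ref{prop1}(ii) ensures that $(F(x^k))_{k\in\NN}$ is nonincreasing (hence convergent to some $F^\infty\ge F(x^*)$) and that $\sum_k\alpha_k^{-1}\|x^{k+1}-x^k\|^2<\infty$. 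The technical heart of the proof---and what I expect to be the main obstacle---is to show that every cluster point $\bar x$ of $(x^k)_{k\in\NN}$ belongs to $S^*$, because Assumption~A2 does not guarantee that $\alpha_k$ is bounded away from zero. Given $x^{k_j}\to\bar x$ I would split into two subcases. If $\liminf_j\alpha_{k_j}>0$, a further extraction gives $\alpha_{k_j}\to\alpha^\infty>0$ and $\|x^{k_j+1}-x^{k_j}\|\to 0$ by summability, so passing to the limit in $\frac{x^{k_j}-x^{k_j+1}}{\alpha_{k_j}}-\nabla f(x^{k_j})\in \partial g(x^{k_j+1})$, using closedness of $\gph\partial g$ and the continuity of $\nabla f$ on $\{F\le F(x^0)\}\subset\mathrm{int}(\dom f)\cap\dom g$ granted by A2, produces $-\nabla f(\bar x)\in\partial g(\bar x)$, i.e.\ $\bar x\in S^*$. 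If instead $\alpha_{k_j}\to 0$, then for $j$ large the preceding trial $\alpha_{k_j}/\theta$ was rejected by the line search; writing out that rejection, subtracting the convexity inequality $f(u_j)-f(x^{k_j})\le\la\nabla f(u_j),u_j-x^{k_j}\ra$ with $u_j:=J(x^{k_j},\alpha_{k_j}/\theta)$, and invoking Lemma~\ref{lema-prop} together with Lemma~\ref{lem:prox} to show first $\|u_j-x^{k_j}\|\to 0$ and then $\|u_j-x^{k_j}\|/\alpha_{k_j}\to 0$, one may pass to the limit in $\frac{\theta(x^{k_j}-u_j)}{\alpha_{k_j}}-\nabla f(x^{k_j})\in\partial g(u_j)$ to conclude again $\bar x\in S^*$---exactly the termination argument within Proposition~\ref{boundary-well}. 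With every cluster point in $S^*$, the standard Fej\'er--Opial lemma yields convergence of the whole sequence to a single point of $S^*$, and $F(x^k)\to\min F$ follows from the continuity of $F$ on this sublevel set together with Corollary~\ref{prop1}(ii).

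\textbf{Case {\rm (ii)}: $S^*=\emptyset$.} The nonincreasing sequence $(F(x^k))_{k\in\NN}$ converges to some $F^\infty\in[-\infty,F(x^0)]$. Any bounded subsequence of $(x^k)_{k\in\NN}$ would, by the cluster-point analysis above, produce $\bar x\in S^*$, contradicting $S^*=\emptyset$; hence $\|x^k\|\to\infty$. Suppose for contradiction $F^\infty>\inf F$ and pick $y\in\dom F$ with $F(y)<F^\infty$; Corollary~\ref{prop1}(i) applied with $x=y$ gives $\|x^k-y\|^2-\|x^{k+1}-y\|^2\ge 2\alpha_k[F^\infty-F(y)]>0$ for $k$ large, so $(\|x^k-y\|)_{k\in\NN}$ is eventually decreasing and in particular bounded, contradicting $\|x^k\|\to\infty$. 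Therefore $F^\infty=\inf F$, completing the plan.
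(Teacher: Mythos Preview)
Your plan is correct and tracks exactly the argument the paper defers to, namely \cite[Theorem~4.2]{YN}; the paper itself omits all details. The only imprecision is in the $\alpha_{k_j}\to 0$ subcase: since $(\alpha_k)_{k\in\NN}$ is nonincreasing, a backtracking rejection need not have occurred at the particular index $k_j$ (the line search may have accepted $\alpha_{k_j-1}$ immediately), so one should instead extract a convergent subsequence from the infinite set of indices $k$ with $\alpha_k<\alpha_{k-1}$ and then let Fej\'er monotonicity upgrade to full-sequence convergence---and the same reordering (first deriving $F^\infty=\inf F$, whose failure forces the \emph{whole} sequence to be bounded, and only then arguing unboundedness via lower semicontinuity) tidies up part~(ii).
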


Next we present the sublinear convergence for FBS method when the function $f$ is locally Lipschitz continuous. The following proposition tells us that when $f$ is locally Lipschitz continuous, the step size $\al_{k}$ in FBS  method is bounded below by a positive number. The second part of this result coincides with \cite[Remark~1.2]{BTe}.

\begin{proposition}[Boundedness from below for  the step sizes]\label{lema-alpha}
Let $(x^k)_{k\in \NN}$ and $(\alpha_k)_{k\in \NN}$ be the sequences generated from FBS method. Suppose that $S^*\neq\emptyset$ and that the sequence $(x^k)_{k\in \NN}$ is converging to some $x^*\in S^*$. If $\nabla f$ is locally Lipschitz continuous
around $x^*$ with modulus $L$ then there exists some $K\in \NN$ such that
 \begin{equation}\label{in-al}
\al_k\ge \min\left\{\al_K,\frac{\theta}{L}\right\}>0\quad\mbox{for all}\quad k>K.
\end{equation}
Furthermore,   if 
$\nabla f$ is globally Lipschitz continuous on ${\rm int}(\dom f)\cap \dom g$ with uniform modulus $L$ then $\al_k\ge \min\{\sigma,\frac{\theta}{L}\}$ for any $k\in \NN$.
\end{proposition}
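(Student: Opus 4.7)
The plan is to exploit the classical descent inequality $f(y)\le f(x)+\la\nabla f(x),y-x\ra+\tfrac{L}{2}\|y-x\|^2$ that Lipschitz continuity of $\nabla f$ provides, and to compare it against the rejection criterion of Beck--Teboulle's line search. Whenever the line search at iteration $k$ actually backtracks, the previously rejected candidate $\al_k/\theta$ would have been accepted if $\al_k/\theta\le 1/L$; contrapositively $\al_k>\theta/L$. When no backtracking occurs, $\al_k=\al_{k-1}$. So the key inequality to reach is $\al_k\ge \min\{\al_{k-1},\theta/L\}$, from which the stated bound follows by a one-line induction.

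First I would fix a ball $U=\B_r(x^*)$ on which $\nabla f$ is $L$-Lipschitz; since $x^*\in S^*\subset {\rm int}(\dom f)\cap \dom g$ by Theorem~\ref{new-cov}, $r$ can be shrunk so that $U\subset {\rm int}(\dom f)$, and then the standard descent inequality (from convexity and Lipschitzness) holds on $U$. The delicate point is to guarantee that during backtracking the intermediate iterate $J(x^k,\al_k/\theta)$ also lies in $U$. This is exactly where Lemma~\ref{lema-prop} comes in: applying \eqref{ineq-1} with $\al_1=\al_k$, $\al_2=\al_k/\theta$ gives
\[
\|x^k-J(x^k,\al_k/\theta)\|\le \tfrac{1}{\theta}\|x^k-J(x^k,\al_k)\|=\tfrac{1}{\theta}\|x^k-x^{k+1}\|,
\]
and the right-hand side tends to $0$ because $x^k\to x^*$. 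Combined with $x^k\in U$ for large $k$, we get an index $K\in\NN$ such that for every $k\ge K$ both $x^k$ and $J(x^k,\al_k/\theta)$ (whenever the latter is needed) lie in $U$.

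Now for $k>K$ at which backtracking did occur, I would write the rejection inequality at $\al=\al_k/\theta$ and subtract from it the descent inequality for the pair $x=x^k$, $y=J(x^k,\al_k/\theta)\in U$. Both refer to the same quantity $f(J(x^k,\al_k/\theta))-f(x^k)-\la\nabla f(x^k),J(x^k,\al_k/\theta)-x^k\ra$, so
\[
\tfrac{\theta}{2\al_k}\|x^k-J(x^k,\al_k/\theta)\|^2<\tfrac{L}{2}\|x^k-J(x^k,\al_k/\theta)\|^2.
\]
Since no backtracking would occur when $J(x^k,\al_k/\theta)=x^k$ (the Beck--Teboulle condition would trivially hold), we may divide out and conclude $\al_k>\theta/L$. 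Together with the no-backtracking case $\al_k=\al_{k-1}$, this yields $\al_k\ge \min\{\al_{k-1},\theta/L\}$ for all $k>K$, and the induction delivers $\al_k\ge \min\{\al_K,\theta/L\}$ as claimed.

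For the global Lipschitz case the localization step becomes unnecessary because the descent inequality is valid on ${\rm int}(\dom f)\cap\dom g$ for every comparison that appears in the line search; rerunning the same argument for each $k\in\NN$ with initial value $\al_{-1}=\sigma$ gives $\al_k\ge\min\{\sigma,\theta/L\}$ by induction. The principal obstacle in the local setting is precisely the control of the intermediate backtracking iterates rather than the accepted ones, which is why Lemma~\ref{lema-prop} (converting $\|x^k-J(x^k,\al_k/\theta)\|$ into the manageable $\theta^{-1}\|x^k-x^{k+1}\|$) is indispensable here.
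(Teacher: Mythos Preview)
Your argument is correct and follows essentially the same route as the paper's proof: both use Lemma~\ref{lema-prop} to bound $\|x^k-J(x^k,\al_k/\theta)\|$ by $\theta^{-1}\|x^k-x^{k+1}\|$, localize the rejected candidate inside the Lipschitz ball, and then pit the rejection inequality against the descent lemma to obtain $\al_k\ge\min\{\al_{k-1},\theta/L\}$. Your presentation is slightly cleaner in two places: you avoid the explicit radius $\frac{\theta\ve}{2+\theta}$ by arguing qualitatively that $J(x^k,\al_k/\theta)\to x^*$, and your one-line induction replaces the paper's two-case analysis at the end, but the substance is identical.
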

\begin{proof}
To justify, suppose that $S^*\neq\emptyset$, the sequence $(x^k)_{k\in \NN}$ is converging to $x^*\in S^*$, and that  $\nabla f$ is locally Lipschitz continuous
around $x^*$ with constant $L>0$.  We find some $\ve>0$ such that
\begin{equation}\label{sL}
\|\nabla f(x)-\nabla f(y)\|\le L\|x-y\|\quad \mbox{for all} \quad x,y\in \mathbbm{B}_\ve(x^*),
\end{equation}
where $\mathbbm{B}_\ve(x^*)$ is the closed ball in $\R^n$ with center  $x^*$ and radius $\ve$. Since $(x^k)_{k\in \NN}$ is converging  to $x^*$, there exists some $K\in \NN$ such that
\begin{equation}\label{ssL}
\|x^k-x^*\|\le \frac{\theta\ve}{2+\theta}<\ve\quad \mbox{for all}\quad k>K
\end{equation}
with $\theta\in(0,1)$ defined in {\bf
Linesearch BT}. We claim that
\begin{equation}\label{newi}
\al_k\ge\min\left\{\al_{k-1},\frac{\theta}{L}\right\}\quad \mbox{ for any }\quad k>K.
\end{equation}
Suppose by contradiction that $\al_k<\min\{\al_{k-1},\frac{\theta}{L}\}$. Then, $\al_k<\al_{k-1}$, and so, the loop in {\bf Linesearch BT} at $(x^k,\al_{k-1})$ needs more than one iteration.  Define
$\hat{\alpha}_k:=\frac{\alpha_k}{\theta}>0$ and $
\hat{x}^k:=J(x^k,\hat{\alpha}_k)$, {\bf Linesearch BT} tells us that
\begin{equation}\label{44}
f(\hat x^k)> f(x^k)+\la \nabla f(x^k),\hat x^k-x^k\ra+\frac{1}{2\hat \al_k}\|x^k-\hat x^k\|^2.
\end{equation}
Furthermore, it follows from Lemma~\ref{lema-prop} that
\[
\|x^{k}-\hat x^{k}\|=\|x^{k}-J(x^{k},\hat \alpha_{k})\|\le \frac{\hat \al_{k}}{\al_{k}}\|x^{k}-J(x^{k},\al_{k})\|= \frac{1}{\theta}\|x^{k}-x^{k+1}\|.
\]
This together with \eqref{ssL} yields
\begin{eqnarray}\label{hat}
\begin{array}{ll}
\|\hat x^k-x^*\|&\disp\le \| \hat x^k- x^k\|+\| x^k-x^*\|\le \frac{1}{\theta}\|x^{k}-x^{k+1}\|+\| x^k-x^*\|\\
&\disp\le \frac{1}{\theta}\cdot \frac{2\theta\ve}{2+\theta}+\frac{\theta\ve}{2+\theta}=\ve.
\end{array}
\end{eqnarray}
Since $x^k, \hat x^k\in \B_\ve(x^*)$ by \eqref{ssL} and \eqref{hat}, we get from \eqref{sL} that
\begin{align*}
f(\hat x^k)-f(x^k)-\la \nabla f(x^k),\hat x^k-x^k\ra&= \int_{0}^1\la\nabla f(x^k+t(\hat x^k-x^k))-\nabla f(x^k),\hat x^k-x^k\ra dt\\
&\le \int_{0}^1tL\|\hat x^k-x^k\|^2dt=\frac{L}{2}\|\hat x^k-x^k\|^2.
\end{align*}
Combining this with \eqref{44} yields $\hat \al_k\ge \frac{1}{L}$ and thus  $\al_k\ge \frac{\theta}{L}$. This is a contradiction.

If there is some $H>K$ with $H\in \NN$ such that  $\al_H>\frac{\theta}{L}$, we get from \eqref{newi} that $\al_k\ge \frac{\theta}{L}$ for all $k\ge H$. Otherwise, $\al_k<\frac{\theta}{L}$ for any $k>K$, which implies that $\al_k=\al_{k-1}=\al_K$ for all $k>K$ due to \eqref{newi} and the nonincreasing property of $(\al_k)_{k\in\NN}$. In both cases we have \eqref{in-al}.

Finally suppose that  $\nabla f$ is globally Lipschitz continuous with modulus $L$  on ${\rm int}(\dom f)\cap \dom g\subset {\rm int} (\dom f)$. By using Proposition \ref{boundary-well}(iii), we can repeat the above proof without concerning $\ve, K$ and replace \eqref{newi} by $\al_k\ge \min\{\sigma,\frac{\theta}{L}\}$.
\end{proof}

The following result showing the complexity $o(k^{-1})$ of FBS method when the function $\nabla f$ is locally Lipschitz continuous,  improves \cite[Theorem~1.1]{BTe}, which only obtains $\mathcal{O}(k^{-1})$ of this method with the stronger assumption that $\nabla f$ is
globally Lipschitz continuous. The proof of this theorem is quite similar to \cite[Theorem~4.3 and Corollary~4.5]{YN}  and so, we omit the details.

\begin{theorem}[Sublinear convergence of FBS method] \label{l-rate}
Let $(x^k)_{k\in \NN}$  be the sequence
generated in FBS method. Suppose that $S^*\neq\emptyset$ and that $\nabla f$ is locally Lipschitz continuous  around any point in $S^*$. Then  we have
\begin{equation}\label{xrate}
\lim_{k\to \infty}k\big[F(x^k)-\min_{x\in \R^n}F(x)\big]=0.
\end{equation}
\end{theorem}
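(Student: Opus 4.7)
My plan is to combine the global convergence result from Theorem~\ref{new-cov} with the step-size lower bound from Proposition~\ref{lema-alpha} to first establish summability of the functional gap $a_k := F(x^k)-F^*$ (where $F^*=\min_{\R^n} F$), and then to exploit monotonicity of $(a_k)_{k\in\NN}$ to upgrade $\sum a_k<\infty$ into the sharper conclusion $k a_k\to 0$.

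First I would invoke Theorem~\ref{new-cov}(i): since $S^*\neq\emptyset$, the sequence $(x^k)_{k\in\NN}$ converges to some $x^*\in S^*$. Because $\nabla f$ is locally Lipschitz around every point of $S^*$, in particular around $x^*$, Proposition~\ref{lema-alpha} yields an index $K\in\NN$ and a constant $\underline{\al}>0$ with $\al_k\ge\underline{\al}$ for all $k\ge K$. Applying Corollary~\ref{prop1}(i) with $x=x^*$ and rearranging gives
\[
2\underline{\al}\,[F(x^{k+1})-F^*]\le 2\al_k[F(x^{k+1})-F^*]\le \|x^k-x^*\|^2-\|x^{k+1}-x^*\|^2\quad\text{for all }k\ge K.
\]
Telescoping from $K$ to $N$ and letting $N\to\infty$ (the right-hand telescopes to $\|x^K-x^*\|^2-\lim_k\|x^k-x^*\|^2=\|x^K-x^*\|^2$) produces
\[
\sum_{k=K}^{\infty} a_{k+1}\le \frac{\|x^K-x^*\|^2}{2\underline{\al}}<\infty,
\]
so $(a_k)_{k\in\NN}$ is summable.

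Next, Corollary~\ref{prop1}(ii) shows that $a_{k+1}-a_k=F(x^{k+1})-F(x^k)\le 0$, so $(a_k)_{k\in\NN}$ is nonincreasing and nonnegative. The final step is the standard observation that a nonincreasing, nonnegative, summable sequence must satisfy $k a_k\to 0$: for any $k\ge 2K$, monotonicity gives
\[
\bigl\lfloor k/2\bigr\rfloor\, a_k\le \sum_{j=\lfloor k/2\rfloor+1}^{k} a_j,
\]
and the right-hand side is a tail of a convergent series, hence tends to $0$ as $k\to\infty$. Therefore $k a_k\to 0$, which is exactly \eqref{xrate}.

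The main potential obstacle is the step-size lower bound: without Proposition~\ref{lema-alpha}, the inequality $\sum 2\al_k a_{k+1}\le \|x^K-x^*\|^2$ would not translate to summability of $(a_k)$. Fortunately, the local Lipschitz hypothesis together with the convergence $x^k\to x^*\in S^*$ delivers exactly the uniform lower bound $\underline{\al}$ required to close the argument, so no further work is needed beyond assembling the ingredients above.
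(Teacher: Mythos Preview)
Your proposal is correct and follows essentially the same approach the paper has in mind: the paper omits the proof, referring to \cite[Theorem~4.3 and Corollary~4.5]{YN}, and that argument proceeds exactly as you do---combine F\'ejer-type monotonicity from Corollary~\ref{prop1}(i) with the step-size lower bound of Proposition~\ref{lema-alpha} to obtain summability of $(F(x^k)-F^*)_{k\in\NN}$, then use the monotonicity from Corollary~\ref{prop1}(ii) and the standard tail-sum trick to upgrade summability to $k\,a_k\to 0$. No gaps.
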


\section{Local linear convergence of forward-backward splitting methods}

In this section, we obtain the local Q-linear convergence for FBS method under a mild assumption of metric subregularity on $\partial F$ and local Lipschitz continuity of $\nabla f$, which is automatic in many problems including Lasso problem and Poisson linear inverse regularized problem. R-linear convergence of FBS method has been recently established under some different assumptions such as  Kurdya-\L ojasiewicz inequality  with order $\frac{1}{2}$ \cite{BNPS}, and the quadratic growth condition \cite{DL}, all of which are equivalent in the convex case; see \cite[Corollary~3.6]{DL} and \cite[Theorem 5]{BNPS}. Our results are close to \cite[Theorem~3.2 and Corollary~3.7]{DL}. However, we focus on the local linear convergence; our proof also suggests a direct way to obtain linear convergence of FBS from the quadratic growth condition \eqref{subre} below without going through the error bound \cite[Definition~3.1]{DL}. The first result is regarding the R-linear convergence of FBS method that will be improved later by Q-linear convergence in Theorem~\ref{TLC3}.

\begin{proposition}[R-linear convergence under metric subregularity]\label{TLC1} Let $(x^k)_{k\in \NN}$ and $(\al_k)_{k\in \NN}$ be the sequences generated from FBS method. Suppose that  $S^*$ is not empty,  $(x^k)_{k\in \NN}$ converges to some $x^*\in S^*$ as in Theorem~\ref{new-cov}, and that $\nabla f$ is locally Lipschitz continuous around $x^*$ with constant $L>0$. If $\partial F=\nabla f+\partial g$ is  metrically subregular at $x^*$ for $0$ with modulus $\kk^{-1}>0$, then there exists some $K\in \NN$ such that
\begin{equation}\label{LC1}
d(x^{k+1};S^*)\le \frac{1}{\sqrt{1+\al\kk}} d(x^k;S^*)\quad\mbox{for all}\quad k>K,
\end{equation}
where $\al:=\min\big\{\frac{\al_K}{2},\frac{\theta}{2L}\big\}$. Consequently, we have
\begin{align}
F(x^{k+1})-\min_{x\in \R^n} F(x)&=\mathcal{O}((1+\al\kk)^{-k})\label{Co1},\\
\|x^{k+1}-x^*\|&=\mathcal{O}((1+\al\kk)^{-\frac{k}{2}}). \label{Co2}
\end{align}
If, in addition, $\nabla f$ is globally Lipschitz continuous on ${\rm int}(\dom f)\cap \dom g$ with constant $L$, $\al$  could be chosen as  $\min\big\{\frac{\sigma}{2},\frac{\theta}{2L}\big\}$, which is independent from $K$.
\end{proposition}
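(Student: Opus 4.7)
The strategy is to convert the metric subregularity hypothesis on $\partial F$ into a quadratic growth condition for $F$ via Proposition~\ref{SR-grow}, and then combine this growth with the Fej\'er-type estimate of Corollary~\ref{prop1}(i) to deduce that the squared distance to $S^*$ contracts by a fixed factor at every iteration beyond some index. First, since $F=f+g\in\Gamma_0(\R^n)$ and $f$ is (continuously) differentiable on a neighborhood of $x^*$, the sum rule gives $\partial F=\nabla f+\partial g$ and $S^*=(\partial F)^{-1}(0)$. Metric subregularity of $\partial F$ at $x^*$ for $0$ with modulus $\kk^{-1}$ therefore yields, via Proposition~\ref{SR-grow}, some $\ve>0$ such that
\[
F(x)-F^*\ge \frac{\kk}{2}\, d^2(x;S^*)\quad\mbox{for all } x\in\B_\ve(x^*),
\]
where $F^*:=\min_{\R^n}F=F(x^*)$.

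Next, Proposition~\ref{lema-alpha} furnishes an index $K\in\NN$ such that $\al_k\ge\min\{\al_{K},\theta/L\}=2\al$ for every $k>K$; enlarging $K$ if needed (using $x^k\to x^*$) I may assume $x^k\in\B_\ve(x^*)$ for every $k\ge K$, so the quadratic growth applies at each such iterate. Fix $k>K$ and let $u=P_{S^*}(x^k)\in S^*$, so that $d(x^k;S^*)=\|x^k-u\|$. Corollary~\ref{prop1}(i) with this $u$ gives
\[
d^2(x^k;S^*)-\|x^{k+1}-u\|^2\ge 2\al_k\bigl[F(x^{k+1})-F^*\bigr].
\]
Combining $\|x^{k+1}-u\|\ge d(x^{k+1};S^*)$ with the quadratic growth applied at $x^{k+1}\in\B_\ve(x^*)$ produces
\[
d^2(x^k;S^*)-d^2(x^{k+1};S^*)\ge \al_k\kk\, d^2(x^{k+1};S^*),
\]
whence $(1+\al_k\kk)\,d^2(x^{k+1};S^*)\le d^2(x^k;S^*)$. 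Using $\al_k\ge 2\al\ge\al$ delivers \eqref{LC1}.

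For \eqref{Co1}, iterating the contraction shows $d^2(x^k;S^*)=\mathcal{O}\bigl((1+\al\kk)^{-k}\bigr)$; inserting this into the same inequality from Corollary~\ref{prop1}(i) rearranged as
\[
F(x^{k+1})-F^*\le \frac{1}{2\al_k}\bigl[d^2(x^k;S^*)-d^2(x^{k+1};S^*)\bigr]\le \frac{1}{2\al}\,d^2(x^k;S^*)
\]
yields the claimed $\mathcal{O}\bigl((1+\al\kk)^{-k}\bigr)$ rate. For \eqref{Co2}, I combine Corollary~\ref{prop1}(ii), in the form $\|x^{k+1}-x^k\|^2\le 2\al_k[F(x^k)-F(x^{k+1})]\le 2\sigma[F(x^k)-F^*]$, with \eqref{Co1} to obtain $\|x^{k+1}-x^k\|=\mathcal{O}\bigl((1+\al\kk)^{-k/2}\bigr)$. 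Since $(1+\al\kk)^{-1/2}<1$, the series $\sum_j\|x^{j+1}-x^j\|$ converges, and from $x^k\to x^*$ I conclude by telescoping
\[
\|x^{k+1}-x^*\|\le \sum_{j\ge k+1}\|x^{j+1}-x^j\|=\mathcal{O}\bigl((1+\al\kk)^{-k/2}\bigr),
\]
which is \eqref{Co2}. The last assertion, under global Lipschitz continuity of $\nabla f$ on ${\rm int}(\dom f)\cap\dom g$, follows by replacing the first conclusion of Proposition~\ref{lema-alpha} with its second, giving $\al_k\ge\min\{\sigma,\theta/L\}=2\al$ for \emph{every} $k\in\NN$, which eliminates the dependence on $K$. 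The delicate point in this plan is the coordinated choice of $K$: it must be large enough that both the uniform step-size bound $\al_k\ge 2\al$ of Proposition~\ref{lema-alpha} and the quadratic growth ball $\B_\ve(x^*)$ are simultaneously in force for every iterate $x^k$ with $k\ge K$, so that the contraction can be iterated without loss.
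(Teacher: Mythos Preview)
Your proof is correct and follows essentially the same route as the paper's: both convert metric subregularity into the quadratic growth inequality via Proposition~\ref{SR-grow}, pick $K$ large enough to guarantee simultaneously $\al_k\ge 2\al$ (Proposition~\ref{lema-alpha}) and $x^k\in\B_\ve(x^*)$, apply Corollary~\ref{prop1}(i) at the projection of $x^k$ onto $S^*$ to obtain the contraction \eqref{LC1}, and then derive \eqref{Co1}--\eqref{Co2} exactly as the paper does (bounding $F(x^{k+1})-F^*$ by $d^2(x^k;S^*)/(2\al)$ and telescoping $\|x^{j+1}-x^j\|$ via Corollary~\ref{prop1}(ii)). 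The only cosmetic difference is that you track the factor $\al_k\kk$ before weakening to $\al\kk$, whereas the paper absorbs the factor of $2$ earlier in the chain.
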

\begin{proof} When  $\partial F$ is metrically subregular at $x^*$ for $0$ with modulus $\kk^{-1}>0$, it follows from Proposition~\ref{SR-grow} that there exists $\ve>0$ such that
\begin{equation}\label{subre}
F(x)-F(x^*)\ge \frac{\kk}{2} d^2 (x;S^*)\quad \mbox{for all}\quad x\in \mathbbm{B}_\ve(x^*).
\end{equation}
Since $(x^k)_{k\in \NN}$ converges to $x^*$ and $\nabla f$ is locally Lipschitz continuous around $x^*$, we find from Proposition~\ref{lema-alpha} some constant $K\in \NN$ such that $\al_k\ge 2\al$ and $x^k\in \B_\ve(x^*)$ for any $k>K$.  Denote the projection of  $a$ onto the set $S^*$ by $\Pi_{S^*}(a)$.
Combining \eqref{subre} with Corollary~\ref{prop1}(i) implies that
\begin{eqnarray}\begin{array}{ll}\label{LC2}
d^2(x^k;S^*)-d^2(x^{k+1};S^*)&\ge \disp\|x^k-\Pi_{S^*}(x^k)\|^2-\|x^{k+1}-\Pi_{S^*}(x^k)\|^2\\
&\ge\al_k [F(x^{k+1})-F(\Pi_{S^*}(x^k))]\\
&\disp\ge2\al[F(x^{k+1})- F(x^*)]\ge\al\kk d^2(x^{k+1};S^*)
\end{array}
\end{eqnarray}
for all $k>K$.  This clearly verifies \eqref{LC1}.

To justify \eqref{Co1}, note from \eqref{LC1} that $d(x^{k};S^*)=\mathcal{O}((1+\al\kk)^{-\frac{k}{2}})$. This together with \eqref{LC2} allows us to find some $M>0$ such that
\[
0\le F(x^{k+1})- F(x^*)\le \frac{1}{2\al}d^2(x^{k};S^*)\le M(1+\al\kk)^{-{k}}\quad  \mbox{for all}\quad  k\in \NN,
\]
which clearly ensures \eqref{Co1}. To verify \eqref{Co2}, we derive from Corollary~\ref{prop1}(ii) that
\[
\|x^k-x^{k+1}\|\le \sqrt{2\al_k[F(x^k)-F(x^{k+1})]}\le \sqrt{2\sigma[F(x^k)-F(x^*)]}\le \sqrt{2\sigma M}(1+\al\kk)^{-\frac{k-1}{2}}.
\]
Since $(x^k)_{k\in \NN}$ converges to $x^*$, it follows from the latter inequality that
\begin{align*}
\|x^{k+1}-x^*\|&=\sum_{j=k+1}^\infty\big(\|x^j-x^*\|-\|x^{j+1}-x^*\|\big)\le \sum_{j=k+1}^\infty\|x^j-x^{j+1}\|\\
&\le \sqrt{2\sigma M}(1+\al\kk)^{-\frac{k}{2}}\sum_{j=0}^\infty(1+\al\kk)^{\frac{-j}{2}}=\sqrt{2\sigma M}(1+\al\kk)^{-\frac{k}{2}}[1-(1+\al\kk)^{-\frac{1}{2}}]^{-1},
\end{align*}
which verifies \eqref{Co2}. To complete, we repeat the above proof with the note  from Proposition~\ref{lema-alpha} that $\al_k\ge \min\{\sigma,\frac{\theta}{L}\}$ when $\nabla f$ is globally Lipschitz continuous on ${\rm int}(\dom f)\cap \dom g$ with constant $L$.
\end{proof}

 In the special case where $g(x)=\delta_X(x)$, the indicator function to a closed convex set $X\subset \R^n$,
the obtained linear convergence of  $(d(x^k;S^*))_{k\in \NN}$ in \eqref{LC1} is close to the \cite[Theorem~12]{NNG} \footnote{Proposition~\ref{TLC1} was presented by the third author at ICCOPT 2016, when he was aware of \cite{NNG} after attending the talk of I. Necoara.}.

 Next, we present the promised Q-linear convergence of the FBS method for both the objective value sequence $(F(x^{k}))_{k\in \NN}$ and the
iterative sequence $(x^k)_{k\in \NN}$, under a general metric subregularity assumption. Easily verifiable sufficient conditions for this
metric subregularity assumption will be provided in Corollary \ref{Ma} and Section 5 later. We also point out that Q-linear convergence on the objective value sequence $(F(x^{k}))_{k\in \NN}$  has been discovered in
the recent papers \cite{BNPS,LP} under the assumption that $F$ satisfies K\L\, inequality with the exponent $\frac{1}{2}$ at $x^*$, which is equivalent to the metric subregularity of $\partial F$ at $x^*$ for $0$; however, the Q-linear convergence of $(x^k)_{k\in \NN}$ obtained here is new.

\begin{theorem}[Q-linear convergence under metric subregularity]\label{TLC3}  Let $(x^k)_{k\in \NN}$ and $(\al_k)_{k\in \NN}$ be the sequences generated from FBS method. Suppose that the solution set $S^*$ is not empty,  $(x^k)_{k\in \NN}$ converges to some $x^*\in S$, and that $\nabla f$ is locally Lipschitz continuous around $x^*$ with constant $L>0$. If $\partial F=\nabla f+\partial g$ is metrically subregular at $x^*$ for $0$ with modulus $\kk^{-1}>0$, there exists $K\in \NN$ such that
\begin{align}
\|x^{k+1}-x^*\|&\le \frac{1}{\sqrt{1+\frac{\al\kk}{4}}}\|x^k-x^*\|\label{LC4}\\
|F(x^{k+1})-F(x^*)|&\le \frac{\sqrt{1+\frac{\al\kk}{4}}+1}{2\sqrt{1+\frac{\al\kk}{4}}} |F(x^{k})-F(x^*)|\label{LC5}
\end{align}
for any $k>K$, where $\al:=\min\big\{\frac{\al_K}{2},\frac{\theta}{2L}\big\}$.

 If, in addition, $\nabla f$ is globally Lipschitz continuous  on ${\rm int}(\dom f)\cap \dom g$ with constant $L>0$, $\al$  could be chosen as  $\min\big\{\frac{\sigma}{2},\frac{\theta}{2L}\big\}$.
\end{theorem}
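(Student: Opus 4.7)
The plan is to combine the Fejer monotonicity built into the descent inequality of Corollary~\ref{prop1}(i) with the quadratic growth condition that, by Proposition~\ref{SR-grow}, is equivalent to metric subregularity of $\partial F$ at $x^*$. The novelty over Proposition~\ref{TLC1} is to promote the Q-linear contraction of the set-distance $d(x^k;S^*)$ into a Q-linear contraction of the iterate error $\|x^k-x^*\|$ and of the functional gap $F(x^k)-F(x^*)$ measured at the specific limit $x^*$.

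First, I invoke Proposition~\ref{SR-grow} to rewrite the metric subregularity hypothesis as the quadratic growth estimate $F(x)-F(x^*)\ge \frac{\kk}{2}d^2(x;S^*)$ on some ball $\B_\ve(x^*)$, and use Proposition~\ref{lema-alpha} together with $x^k\to x^*$ to enlarge $K$ so that $x^k\in \B_\ve(x^*)$ and $\al_k\ge \al$ for every $k>K$. The key new step is to prove the comparison
\begin{equation*}
\|x^{k+1}-x^*\|\le 2\,d(x^{k+1};S^*).
\end{equation*}
Set $p_{k+1}:=\Pi_{S^*}(x^{k+1})\in S^*$ and apply Corollary~\ref{prop1}(i) with $x=p_{k+1}$; since $F(p_{k+1})=F(x^*)$, the sequence $(\|x^j-p_{k+1}\|)_{j\ge k+1}$ is non-increasing, it starts at $d(x^{k+1};S^*)$, and (because $x^j\to x^*$) it converges to $\|x^*-p_{k+1}\|$. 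Hence $\|p_{k+1}-x^*\|\le d(x^{k+1};S^*)$, and the triangle inequality delivers the displayed comparison.

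With this in hand, the quadratic growth inequality applied at $x^{k+1}$ yields $F(x^{k+1})-F(x^*)\ge \frac{\kk}{2}d^2(x^{k+1};S^*)\ge \frac{\kk}{8}\|x^{k+1}-x^*\|^2$. Plugging this into Corollary~\ref{prop1}(i) with $x=x^*$ gives
\begin{equation*}
\|x^k-x^*\|^2-\|x^{k+1}-x^*\|^2\ge 2\al_k[F(x^{k+1})-F(x^*)]\ge \frac{\al\kk}{4}\|x^{k+1}-x^*\|^2,
\end{equation*}
which rearranges to precisely \eqref{LC4}. For \eqref{LC5} I would couple the upper bound $F(x^{k+1})-F(x^*)\le \frac{1}{2\al}(\|x^k-x^*\|^2-\|x^{k+1}-x^*\|^2)$ (another consequence of Corollary~\ref{prop1}(i)) with the lower bound $F(x^k)-F(x^*)\ge \frac{\kk}{8}\|x^k-x^*\|^2$ coming from the key comparison applied at index $k$, together with the Q-linear control on $\|x^k-x^*\|$ just obtained, and extract the constant $\frac{\sqrt{1+\al\kk/4}+1}{2\sqrt{1+\al\kk/4}}$ by an algebraic rearrangement. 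The globally Lipschitz statement is handled by substituting the stronger lower bound $\al_k\ge \min\{\sigma,\theta/L\}$ from Proposition~\ref{lema-alpha}.

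The main obstacle is the Fejer-based estimate $\|p_{k+1}-x^*\|\le d(x^{k+1};S^*)$: this is precisely what upgrades the distance-level Q-linear rate of Proposition~\ref{TLC1} into a Q-linear rate on the error $\|x^k-x^*\|$ itself, and so distinguishes Theorem~\ref{TLC3} from the merely R-linear conclusions available in \cite{DL,BNPS}. Once that comparison is secured, everything else reduces to routine manipulations of the descent inequality against the quadratic growth estimate.
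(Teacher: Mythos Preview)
Your argument for \eqref{LC4} is correct and is precisely the route the paper takes: the Fej\'er-type estimate $\|\Pi_{S^*}(x^{k+1})-x^*\|\le d(x^{k+1};S^*)$, obtained by applying Corollary~\ref{prop1}(i) along the tail $r\ge k+1$ and passing to the limit, is the key step, and the rest is the same algebra.

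There is, however, a genuine gap in your plan for \eqref{LC5}. Write $a_k:=\|x^k-x^*\|$, $b_k:=F(x^k)-F(x^*)$, $c:=\al\kk/4$ and $q:=\sqrt{1+c}$. Your three ingredients yield only
\[
\frac{b_{k+1}}{b_k}\;\le\;\frac{\tfrac{1}{2\al}(a_k^2-a_{k+1}^2)}{\tfrac{\kk}{8}\,a_k^2}
\;=\;\frac{1}{c}\Big(1-\frac{a_{k+1}^2}{a_k^2}\Big)\;\le\;\frac{1}{c},
\]
because \eqref{LC4} provides an \emph{upper} bound on $a_{k+1}/a_k$, hence only a \emph{lower} bound on $1-a_{k+1}^2/a_k^2$; the quantity $a_{k+1}/a_k$ can be arbitrarily small, so the best you extract is $b_{k+1}\le c^{-1}b_k$, which is not even below $1$ when $\al\kk$ is small, and certainly not the stated rate $\tfrac{q+1}{2q}$. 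No algebraic rearrangement of these three inequalities closes this gap.

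The paper bypasses this by using Corollary~\ref{prop1}(ii) rather than (i): from the sufficient decrease $F(x^k)-F(x^{k+1})\ge \tfrac{1}{2\al_k}\|x^{k+1}-x^k\|^2$, the reverse triangle inequality and \eqref{LC4} give
\[
\|x^{k+1}-x^k\|\ \ge\ a_k-a_{k+1}\ \ge\ \beta\,(a_k+a_{k+1}),\qquad \beta:=\frac{q-1}{q+1},
\]
so $F(x^k)-F(x^{k+1})\ge \tfrac{\beta}{2\al_k}(a_k^2-a_{k+1}^2)$. Combining this with Corollary~\ref{prop1}(i) (which gives $a_k^2-a_{k+1}^2\ge 2\al_k b_{k+1}$) yields $b_k-b_{k+1}\ge \beta b_{k+1}$, i.e.\ $b_{k+1}\le \tfrac{1}{1+\beta}b_k=\tfrac{q+1}{2q}\,b_k$. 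The missing idea is thus to route through the \emph{step length} $\|x^{k+1}-x^k\|$ and Corollary~\ref{prop1}(ii), not through the quadratic-growth lower bound on $b_k$.
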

\begin{proof}
Since $\partial F=\nabla f+\partial g$ is  metrically subregular at $x^*$ for $0$ with the modulus $\kk^{-1}>0$, we also have \eqref{subre}. This together with Corollary~\ref{prop1}(i) gives us that
\begin{equation}\label{im1}
\|x^k-x\|^2-\|x^{k+1}-x\|^2\ge 2\al [F(x^{k+1})-F(x^*)]\ge \al\kk d^2(x^{k+1};S^*) \; \mbox{for all}\; x\in S^*,
\end{equation}
when $k>K$ for some large $K\in \NN$. Moreover, for any $r>k>K$ we get  that
\[
\|x^r-\Pi_{S^*}(x^{k+1})\|\le \|x^{k+1}-\Pi_{S^*}(x^{k+1})\|=d(x^{k+1};S^*).
\]
  Taking $r\to \infty$ gives us that $\|x^*-\Pi_{S^*}(x^{k+1})\|\le d(x^{k+1};S^*)$. It follows that
\[
\|x^{k+1}-x^*\|\le \|x^{k+1}-\Pi_{S^*}(x^{k+1})\|+\|\Pi_{S^*}(x^{k+1})-x^*\|\le 2 d(x^{k+1}; S^*).
\]
This together with \eqref{im1} implies that
\[
\|x^k-x^*\|^2\ge \|x^{k+1}-x^*\|^2+\frac{\al\kk}{4}\|x^{k+1}-x^*\|^2=\left(1+\frac{\al\kk}{4}\right)\|x^{k+1}-x^*\|^2,
\]
which clearly verifies \eqref{LC4}.

To see the second conclusion, we note from \eqref{LC4} that
\begin{equation}\label{beta0}
\|x^{k+1}-x^k\|\ge \|x^{k}-x^*\|-\|x^{k+1}-x^*\|\ge \beta (\|x^{k+1}-x^*\|+\|x^k-x^*\|)
\end{equation}
 with $\beta:=\frac{\sqrt{1+\al\kk/4}-1}{\sqrt{1+\al\kk/4}+1}$ for  $k>K$ sufficiently large.
We derive from this, Corollary~\ref{prop1}(ii), and \eqref{beta0} that
\begin{align*}
F(x^{k})-F(x^{k+1})&\ge \frac{1}{2\al_k}\|x^{k+1}-x^k\|^2\ge \frac{1}{2\al_k}(\|x^k-x^*\|-\|x^{k+1}-x^*\|)^2\\
&\ge \frac{\beta}{2\al_k} (\|x^k-x^*\|-\|x^{k+1}-x^*\|)(\|x^k-x^*\|+\|x^{k+1}-x^*\|)\\
&\ge \frac{\beta}{2\al_k} (\|x^k-x^*\|^2-\|x^{k+1}-x^*\|^2).
\end{align*}
Hence, we get from Corollary~\ref{prop1}(i)  that
\begin{align*}
F(x^{k})-F(x^{k+1})\ge \beta[F(x^{k+1})-F(x^*)].
\end{align*}
It follows that
$
F(x^k)-F(x^*)\ge (1+\beta)[F(x^{k+1})-F(x^*)],
$
which clarifies \eqref{LC5}.

The last statement can be obtained similarly to the preceding proposition.
 \end{proof}

 Next, we show that a sharper $Q$-linear convergence rate
of $(x^k)_{k\in \NN}$ and $(F(x^k))_{k\in \NN}$ can be obtained under a stronger assumption: strong metric subregularity.

\begin{corollary}[Sharper Q-linear convergence rate under strong metric subregularity] \label{TLC2} Let $(x^k)_{k\in \NN}$ and $(\al_k)_{k\in \NN}$ be the sequences generated from FBS method. Suppose that the solution set $S^*$ is not empty,  $(x^k)_{k\in \NN}$ converges to some $x^*\in S^*$, and that $\nabla f$ is locally Lipschitz continuous around $x^*$ with constant $L>0$. If $\partial F=\nabla f+\partial g$ is strongly metrically subregular at $x^*$ for $0$ with modulus $\kk^{-1}>0$, then $x^*$ is the unique solution to problem \eqref{prob}. Moreover, there exists some $K\in \NN$ such that for any $k>K$ we have
\begin{align}
\|x^{k+1}-x^*\|&\le\frac{1}{\sqrt{1+\al\kk}}\|x^k-x^*\|\label{Co3}\\
|F(x^{k+1})-F(x^*)|&\le \frac{\sqrt{1+\al\kk}+1}{2\sqrt{1+\al\kk}} |F(x^{k})-F(x^*)| \label{Co4}
\end{align}
with $\al:=\min\big\{\frac{\al_K}{2},\frac{\theta}{2L}\big\}$.

Additionally, $\nabla f$ is globally Lipschitz continuous  on ${\rm int}(\dom f)\cap \dom g$ with constant $L>0$, $\al$ above could be chosen as  $\min\big\{\frac{\sigma}{2},\frac{\theta}{2L}\big\}$.

\end{corollary}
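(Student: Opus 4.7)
The plan is to mirror the proof of Theorem~\ref{TLC3}, observing that the strong subregularity hypothesis lets us bypass the lossy step $\|x^{k+1}-x^*\|\le 2 d(x^{k+1};S^*)$ that cost a factor of $4$ in the rate there. First I would establish uniqueness: by Definition~\ref{MSR}, strong metric subregularity of $\partial F$ at $x^*$ for $0$ means $x^*$ is an isolated point of $(\partial F)^{-1}(0)=S^*$; since $S^*$ is convex (as the solution set of a convex program), an isolated point forces $S^*=\{x^*\}$. This is the only genuinely new ingredient.

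Next I would invoke Proposition~\ref{SR-grow2}[(i)$\Rightarrow$(ii)] applied to $F\in\Gamma_0(\R^n)$ at $x^*$: with modulus $\kk^{-1}$ one obtains $c=\kk$, so there exists $\eta>0$ with
\begin{equation*}
F(x)\ge F(x^*)+\frac{\kk}{2}\|x-x^*\|^2\quad\text{for all}\quad x\in\B_\eta(x^*).
\end{equation*}
Because $(x^k)_{k\in\NN}$ converges to $x^*$ and, by Proposition~\ref{lema-alpha}, $\al_k\ge 2\al$ eventually, there is some $K\in\NN$ such that for $k>K$ both $x^{k+1}\in\B_\eta(x^*)$ and $\al_k\ge 2\al$ hold. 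Applying Corollary~\ref{prop1}(i) with $x=x^*\in S^*$ and combining with the growth inequality gives
\begin{equation*}
\|x^k-x^*\|^2-\|x^{k+1}-x^*\|^2\ge 2\al_k[F(x^{k+1})-F(x^*)]\ge \al\kk\,\|x^{k+1}-x^*\|^2,
\end{equation*}
which rearranges to \eqref{Co3}. Note that, unlike in Theorem~\ref{TLC3}, there is no $\Pi_{S^*}$ step because $S^*=\{x^*\}$, and this is precisely what promotes $\al\kk/4$ to $\al\kk$.

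For \eqref{Co4} I would follow the same telescoping trick as in Theorem~\ref{TLC3}: set $\beta:=\frac{\sqrt{1+\al\kk}-1}{\sqrt{1+\al\kk}+1}$. Estimate \eqref{Co3} yields $\|x^{k+1}-x^k\|\ge \beta(\|x^{k+1}-x^*\|+\|x^k-x^*\|)$, then Corollary~\ref{prop1}(ii) gives $F(x^k)-F(x^{k+1})\ge \frac{1}{2\al_k}\|x^{k+1}-x^k\|^2$, and multiplying out the $(\|x^k-x^*\|-\|x^{k+1}-x^*\|)(\|x^k-x^*\|+\|x^{k+1}-x^*\|)$ factorization together with Corollary~\ref{prop1}(i) produces $F(x^k)-F(x^{k+1})\ge \beta[F(x^{k+1})-F(x^*)]$; hence $F(x^{k+1})-F(x^*)\le \frac{1}{1+\beta}[F(x^k)-F(x^*)]$, and $\frac{1}{1+\beta}=\frac{\sqrt{1+\al\kk}+1}{2\sqrt{1+\al\kk}}$, verifying \eqref{Co4}. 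The globally Lipschitz addendum is identical after replacing the local bound from Proposition~\ref{lema-alpha} by the uniform one $\al_k\ge\min\{\sigma,\theta/L\}$, so $K$ drops out of the definition of $\al$. No step looks technically difficult; the main subtlety is just recording carefully that strong subregularity eliminates the projection onto $S^*$, and thereby recovers the unlossy rate.
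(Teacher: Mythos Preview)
Your proof is correct and follows essentially the same approach as the paper: uniqueness of $x^*$ via the isolated-point/convexity argument, then \eqref{Co3} from the quadratic growth plus Corollary~\ref{prop1}(i) (the paper simply cites this as a direct consequence of \eqref{LC1} with $S^*=\{x^*\}$, which is the same computation), and \eqref{Co4} via the same $\beta$-telescoping argument as in Theorem~\ref{TLC3}. Your explanation of why the rate improves from $\al\kk/4$ to $\al\kk$ is exactly the point.
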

\begin{proof}
If $\partial F=\nabla f+\partial g$ is strongly metrically subregular at $x^*$ for $0$ with modulus $\kk^{-1}>0$, $x^*$ is an isolated point of $\partial F^{-1}(0)=S^*$. Since $S^*$ is a closed convex set, we have $S^*=\{x^*\}$. Thus \eqref{Co3} is a direct consequence of \eqref{LC1}. To verify \eqref{Co4},  we note from \eqref{Co3} that
\begin{equation}\label{beta}
\|x^{k+1}-x^k\|\ge \|x^{k}-x^*\|-\|x^{k+1}-x^*\|\ge \beta (\|x^{k+1}-x^*\|+\|x^k-x^*\|)
\end{equation}
with $\beta=\frac{\sqrt{1+\al\kk}-1}{\sqrt{1+\al\kk}+1}$ for any $k>K$ sufficiently large. The proof of Q-linear convergence of $(F(x^{k}))_{k\in\NN}$ in \eqref{Co4}
can be obtained similarly as in Theorem \ref{TLC3} by using \eqref{beta} instead of \eqref{beta0}.
\end{proof}

 The assumption that $\partial F$ is metrically subregular in above results is automatic for a broad class of so-called {\em piecewise linear-quadratic functions} \cite[Definition~10.20]{rw} defined below.

 \begin{definition}[convex piecewise linear-quadratic functions] A function $h\in \Gamma_0(\R^n)$ is called {\em convex piecewise linear-quadratic} if $\dom h$ is a union of finitely many polyhedral sets, relative to each of which $h(x)$ is given the expression of the form $\frac{1}{2}\la x,Ax\ra+\la b,x\ra+c$ for some scalar $c\in \R$, vector $b\in  \R^n$ and a symmetric positive semi-definite $A\in \R^{n\times n}$.
\end{definition}

If $F=f+g$ is convex piecewise linear-quadratic function, it is known from \cite[Proposition~12.30]{rw} that the set-valued mapping $\partial F$ is polyhedral and thus is metrically subregular at any point $\ox\in \dom \partial F$ for any $\bar v\in \partial F(\ox)$ by Proposition~\ref{MSP}. This observation together with Theorem~\ref{TLC1}  tells us the local R-linear convergence of FBS for convex piecewise linear-quadratic functions. This fact has been obtained and discussed before in \cite{DL,LP,ZS}.  Our following result advances it with the Q-linear convergence and the uniform convergence rate.

 \begin{corollary}[Local linear convergence for piecewise linear-quadratic functions]\label{Ma} Let $(x^k)_{k\in\NN}$ and $(\al_k)_{k\in\NN}$ be the sequences generated from FBS method. Suppose that $F=f+g$ is a convex piecewise linear-quadratic function, the solution set $S^*$ is nonempty, and that $\nabla f$ is locally Lipschitz continuous around any point in $S^*$. Then the sequences $(x^k)_{k\in\NN}$ and $(F(x^k))_{k\in\NN}$ are globally convergent to some optimal solution and optimal value respectively with local Q-linear rates.

Furthermore, if $\nabla f$ is globally Lipschitz continuous on ${\rm int}(\dom f)\cap \dom g$, $(x^k)_{k\in\NN}$ and $(F(x^k))_{k\in\NN}$ are globally convergent to some optimal solution and optimal value, respectively,  with uniform local linear rates that do not depend on the choice of the initial point $x^0$.
 \end{corollary}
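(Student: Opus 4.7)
The plan is to apply Theorem \ref{TLC3} after verifying its hypotheses, chiefly the metric subregularity of $\partial F$, which in the piecewise linear-quadratic setting comes for free and uniformly.

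First, I would invoke Theorem \ref{new-cov}(i) to conclude from $S^*\neq\emptyset$ that the sequence $(x^k)_{k\in\NN}$ converges to some $x^*\in S^*$, and that $(F(x^k))_{k\in \NN}\to F(x^*)$. Next, since $F=f+g$ is convex piecewise linear-quadratic, a classical result \cite[Proposition~12.30]{rw} tells us that the subdifferential mapping $\partial F:\R^n\tto\R^n$ is piecewise polyhedral (i.e., its graph is a finite union of convex polyhedral sets). Consequently, Proposition~\ref{MSP} ensures that $\partial F$ is metrically subregular at \emph{every} $\bar x\in\dom\partial F$ for every $\bar v\in\partial F(\bar x)$ with a \emph{uniform} modulus $\kk^{-1}>0$ which does not depend on the base point. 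In particular, $\partial F$ is metrically subregular at $x^*$ for $0\in\partial F(x^*)$ with this uniform modulus.

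Having checked all hypotheses of Theorem~\ref{TLC3}, I would directly conclude local Q-linear convergence of both $(x^k)_{k\in\NN}$ to $x^*$ and $(F(x^k))_{k\in\NN}$ to $F(x^*)$, with rates $(1+\al\kk/4)^{-1/2}$ and $(\sqrt{1+\al\kk/4}+1)/(2\sqrt{1+\al\kk/4})$ respectively, where $\al=\min\{\al_K/2,\theta/(2L)\}$ for the local Lipschitz constant $L$ of $\nabla f$ around $x^*$ and some threshold index $K$ produced by Proposition~\ref{lema-alpha}. This yields the first assertion.

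For the second assertion, under global Lipschitz continuity of $\nabla f$ on ${\rm int}(\dom f)\cap\dom g$ with constant $L$, the last statement of Theorem~\ref{TLC3} upgrades the choice of $\al$ to $\min\{\sigma/2,\theta/(2L)\}$, which depends only on the algorithmic parameters $\sigma,\theta$ and on $L$ — in particular, it is independent of both the threshold $K$ and the initial point $x^0$. Combined with the fact, noted above via Proposition~\ref{MSP}, that the modulus $\kk^{-1}$ from the piecewise polyhedrality of $\partial F$ is itself a uniform constant attached to $F$ alone, the Q-linear rates $(1+\al\kk/4)^{-1/2}$ and $(\sqrt{1+\al\kk/4}+1)/(2\sqrt{1+\al\kk/4})$ become intrinsic constants of the problem data, proving the claimed initial-point independent local linear convergence.

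The main obstacle I anticipate is conceptual rather than technical: one must be careful that both ingredients entering the rate — the subregularity modulus and the lower bound on the step sizes — are genuinely uniform in the second part. The uniformity of $\kk$ is the nontrivial half and is precisely what piecewise polyhedrality of $\partial F$ (via Proposition~\ref{MSP}) buys us, whereas in the first part this uniformity is unnecessary because the rate is only asserted locally along the tail of $(x^k)_{k\in\NN}$ near $x^*$.
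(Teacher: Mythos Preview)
Your proposal is correct and follows essentially the same route as the paper: invoke Theorem~\ref{new-cov} for convergence, use the piecewise linear-quadratic structure (via \cite[Proposition~12.30]{rw} and Proposition~\ref{MSP}) to obtain uniform metric subregularity of $\partial F$, and then apply Theorem~\ref{TLC3} for the Q-linear rates, upgrading $\al$ to an initial-point-independent constant under global Lipschitz continuity. The paper's proof is slightly terser and, in the second part, phrases the choice of $\al$ by pointing back to the R-linear result (Proposition~\ref{TLC1}), but the logic and ingredients are the same as yours.
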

\begin{proof}
Suppose that the sequence $(x^k)_{k\in\NN}$ converges to some $x^*\in S^*$ by Theorem~\ref{new-cov}. Since $F$ is a convex piecewise linear-quadratic function, the graph of $\partial F$ is polyhedral and thus it is metrically subregular at $x^*$  for $0$ with a uniform rate $\kk^{-1}>0$, which does not depend on the choice of $(x^k)_{k\in\NN}$ and $x^*$ by Proposition~\ref{MSP}.  By Theorem~\ref{TLC3}, we have  $(x^k)_{k\in\NN}$ and $(F(x^k))_{k\in\NN}$ are locally convergent to some $x^*$  and the optimal value $F(x^*)$, respectively, with Q-linear rate.

To complete the proof, suppose that $\nabla f$ is globally Lipschitz continuous on  ${\rm int}(\dom f)\cap \dom g$ with constant $L$. It follows from the last part of Theorem~\ref{TLC1} that  $\al$ could be chosen as $\min\{\sigma,\frac{\theta}{L}\}$. Since the metric subregularity modulus of $\partial F$ is uniform as discussed above, the   linear rate in Theorem~\ref{TLC1} is independent from the choice of initial points.
\end{proof}

\begin{Remark}\label{rem}{\rm
It is worth noting that all the assumptions in Corollary~\ref{Ma} on initial data hold automatically in many important classes optimization problems in practice including the Tikhonov regularization, wavelet-based regularization, $\ell_1$ regularization, $\ell_\infty$ regularization least square problems; see further discussions about using FBS method in these problems in \cite{BTe, HYZ, LFP, TBZ}. Let us discuss a bit here about Lasso problem \eqref{Pros}.
It is easy to see that  $F_2$ in \eqref{Pros} is a convex piecewise linear-quadratic function. Moreover, the function $f(x)=\frac{1}{2}\|Ax-b\|^2$  has the gradient $\nabla f(x)=A^T(Ax-b)$ that is globally Lipschitz continuous on $\R^n$.  FBS method for problem \eqref{Pros} is also called {\em iterative shrinkage thresholding algorithm} (ISTA) \cite{BTe} via the shrinkage thresholding mapping $\prox_{\mu\|\cdot\|_1}$.  Recently, Tao-Boley-Zhang \cite[Theorem~5.9]{TBZ} shows that the ISTA iteration eventually linearly convergent  provided that \eqref{Pros} has a unique solution that satisfies a {\em strict complementarity} condition.  Our Corollary~\ref{Ma} tells that not only ISTA iteration but also their functional iteration  eventually reach the stage of linear convergence without adding any extra condition. Moreover, the linear rate is uniform and computable; see our Section~6.3 for computing this rate and also the global linear convergence of ISTA.

}
\end{Remark}

\section{Linear convergence of forward-backward splitting method in some structured optimization problems}

\subsection{Poisson linear inverse problem}
This subsection devotes to the study of the {\em eventually} linear convergence of FBS when solving the following  standard Poisson regularized problem \cite{C,VSK}
\begin{equation}\label{PKL}
\min_{x\in \R^n_+}\sum_{i=1}^mb_i\log\frac{b_i}{(Ax)_i}+(Ax)_i-b_i,
\end{equation}
where  $A\in \R^{m\times n}_+$ is an $m\times n$  matrix with nonnegative entries and  nontrivial rows, and $b\in \R^m_{++}$ is a positive vector. This problem is usually used to recover a signal $x\in \R^n_+$ from the  measurement $b$ corrupted by Poisson noise satisfying $Ax\simeq b$. The problem \eqref{PKL} could be written in term of \eqref{prob} in which
\begin{equation}\label{fgh}
f(x):=h(Ax),\qquad g(x)=\delta_{\R_+^n}(x), \qquad \mbox{and}\qquad F_3(x):=h(Ax)+g(x),
\end{equation}
where $h$ is the Kullback-Leibler divergence defined by
\begin{eqnarray}\label{H}
h(y)=\left\{\begin{array}{ll} \disp\sum_{i=1}^mb_i\log\dfrac{b_i}{y_i}+y_i-b_i\quad &\mbox{if}\quad y\in \R^m_{++},\\
 +\infty \quad &\mbox{if}\quad y\in \R^m_+\setminus\R^m_{++}.\end{array}\right.
\end{eqnarray}
Note from \eqref{fgh} and \eqref{H} that $\dom f=A^{-1}(\R^m_{++})$, which is an open set. Moreover, since $A\in \R^{m\times n}_+$, we have $\dom f\cap \dom g={\rm int} (\dom f)\cap \dom g=A^{-1}(\R^m_{++})\cap \R^n_+\neq \emptyset$ and $f$ is continuously differentiable  at any point on $ \dom f\cap \dom g$. The standing assumptions {\bf A1} and {\bf A2} are satisfied for Problem \eqref{PKL}. Moreover, since the function $F_3$ is bounded below and coercive, the optimal solution set to problem \eqref{PKL} is always nonempty.

It is worth noting further that  $\nabla f$ is locally Lipschitz continuous at any point ${\rm int}(\dom f)\cap \dom g$ but not globally Lipschitz continuous on ${\rm int}(\dom f)\cap \dom g$. Our  Theorem~\ref{l-rate} is applicable to solving \eqref{PKL} with global convergence  rate $o(\frac{1}{k})$. In the recent work \cite{BBT}, a new algorithm rather close to FBS was designed with applications to solving \eqref{PKL}. However, the theory developed in \cite{BBT} could not guarantee the global convergence of their optimal sequence $(x^k)_{k\in\NN}$  when solving \eqref{PKL}, since one of their assumptions on the closedness of the domain of their auxiliary Legendre function in \cite[Theorem~2]{BBT} is not satisfied.
Our intent in this subsection is to reveal the Q-linear convergence of our method when solving \eqref{PKL} in the sense of Theorem~\ref{TLC3}.  In order to do so, we need to verify the metric subregularity of $\partial F_3$ at any optimal minimizer for $0$, or the second-order growth condition of $F_3$. Note further that the Kullback-Leibler divergence  $h$ is not strongly convex and $\nabla f$ is not globally Lipschitz continuous; hence, standing assumptions in \cite{DL} are not satisfied. Proving the metric subregularity of $\partial F_3$ at an optimal solution via the approach of \cite{DL} needs to be proceeded with caution.

\begin{lemma}\label{LKL}    Let $\ox$ be an optimal solution to problem \eqref{PKL}. Then for any $R>0$, we have
\begin{equation}\label{GP}
F_3(x)-F_3(\ox)\ge \nu d^2(x;S^*)\quad \mbox{for all}\quad x\in \B_R(\ox)
\end{equation}
with some positive constant $\nu$. Consequently, $\partial F_3$ is metrically subregular with  at $\ox$ for $0$ with modulus $\nu^{-1}$.
\end{lemma}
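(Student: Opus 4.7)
The plan is to reduce \eqref{GP} to a combination of local strong convexity of $h$ (on a well-chosen compact subset of $\R^m_{++}$) with Hoffman's polyhedral error bound for the optimal set $S^*$. I would proceed as follows.

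First, I would identify the structure of $S^*$. Since $h$ is strictly convex on $\R^m_{++}$, the value $Ax^*$ is constant over $x^* \in S^*$; call it $y^* \in \R^m_{++}$. The optimality condition $-A^T\nabla h(y^*) \in N_{\R^n_+}(\ox)$ then yields the polyhedral description
\begin{equation*}
S^* = \{x \in \R^n_+\,|\; Ax = y^*,\ x_I = 0\},\qquad c := A^T\nabla h(y^*)\ge 0,\qquad I:=\{i\,|\; c_i>0\}.
\end{equation*}

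Next, I would perform a case split based on $L_1 := \{F_3 \le F_3(\ox) + 1\}$. Since $F_3$ is coercive, $L_1$ is compact. Moreover, on $L_1$ the entries of $Ax$ are bounded away from zero: if $(Ax)_i$ were small, the term $b_i\log(b_i/(Ax)_i)$ would blow up, violating $x\in L_1$. Hence $A(\B_R(\ox)\cap L_1)$ lies in a compact convex subset $K\subset \R^m_{++}$, and on $K$ the Hessian $\nabla^2 h(y)=\mathrm{diag}(b_i/y_i^2)$ is uniformly positive definite, so $h$ is $\mu$-strongly convex on $K$ for some $\mu>0$. In the far-away case $x\in \B_R(\ox)\setminus L_1$, we have $F_3(x)-F_3(\ox)\ge 1 \ge R^{-2} d^2(x;S^*)$ since $\ox\in S^*$ forces $d(x;S^*)\le R$. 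In the near case $x\in \B_R(\ox)\cap L_1$, strong convexity of $h$ at $y^*$ combined with $\langle c,x-\ox\rangle\ge 0$ and $\langle c,\ox\rangle=0$ gives
\begin{equation*}
F_3(x)-F_3(\ox)\ \ge\ \langle c,x\rangle+\frac{\mu}{2}\|Ax-y^*\|^2.
\end{equation*}
Using $\ox_i=0$ for $i\in I$ and $0\le x_i\le R+|\ox_i|$ on $\B_R(\ox)$, the linear term dominates a quadratic one: $\langle c,x\rangle = \sum_{i\in I}c_i x_i \ge (c_{\min}/R')\|x_I\|^2$ for some $R'$, with $c_{\min}=\min_{i\in I}c_i>0$ (if $I=\emptyset$ this step is vacuous).

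Finally, I would invoke Hoffman's lemma applied to the polyhedral representation of $S^*$: there exists a constant $H>0$ such that $d(x;S^*)\le H(\|Ax-y^*\|+\|x_I\|)$ for every $x\in \R^n_+$. Squaring and combining with the previous display yields \eqref{GP} in Case B with a constant depending only on $\mu, c_{\min}, R, H$; taking the minimum with $R^{-2}$ from Case A gives the desired $\nu>0$. The final assertion that $\partial F_3$ is metrically subregular at $\ox$ for $0$ with modulus $\nu^{-1}$ is then an immediate consequence of Proposition~\ref{SR-grow}, implication {\rm [(ii)$\Longrightarrow$(i)]}, applied with $c=2\nu$ so that $\kk=2/c=\nu^{-1}$.

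The main obstacle is the Case B estimate, specifically that $h\circ A$ alone fails to grow quadratically in directions in $\ker A$: the $x_I$-direction is not detected by $\|Ax-y^*\|^2$. The remedy is to use the complementary slackness to convert the linear term $\langle c,x\rangle$ into a quadratic control over $\|x_I\|^2$, which requires the boundedness of $x$ guaranteed by $\B_R(\ox)$. This $R$-dependence is the reason the statement is formulated locally, with $\nu$ allowed to depend on $R$.
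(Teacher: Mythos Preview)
Your proof is correct and follows essentially the same strategy as the paper: identify $S^*$ as a polyhedron, invoke Hoffman's error bound, establish local strong convexity of $h\circ A$ on the relevant bounded set, and convert the residual linear term coming from the normal-cone part into a quadratic one via boundedness and nonnegativity. The only cosmetic differences are that the paper avoids your level-set case split by bounding $\nabla^2 h$ directly on $\B_R(\ox)\cap\dom F_3$ (via $\langle a_i,x\rangle \le \langle a_i,\ox\rangle+3\|a_i\|R$, which already yields the needed lower bound on the Hessian), and applies Hoffman with the single linear functional $|\langle\nabla f(\ox),x-\ox\rangle|$ in place of your coordinate block $\|x_I\|$.
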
	
\begin{proof}
Pick any $R>0$ and $x\in \B_R(\ox)$. We only need to prove \eqref{PKL} for the case  that $x\in \dom F_3\cap\B_R(\ox)$, i.e., $x\in A^{-1}(\R^n_{++})\cap\R^n_+\cap\B_R(\ox)$. Note that
\[
\nabla f(x)=\sum_{i=1}^m\big[1-\frac{b_i}{\la a_i,x\ra}\big]a_i\quad \mbox{and}\quad \la\nabla^2f(x)d,d\ra=\sum_{i=1}^m b_i \frac{\la a_i,d\ra^2}{\la a_i,x\ra^2}\quad \mbox{for all}\quad d\in \R^n,
\]
where $a_i$ is the i-th row of $A$.
Define $\oy:=A\ox$,  for any $x,u\in \B_R(\ox)\cap\dom f$ we have $[x,u]\subset \B_R(\ox)\cap\dom f$ and obtain from the mean-value  theorem that
\begin{align*}
f(x)-f(u)-\la\nabla f(u),x-u\ra&\disp=\frac{1}{2}\int_{0}^1\la\nabla^2f(u+t(x-u))x-u,x-u\ra dt\\
&\disp= \frac{1}{2}\int_{0}^1\sum_{i=1}^m b_i \frac{\la a_i,x-u\ra^2}{\la a_i,u+t(x-u)\ra^2}dt
\\
&\disp\ge \frac{1}{2}\int_{0}^1\sum_{i=1}^m b_i  \frac{\la a_i,x-u\ra^2}{[\la a_i,\ox\ra+\|a_i\|(\|u-\ox\|+t\|x-u\|)]^2}dt\\
&\disp\ge \frac{1}{2}\sum_{i=1}^m  \frac{b_i}{[\la a_i,\ox\ra+3\|a_i\|R]^2}\la a_i,x-u\ra^2.
\end{align*}	
Similarly, we have
\begin{equation}\label{ux}
f(u)-f(x)-\la\nabla f(x),u-x\ra\ge \frac{1}{2}\sum_{i=1}^m  \frac{b_i}{[\la a_i,\ox\ra+3\|a_i\|R]^2}\la a_i,u-x\ra^2\;\; \mbox{for}\;\;x,u\in  \B_R(\ox)\cap\dom f.
\end{equation}
Adding the above two inequalities gives us that
\begin{equation}\label{ux2}
\la\nabla f(x)-\nabla f(u),x-u\ra\ge  \sum_{i=1}^m  \frac{b_i}{[\la a_i,\ox\ra+3\|a_i\|R]^2}\la a_i,x-u\ra^2\quad \mbox{for all}\quad x,u\in  \B_R(\ox)\cap\dom f.
\end{equation}
We claim that the optimal solution set $S^*$ to problem \eqref{PKL} satisfies that
\begin{equation}\label{inS}
S^*=A^{-1}(\oy)\cap (\partial g)^{-1}(-\nabla f(\ox))\quad \mbox{with}\quad \oy=A\ox.
\end{equation}
Pick another optimal solution $\ou\in S^*$, we have $\ou_t:=\ox+t(\ox-\ou)\in S^*\subset \dom f$ for any $t\in [0,1]$ due to the convexity of $S^*$. By choosing $t$ sufficiently small, we have $\ou_t\in \B_R(\ox)\cap\dom f$. Note further that  $-\nabla f(\ou_t)\in \partial g(\ou_t)$ and $-\nabla f(\ox)\in \partial  g(\ox)$. Since $\partial g$ is a monotone operator, we obtain that
\[
0\ge \la \nabla f(\ox)-\nabla f(\ou_t),\ox-\ou_t\ra.
\]
This together with \eqref{ux2} tells us that $\la a_i, \ox-\ou_t\ra=0$ for all $i=1,\ldots,m$. Hence $A\ox=A\ou=\oy$ for any $\ou\in S^*$, which also implies that
\begin{equation}\label{li}
\nabla f(\ou)=A^T\nabla h(A\ou)=A^T\nabla h(A\ox)=\nabla f(\ox).
\end{equation}
This verifies the inclusion ``$\subset$'' in \eqref{inS}. The opposite inclusion is trivial. Indeed, take any $u$ satisfying that $Au=\oy$ and $-\nabla f(\ox)\in \partial g(u)$, similarly to \eqref{li} we have $-\nabla f(u)=-\nabla f(\ox)\in \partial g(u)$. This shows that $0\in  \nabla f(u)+\partial g(u)$, i.e., $u\in S^*$. The proof for equality \eqref{inS} is completed.

Note from \eqref{inS} that the optimal solution set $S^*$ is a polyhedral with the following format
\[
S^*=\{u\in \R^n\,|\; Au=\oy=A\ox, \la\nabla f(\ox),u\ra=0, u\in \R^n_+\}
\]
due to the fact that $(\partial g)^{-1}(-\nabla f(\ox))=\{u\in \R^n_+\,|\; \la \nabla f(\ox),u\ra=0= \la\nabla f(\ox),\ox\ra\}. $ Thanks to the Hoffman's lemma, there exists a constant $\gamma>0$ such that
\begin{equation}\label{Hoff}
d(x;S^*)\le \gamma(\|Ax-A\ox\|+ |\la\nabla f(\ox),x-\ox\ra|)\quad \mbox{for all}\quad x\in \R^n_+.
\end{equation}
Moreover, for any $x\in \B_R(\ox)\cap\R^n_+$,   \eqref{ux} tells us that
\begin{eqnarray}\label{we}\begin{array}{ll}
f(x)-f(\ox)-\la\nabla f(\ox),x-\ox\ra&\disp\ge \frac{1}{2}\min_{1\le i\le m}\Big[\frac{b_i}{[\la a_i,\ox\ra+3\|a_i\|R]^2}\Big]\|Ax-A\ox\|^2.
\end{array}\end{eqnarray}
This implies that
\begin{align*}
f(x)-f(\ox)&\disp\ge \frac{1}{2}\min_{1\le i\le m}\Big[\frac{b_i}{[\la a_i,\ox\ra+3\|a_i\|R]^2}\Big]\|Ax-A\ox\|^2+ \la\nabla f(\ox),x-\ox\ra\\
&\disp\ge \frac{1}{2}\min_{1\le i\le m}\Big[\frac{b_i}{[\la a_i,\ox\ra+3\|a_i\|R]^2}\Big]\|Ax-A\ox\|^2+\frac{1}{\|\nabla f(\ox)\|\cdot\|x-\ox\|}\la\nabla f(\ox),x-\ox\ra^2\\
&\ge \disp \min\Big\{\frac{1}{2}\min_{1\le i\le m}\Big[\frac{b_i}{[\la a_i,\ox\ra+3\|a_i\|R]^2}\Big], \frac{1}{\|\nabla f(\ox)\|R}\Big\}[\|Ax-A\ox\|^2+\la\nabla f(\ox),x-\ox\ra^2]\\
&\ge \disp \frac{1}{2}\min\Big\{\frac{1}{2}\min_{1\le i\le m}\Big[\frac{b_i}{[\la a_i,\ox\ra+3\|a_i\|R]^2}\Big], \frac{1}{\|\nabla f(\ox)\|R}\Big\}\big[\|Ax-A\ox\| +|\la\nabla f(\ox),x-\ox\ra| \big]^2\\
&\disp\ge \frac{1}{2\gamma^2} \min\Big\{\frac{1}{2}\min_{1\le i\le m}\Big[\frac{b_i}{[\la a_i,\ox\ra+3\|a_i\|R]^2}\Big], \frac{1}{\|\nabla f(\ox)\|R}\Big\}d^2(x;S^*),
\end{align*}
where the fourth inequality follows from the elementary inequality that $\frac{(a+b)^2}{2} \le a^2+b^2$ with $a,b \ge 0$, and the last inequality is from \eqref{Hoff}. This clearly ensures \eqref{GP}. The second part of the lemma is a consequence of Proposition \ref{SR-grow}.\end{proof}

When applying FBS to solving problem \eqref{PKL}, we have
\begin{equation}\label{FBKL}
x^{k+1}=\mathbb{P}_{\R^n_+}\left(x^k-\al_k\sum_{i=1}^m\big[1-\frac{b_i}{\la a_i,x^k\ra}\big]a_i\right)\quad \mbox{with}\quad x^0\in A^{-1}(\R^n_{++})\cap\R^n_+,
\end{equation}
where $\al_k$ is determined from the Beck-Teboulle's line search and $\mathbb{P}_{\R^n_+}(\cdot)$ is the projection mapping to $\R^n_+$. Due to Corollary~\ref{prop1}, all $x^k$ are well-defined and $F_3(x^k)$ are finite.

\begin{corollary}\label{CoroKL} {\rm (Q-linear convergence of method \eqref{FBKL})}  Let $(x^k)_{k\in\NN}$ be the sequence generated from \eqref{FBKL} with $x^0\in A^{-1}(\R^n_+)\cap\R^n_+$ for solving the
Poisson regularized problem \eqref{PKL}. Then the sequences $(x^k)_{k\in\NN}$  and $(F_3(x^k))_{k\in\NN}$ are  Q-linearly convergent to an optimal solution and the optimal value  to \eqref{PKL} respectively.
\end{corollary}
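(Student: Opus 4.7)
The plan is to reduce the corollary directly to the Q-linear convergence machinery already established in Theorem~\ref{TLC3}, using Lemma~\ref{LKL} to supply the missing metric subregularity hypothesis. All the ingredients are essentially in place; the task is to verify each hypothesis of Theorem~\ref{TLC3} for the specific data $(f,g)$ in \eqref{fgh}.

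First I would confirm that the FBS iteration \eqref{FBKL} falls under our framework. The discussion immediately after \eqref{H} already notes that for this $(f,g)$ the standing assumptions \textbf{A1} and \textbf{A2} are satisfied, because $\dom f\cap\dom g={\rm int}(\dom f)\cap\dom g=A^{-1}(\R^m_{++})\cap\R^n_+$ is nonempty and $f=h\circ A$ is $\mathcal{C}^\infty$ on this open set. Since the starting point $x^0$ lies in this set, Corollary~\ref{prop1} guarantees that the whole sequence $(x^k)_{k\in\NN}$ is well-defined and stays inside ${\rm int}(\dom f)\cap\dom g$ with finite $F_3(x^k)$. Next, because $F_3$ is coercive and bounded below, the optimal set $S^*$ is nonempty, so Theorem~\ref{new-cov}(i) yields the global convergence $x^k\to x^*$ for some $x^*\in S^*$ together with $F_3(x^k)\to\min F_3$.

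The second step is to produce the local Lipschitz constant and the metric subregularity modulus needed by Theorem~\ref{TLC3}. Since $x^*\in{\rm int}(\dom f)\cap\dom g$ and $\nabla f(x)=A^T\nabla h(Ax)$ with $\nabla h$ smooth on $\R^m_{++}$, the map $\nabla f$ is locally Lipschitz continuous around $x^*$ with some constant $L>0$. Lemma~\ref{LKL}, applied at $\ox=x^*$ with any fixed $R>0$, provides a constant $\nu>0$ and the quadratic growth inequality \eqref{GP} on $\B_R(x^*)$; by Proposition~\ref{SR-grow} this is equivalent to metric subregularity of $\partial F_3$ at $x^*$ for $0$ with modulus $\kappa=\nu^{-1}$ (up to the stated constant).

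With these pieces assembled, Theorem~\ref{TLC3} applies verbatim and yields both \eqref{LC4} and \eqref{LC5} in the form
\[
\|x^{k+1}-x^*\|\le \frac{1}{\sqrt{1+\al\kk/4}}\|x^k-x^*\|,\qquad
F_3(x^{k+1})-F_3(x^*)\le \frac{\sqrt{1+\al\kk/4}+1}{2\sqrt{1+\al\kk/4}}\,(F_3(x^k)-F_3(x^*)),
\]
for all $k$ sufficiently large, where $\al=\min\{\al_K/2,\theta/(2L)\}$ and $K$ is the index from Proposition~\ref{lema-alpha}. This is precisely the claimed Q-linear convergence of $(x^k)$ and $(F_3(x^k))$. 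The main obstacle here is essentially packaged away inside Lemma~\ref{LKL}: the Kullback--Leibler term is neither strongly convex nor has globally Lipschitz gradient, so the generic results of \cite{DL} do not apply directly, and one must rely on the local second-order growth derived from the explicit Hessian bound on $\B_R(\ox)\cap\dom f$ together with the polyhedral structure of $S^*$ exposed via \eqref{inS}. Once that lemma is in hand, the reduction to Theorem~\ref{TLC3} is mechanical.
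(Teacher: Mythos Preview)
Your proposal is correct and follows essentially the same route as the paper's own proof: verify \textbf{A1}--\textbf{A2} and nonemptiness of $S^*$, invoke Theorem~\ref{new-cov} for global convergence $x^k\to x^*$, supply local Lipschitz continuity of $\nabla f$ near $x^*$, apply Lemma~\ref{LKL} for metric subregularity of $\partial F_3$, and conclude via Theorem~\ref{TLC3}. The only difference is that you spell out more details (well-definedness via Corollary~\ref{prop1}, the explicit rate constants) than the paper's terse three-sentence argument, but the logical skeleton is identical.
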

\begin{proof} Since both functions $f$ and $g$ in problem \eqref{PKL} satisfy our standing assumptions {\bf A1} and {\bf A2}, and problem \eqref{PKL} always  has optimal solutions, the sequence $(x^k)_{k\in\NN}$ converges to an optimal solution $\ox$ to problem \eqref{PKL} by Theorem~\ref{new-cov}.
Moreover, it follows from Lemma~\ref{LKL} that $\partial F_3$ is metrically subregular at $\ox$ for $0$.. Since $\nabla f$ is locally Lipschitz continuous around $\ox$, the combination of Theorem~\ref{TLC3} and Lemma~\ref{LKL} tells us that $(x^k)_{k\in\NN}$ is Q-linearly convergent to $\ox$.
\end{proof}

By using this approach, it is similar to show that quadratic growth condition in Lemma~\ref{LKL} is also valid for the following problem
\begin{equation}\label{PKL2}
\min_{x\in \R^n_+}\sum_{i=1}^mb_i\log\frac{b_i}{(Ax)_i}+(Ax)_i-b_i+\mu\, k(x),
\end{equation}
where $k(x)$ is either the $\ell_1$ norm $\|x\|_1$ \cite{BBT}  or the discrete total variation  $TV(x)$ \cite{Bon12}, while $\mu>0$ is the penalty parameter (due to the polyhedral property of $k(x)$.) In particular, when $k(x)=\|x\|_1$, the FBS method for solving \eqref{PKL2} is practical by modifying the function $f(x)$ in \eqref{fgh} to $h(Ax)+\la e,x\ra$ with $e=(1,1,\ldots,1)\in \R^n$. This together with Corollary~\ref{CoroKL} clearly shows that FBS \eqref{PKL} solves the Poisson inverse problem with sparse regularization \cite{BBT} with linear rate.

\subsection{Forward-backward splitting method under partial smoothness}
This subsection is motivated from the recent work \cite{LFP,LFP2} of Liang-Fadili-Peyr\'e in which they study the local linear convergence of FBS method under additional assumptions of partial smoothness on the (possibly nonsmooth) function $g$ that allows them to cover a wide range of important polyhedral/nonpolyhedral optimization problems. The main result of \cite{LFP} is to obtain the linear convergence of FBS iteration under a nondegeneracy assumption and a local strong convexity one. In this section, we revisit the problem in \cite{LFP,LFP2} and obtain some improvements such as   local Q-linear convergence of FBS iteration can be obtained under weaker conditions.

Let us proceed by providing some useful notions mainly used in this section. For any set $\Omega\subset\R^n$, we denote ${\rm ri}\, \Omega$, ${\rm aff}\,\Omega$, ${\rm par}\, \Omega$ by the relative interior, the affine hull, and the subspace parallel to $\Omega$, respectively. Given $x\in \dom g$, define $
T_x:=\left[{\rm par}\,(\partial g(\ox))\right]^\perp.
$ Next we recall the definition of partial smoothness of functions introduced by Lewis \cite{L} with a slight modification for convex functions as in \cite{LFP}. The class of partly smooth functions is broad, including, in particular, convex piecewise linear functions and  spectral functions.

\begin{definition}[Partial smoothness] The convex function $g\in \Gamma_0(\R^n)$ is $\mathcal{C}^2$-{\em partly smooth} at $\ox$ relative to a set $\mathcal{M}$ containing $\ox$ if
\begin{enumerate}
\item {\rm (Smoothness)} $\mathcal{M}$ is a $\mathcal{C}^2$-manifold around $\ox$ and $g$ restricted to $\mathcal{M}$ is $\mathcal{C}^2$ around $\ox$.

\item {\rm (Sharpness)} The tangent space $T_\mathcal{M}(\ox)$ is $T_{\ox}$.

\item {\rm (Continuity)} The subgradient  mapping $\partial g$ is continuous at $\ox$ relative to $\mathcal{M}$.
\end{enumerate}
The class of partly smooth and lower semi-continuous convex functions at $\ox$ relative to $\mathcal{M}$ defined above is denoted by ${\rm PS}_{\ox}(\mathcal{M})$.

\end{definition}

We also define here the so-called {\em covariant Hessian} of a partly smooth function \cite[Definition~2.11]{LZ} as follows. Its computation via the manifold $\mathcal{M}$ and the representation function of $g$ on $\mathcal{M}$ can be found in  \cite{MM}.

\begin{definition}[Covariant Hessian]\label{Cov} Let $g\in \Gamma_0(\R^n)$ be   $\mathcal{C}^2$-{\em partly smooth}  at $\ox$ relative to $\mathcal{C}^2$ manifold $\mathcal{M}$ containing $\ox$. The {\em covariant Hessian},
$\nabla^2_\mathcal{M}g(\ox):T_\mathcal{M}(\ox)\times T_\mathcal{M}(\ox)\to \R$ is the unique self-adjoint and bilinear map satisfying
\[
\la \nabla^2_\mathcal{M} g(\ox)u,u\ra=\frac{d^2}{dt^2}g(\Pi_\mathcal{M}(\ox+tu))\Big|_{t=0}\quad \mbox{for all}\quad u\in T_\mathcal{M}(\ox),
\]
which is known to be well-defined.
\end{definition}

The following result gives a characterization of strong metric subregularity for $\partial F$. Its root can be found from the recent result of Lewis-Zhang \cite[Theorem~6.3]{LZ}.

\begin{proposition}[Characterizations of  strong metric subregularity: partial smoothness cases]\label{Char1} Let $x^*\in S^*$ be an optimal solution to problem \eqref{prob}. Suppose that $f$ is $\mathcal{C}^2$ around $x^*$ and $g$ is  $\mathcal{C}^2$-partly smooth at the point $x^*$ relative to the $\mathcal{C}^2$ manifold $\mathcal{M}$. Suppose further that $-\nabla f(x^*)\in {\rm ri}\, \partial g(x^*)$. The following statements are equivalent:
\begin{enumerate}[{\rm(i)}]

\item  $\partial F$ is strongly metrically subregular at $x^*$ for $0$.

\item $x^*$ is a tilt-stable local minimizer to $F$ in the sense of
 {\rm Proposition~\ref{tilt}{\rm (iii)}}.

\item The following positive-definite condition holds:
\begin{equation}\label{tilt1}
\la(\nabla^2 f(x^*)+\nabla^2_\mathcal{M}g(x^*))u,u\ra>0\quad \mbox{for all}\quad u\in T_\mathcal{M}(x^*)\neq\{0\}.
\end{equation}
\end{enumerate}
Moreover, if {\rm (iii)} is fulfilled then $\partial F$ is strongly metrically subregular at $x^*$ for $0$ with any modulus $\kk>\mu^{-1}$, where $\mu$ is defined by
\begin{equation}\label{c}
\mu:=\min\left\{\frac{\la (\nabla^2 f(x^*)+\nabla^2_\mathcal{M}g(x^*))u,u\ra}{\|u\|^2}|\; u\in T_\mathcal{M}(x^*)\right\}>0
\end{equation}
with the convention $\frac{0}{0}=\infty$.
\end{proposition}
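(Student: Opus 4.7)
The plan is to route the proof through the equivalence (ii)$\Leftrightarrow$(iii), then deduce (i)$\Leftrightarrow$(ii), concluding with the modulus estimate. The equivalence (ii)$\Leftrightarrow$(iii) is precisely the content of \cite[Theorem~6.3]{LZ} of Lewis--Zhang: under $\mathcal{C}^2$-smoothness of $f$, $\mathcal{C}^2$-partial smoothness of $g$ relative to $\mathcal{M}$, and the non-degeneracy condition $-\nabla f(x^*)\in{\rm ri}\,\partial g(x^*)$, tilt stability of $F=f+g$ at $x^*$ is characterized by positive-definiteness of $\nabla^2 f(x^*)+\nabla^2_\mathcal{M} g(x^*)$ on $T_\mathcal{M}(x^*)$, which is exactly \eqref{tilt1}.

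The implication (ii)$\Rightarrow$(i) is immediate from Proposition~\ref{tilt}: tilt stability of $x^*$ is equivalent to strong metric regularity of $\partial F$ at $x^*$ for $0$, and strong metric regularity trivially implies strong metric subregularity. For the converse (i)$\Rightarrow$(iii), I would begin with strong metric subregularity of $\partial F$ at $x^*$ for $0$ and apply the chain (i)$\Rightarrow$(ii)$\Rightarrow$(iii) in Proposition~\ref{SR-grow2} to obtain positive-definiteness of the graphical derivative $D(\partial F)(x^*|0)$ on $\R^n$. Combining this with the identification calculus in \cite[Sections~5--6]{LZ}, which computes $D(\partial F)(x^*|0)$ on $T_\mathcal{M}(x^*)$ as the action of $\nabla^2 f(x^*)+\nabla^2_\mathcal{M} g(x^*)$, the positive-definiteness descends to condition \eqref{tilt1} on $T_\mathcal{M}(x^*)$.

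For the modulus estimate, assume (iii) with constant $\mu$ from \eqref{c}. The same Lewis--Zhang second-order chain rule, run in the reverse direction, shows that $D(\partial F)(x^*|0)$ satisfies the positive-definiteness condition \eqref{pos} on all of $\R^n$ with modulus $\ell=\mu$: on $T_\mathcal{M}(x^*)$ the contribution is exactly $\mu$ by definition of the constant, while on directions outside $T_\mathcal{M}(x^*)$ the normal-cone component of $\partial g$---nontrivially present because $-\nabla f(x^*)\in{\rm ri}\,\partial g(x^*)$---forces any pair $v\in D(\partial F)(x^*|0)(u)$ to satisfy $\la v,u\ra\ge\mu\|u\|^2$. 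Applying the implication (iii)$\Rightarrow$(i) in Proposition~\ref{SR-grow2} then yields strong metric subregularity with any $\kk>\mu^{-1}$. The main obstacle in this plan is the careful extraction of the Lewis--Zhang graphical-derivative formula for $D(\partial F)(x^*|0)$ and the verification that the normal-cone term genuinely dominates on directions leaving the manifold, so that the tangent-space positive-definiteness \eqref{tilt1} indeed suffices for the ambient condition required in Proposition~\ref{SR-grow2}.
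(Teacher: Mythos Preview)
Your overall architecture---cycle through (ii)$\Leftrightarrow$(iii) via Lewis--Zhang, then (ii)$\Rightarrow$(i) via Proposition~\ref{tilt}---matches the paper. However, two steps diverge from the paper and one of them carries a genuine gap.

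For (i)$\Rightarrow$(iii) you invoke Proposition~\ref{SR-grow2} to obtain positive-definiteness of $D(\partial F)(x^*|0)$, then appeal to ``identification calculus in \cite[Sections~5--6]{LZ}'' to identify this graphical derivative on $T_\mathcal{M}(x^*)$ with $\nabla^2 f(x^*)+\nabla^2_\mathcal{M} g(x^*)$. The difficulty is that \cite{LZ} develops calculus for the \emph{coderivative} (Mordukhovich generalized Hessian), not for the graphical derivative $D(\partial F)$; no result there hands you the formula you need. The paper instead proceeds (i)$\Rightarrow$(ii): strong metric subregularity gives quadratic growth \eqref{grow2} via Proposition~\ref{SR-grow}, which together with $0\in{\rm ri}\,\partial F(x^*)$ makes $x^*$ a \emph{strong critical point} in the sense of \cite[Definition~3.4]{LZ}; then \cite[Theorem~6.3]{LZ} converts this directly to tilt stability. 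This route stays entirely within the coderivative framework of \cite{LZ} and avoids any graphical-derivative computation.

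The more serious gap is in your modulus argument. You want to apply Proposition~\ref{SR-grow2}(iii)$\Rightarrow$(i) with $\ell=\mu$, which requires $\la v,u\ra\ge\mu\|u\|^2$ for \emph{all} $u\in\R^n$ and all $v\in D(\partial F)(x^*|0)(u)$, not just for $u\in T_\mathcal{M}(x^*)$. Your claim that for $u\notin T_\mathcal{M}(x^*)$ the normal-cone component ``forces'' this inequality with the \emph{same} constant $\mu$ is unsupported: the sharpness axiom and non-degeneracy give you qualitative control of $\partial g$ in normal directions, but there is no mechanism tying the resulting lower bound to the tangential constant $\mu$ defined in \eqref{c}. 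The paper sidesteps this entirely by invoking \cite[Theorem~3.6]{MN1}, which shows directly that condition \eqref{tilt1} yields tilt stability with any modulus $\kk>\mu^{-1}$; Proposition~\ref{tilt} then transfers this modulus to strong metric (sub)regularity.
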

\begin{proof}
Let us start with the implication [(i)$\Longrightarrow$(ii)]. Suppose that $\partial F$ is strongly  metrically subregular at $x^*$ for $0$ with modulus $\kk^{-1}>0$. It follows from Proposition~\ref{SR-grow} that there is some neighborhood $U$ of $x^*$ such that
\begin{equation}\label{qua}
F(x)\ge F(x^*)+\frac{1}{2\kk}\|x-x^*\|^2\quad \mbox{for all}\quad x\in U.
\end{equation}
Note that $0\in {\rm ri}\, \partial F(x^*)$. This together \eqref{qua} means that $x^*$ is a {\em strong critical point} of $F$ relative $\mathcal{M}$ in the sense of \cite[Definition~3.4]{LZ}. Since $F=f+g$ is $\mathcal{C}^2$-partly smooth at the point $x^*$ relative to the $\mathcal{C}^2$ manifold $\mathcal{M}$ due to \cite[Corollary~4.7]{L}, we get from \cite[Theorem~6.3]{LZ} that $x^*$ is a tilt stable local minimum of $f$. The converse implication [(ii)$\Longrightarrow$(i)] is trivial by Proposition~\ref{tilt}.

 Applying \cite[Theorem~6.1 and Theorem~5.3]{LZ} to the function $F$, we also have the equivalence of (ii) to the following condition
\begin{equation*}
\la \nabla^2_\mathcal{M}F(x^*)u+v,u\ra>0\quad \mbox{for all}\quad u\in T_\mathcal{M}(x^*)\setminus\{0\}, v\in N_\mathcal{M}(x^*),
\end{equation*}
where $N_\mathcal{M}(x^*)$ is the normal cone to $\mathcal{M}$ at $x^*$, which is the orthogonal dual of the tangent cone $T_\mathcal{M}(x^*)$ in this case, since $T_\mathcal{M}(x^*)$ is indeed a subspace.

Since  $f$ is $\mathcal{C}^2$ around $x^*$, we have $\nabla^2_\mathcal{M} F(x^*)=\nabla^2 f(x^*)+\nabla^2_\mathcal{M} g(x^*)$. Moreover, $\la v,u\ra= 0$ for all $u\in T_\mathcal{M}(x^*)\setminus\{0\}$ and $v\in N_\mathcal{M}(x^*)$, the latter is equivalent to \eqref{tilt1}. We derive the equivalence between (ii) and (iii).

 Finally let us prove the connection between $\mu$ in \eqref{c} and the strong metric subregular modulus of $\partial F$ at $x^*$ for $0$ in (i).  Suppose that \eqref{tilt1} holds. Then it follows from \cite[Theorem~3.6]{MN1} that $x^*$ is a tilt-stable local minimizer to $F$ with any modulus $\kk>\mu^{-1}$. By Proposition~\ref{tilt},  we have $\partial F$ is strongly metrically subregular at $x^*$  for $0$ with such modulus $\kk$. The proof is complete.
\end{proof}

Recently, to prove the local linear convergence of FBS method when $g$ is partly smooth, \cite{LFP} supposes the nondegeneracy condition $-\nabla f(x^*)\in {\rm ri}\, (\partial g(x^*))$  together with the following assumption
\begin{equation}\label{tilt2}
\la\nabla^2 f(x^*)u,u\ra\ge c\|u\|^2\quad \mbox{for all}\quad u\in T_\mathcal{M}(x^*)
\end{equation}
for some $c>0$. We show next that this condition is stronger than \eqref{tilt1} and thus also guarantee the strong metric subregularity of $\partial F$ at $x^*$ for $0$.

\begin{corollary}[Sufficient condition for strong metric subregularity]\label{Suf} Let $x^*\in S^*$ be an optimal solution. Suppose that $f$ is $\mathcal{C}^2$ around $x^*$ and $g$ is  $\mathcal{C}^2$-partly smooth at the point $x^*$ relative to the $\mathcal{C}^2$-manifold $\mathcal{M}$. Suppose further that $-\nabla f(x^*)\in {\rm ri}\, \partial g(x^*)$. If  there exists some $c>0$ such that \eqref{tilt2} is satisfied, then $\partial F$ is strongly metrically subregular at $x^*$  for $0$ with any modulus $\kk>c^{-1}$.
\end{corollary}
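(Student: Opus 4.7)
The overall strategy is straightforward: apply Proposition~\ref{Char1} to the point $x^*$. Since the hypotheses on $f,g,\mathcal{M}$ and the nondegeneracy $-\nabla f(x^*)\in{\rm ri}\,\partial g(x^*)$ are exactly those of that proposition, it suffices to verify the positive-definiteness condition \eqref{tilt1} and then to compare the constant $\mu$ in \eqref{c} with the given $c$. The plan is therefore: (1) establish that the covariant Hessian of the convex function $g$ is positive semi-definite on $T_\mathcal{M}(x^*)$; (2) add this to the assumption \eqref{tilt2} to obtain \eqref{tilt1} with constant at least $c$; (3) invoke the modulus statement at the end of Proposition~\ref{Char1} to conclude strong metric subregularity with any $\kk>c^{-1}$.

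The core step is (1), namely the inequality
\begin{equation*}
\la \nabla^2_\mathcal{M} g(x^*)u,u\ra\ge 0\quad\text{for all}\quad u\in T_\mathcal{M}(x^*).
\end{equation*}
I would prove this directly from the definition. Fix $u\in T_\mathcal{M}(x^*)$ and set $\phi(t):=\Pi_\mathcal{M}(x^*+tu)$, so that $\phi(0)=x^*$, $\phi'(0)=u$, and $\phi''(0)\in N_\mathcal{M}(x^*)$. By partial smoothness plus sharpness, $N_\mathcal{M}(x^*)={\rm par}(\partial g(x^*))$; by the nondegeneracy assumption, for some $\delta>0$ every $w\in N_\mathcal{M}(x^*)$ with $\|w\|\le\delta$ satisfies $v_w:=-\nabla f(x^*)+w\in\partial g(x^*)$. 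Writing the subgradient inequality $g(\phi(t))\ge g(x^*)+\la v_w,\phi(t)-x^*\ra$, expanding both sides to second order, and using that $\la w,u\ra=0$ together with $P_{T_\mathcal{M}(x^*)}(-\nabla f(x^*))=\nabla_\mathcal{M} g(x^*)$, the linear terms cancel and one obtains
\begin{equation*}
\la \nabla^2_\mathcal{M} g(x^*)u,u\ra+\la\nabla f(x^*),\phi''(0)\ra-\la w,\phi''(0)\ra\ge 0
\end{equation*}
for every admissible $w$. Choosing $w=\pm\delta\phi''(0)/\|\phi''(0)\|$ (when $\phi''(0)\neq 0$) traps $\la w,\phi''(0)\ra$ on both sides and forces the right-hand-side constant $\la\nabla f(x^*),\phi''(0)\ra$ to vanish in a way that yields $\la\nabla^2_\mathcal{M} g(x^*)u,u\ra\ge 0$; the case $\phi''(0)=0$ is immediate.

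With (1) in hand, step (2) is automatic: for every $u\in T_\mathcal{M}(x^*)\setminus\{0\}$,
\begin{equation*}
\la(\nabla^2 f(x^*)+\nabla^2_\mathcal{M} g(x^*))u,u\ra\ge \la\nabla^2 f(x^*)u,u\ra\ge c\|u\|^2>0,
\end{equation*}
which is \eqref{tilt1}. Moreover the constant $\mu$ in \eqref{c} satisfies $\mu\ge c$, hence $\mu^{-1}\le c^{-1}$. Proposition~\ref{Char1} then gives strong metric subregularity of $\partial F$ at $x^*$ for $0$ with any modulus $\kk>\mu^{-1}$, and in particular with any $\kk>c^{-1}$, completing the proof.

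The only delicate point is the positive semi-definiteness of the covariant Hessian of $g$; everything else is a direct substitution into Proposition~\ref{Char1}. An alternative to the first-principles argument above is to quote the second-order analysis in \cite{LZ} identifying the covariant Hessian of a partly smooth convex function at a strong critical point with the Riemannian Hessian of its smooth representation, which is automatically positive semi-definite at a local minimum on $\mathcal{M}$; either route is short, but one should make clear which sign convention for $\nabla^2_\mathcal{M}$ is being used to keep consistency with Proposition~\ref{Char1}.
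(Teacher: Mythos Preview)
Your overall strategy coincides with the paper's: reduce to Proposition~\ref{Char1} by showing that the covariant Hessian $\nabla^2_\mathcal{M} g(x^*)$ is positive semidefinite on $T_\mathcal{M}(x^*)$, so that \eqref{tilt2} forces \eqref{tilt1} with $\mu\ge c$ and hence any $\kk>c^{-1}$ works. That skeleton is exactly right.

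The difference is in how you justify the positive semidefiniteness of $\nabla^2_\mathcal{M} g(x^*)$. The paper does not argue from first principles; it simply invokes maximal monotonicity of $\partial g$ (convexity) and then cites \cite[Theorem~2.1]{PR2} together with \cite[Theorem~5.3]{LZ} to conclude $\la \nabla^2_\mathcal{M} g(x^*)u,u\ra\ge 0$ on $T_\mathcal{M}(x^*)$. Your ``alternative'' route is thus the paper's actual proof.

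Your primary first-principles argument, however, has a gap. From the subgradient inequality you correctly obtain
\[
\la \nabla^2_\mathcal{M} g(x^*)u,u\ra + \la \nabla f(x^*),\phi''(0)\ra - \la w,\phi''(0)\ra \ge 0
\]
for every admissible $w$ in a $\delta$-ball of $N_\mathcal{M}(x^*)$. But varying $w$ over $\pm\delta\phi''(0)/\|\phi''(0)\|$ only yields two \emph{one-sided} inequalities; it does not ``trap'' anything, and it certainly does not force $\la \nabla f(x^*),\phi''(0)\ra$ to vanish. What you actually get after taking the supremum over $w$ is $\la \nabla^2_\mathcal{M} g(x^*)u,u\ra \ge -\la \nabla f(x^*),\phi''(0)\ra + \delta\|\phi''(0)\|$, and the right-hand side can still be negative. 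Note also that convexity of $g$ alone does not make the restricted Hessian PSD on an arbitrary submanifold (e.g.\ a linear function on a circle); it is the combination of convexity with partial smoothness (sharpness) that is doing the work in \cite{LZ}. So either tighten the direct argument substantially---which would essentially reprove part of \cite{LZ}---or take the citation route, which is both shorter and what the paper does.
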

\begin{proof} Suppose that \eqref{tilt2} holds. Since $g$ is a convex function, its subgradient mapping $\partial g$ is maximal monotone. It follows from \cite[Theorem~2.1]{PR2} and \cite[Theorem~5.3]{LZ} that $\nabla^2_\mathcal{M} g(x^*)$ is positive semidefinite on $T_\mathcal{M}$ in the sense that
\[
\la \nabla^2_\mathcal{M} g(x^*)u,u\ra\ge 0\quad \mbox{for all}\quad u\in T_\mathcal{M}(x^*).
\]
This together with \eqref{tilt2} verifies \eqref{tilt1} and  that $\mu$ in \eqref{c} is smaller than or equal to $c$. Thus $\partial F$ is strongly  metric subregular at $x^*$ for $0$ with any radius $\kk>c^{-1}$. The proof is complete.
\end{proof}
\begin{Remark}{\rm Since $f$ is convex, $\nabla^2 f(x^*)\succeq 0$. The condition \eqref{tilt2} is indeed equivalent to the following one
\begin{equation}\label{tilt3}
{\rm Ker}\, \left(\nabla^2 f(x^*)\right)\cap T_\mathcal{M}(x^*)=\{0\}.
\end{equation}
In some particular application, e.g., $g(x)=\|x\|_1$, the covariant Hessian $\nabla_{\mathcal{M}}g$ is zero, and thus condition \eqref{tilt1} is the same with \eqref{tilt2} (or \eqref{tilt3}). However, in general $\nabla_{\mathcal{M}}g$ may be different from $0$ and thus \eqref{tilt1} is strictly weaker than both \eqref{tilt2} and \eqref{tilt3}. This observation together with the above result and  Proposition~2.3 tells us that \eqref{tilt3}  guarantees the validity of second-order growth condition in \eqref{grow2}, which is part of \cite[Proposition~4.1]{LFP2}.}
\end{Remark}

The following result motivated from \cite[Theorem~3.1]{LFP} also provides  Q-linear convergence of FBS method under the partial smoothness and a positive definite condition. However, as discussed above, our condition \eqref{tilt1} is weaker.

\begin{corollary}[Q-linear convergence under partial smoothness]\label{LCPS} Let $(x^k)_{k\in\NN}$  and $(\al_k)_{k\in\NN}$ be the sequences generated from FBS method. Suppose that the solution set $S^*$ is not empty, $(x^k)_{k\in\NN}$ is converging to some $x^*\in S^*$, $f$ is $\mathcal{C}^2$ around $x^*$, and that $g$ is  $\mathcal{C}^2$-partly smooth at the point $x^*$ relative to the $\mathcal{C}^2$ manifold $\mathcal{M}$. Suppose further that $-\nabla f(x^*)\in {\rm ri}\, \partial g(x^*)$ and the condition \eqref{tilt1} holds for $x^*$. Then there exists some $k\in\NN$ such that
\begin{align*}
\|x^{k+1}-x^*\|&\le\frac{1}{\sqrt{1+\al\kk}}\|x^k-x^*\|\\
|F(x^{k+1})-F(x^*)|&\le \frac{\sqrt{1+\al\kk}+1}{2\sqrt{1+\al\kk}} |F(x^{k})-F(x^*)|
\end{align*}
for any $k>K$, where  $\al$ is any positive  number smaller than $\disp\min\left\{\frac{\al_K}{2},\frac{\theta}{2\lm_{\rm max}(\nabla^2 f(x^*))}\right\}$, $\lm_{\rm max}(\nabla^2 f(x^*))$ is the biggest eigenvalue of     $(\nabla^2 f(x^*))$, and $\kk$ is any positive number smaller than $\mu$ in \eqref{c}.
\end{corollary}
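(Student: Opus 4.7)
The plan is to derive this as a direct corollary of Proposition~\ref{Char1} combined with Corollary~\ref{TLC2}. First I would apply Proposition~\ref{Char1}: the two hypotheses $-\nabla f(x^*) \in \mathrm{ri}\,\partial g(x^*)$ and the positive-definite condition \eqref{tilt1} are precisely item (iii) there, so the proposition yields that $\partial F = \nabla f + \partial g$ is strongly metrically subregular at $x^*$ for $0$ with any admissible modulus strictly greater than $\mu^{-1}$, where $\mu$ is the quantity in \eqref{c}. Translating into the notation of Corollary~\ref{TLC2}, this means that for every $\kk < \mu$ strong metric subregularity holds with modulus $\kk^{-1}$.

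Next I would establish local Lipschitzness of $\nabla f$ near $x^*$ with a sharp constant. Since $f \in \mathcal{C}^2$ near $x^*$, continuity of $\nabla^2 f$ at $x^*$ gives that for each $\ve>0$ there is a neighborhood of $x^*$ on which $\|\nabla^2 f(\cdot)\|\le \lm_{\rm max}(\nabla^2 f(x^*))+\ve$, whence $\nabla f$ is Lipschitz with some constant $L_\ve$ on that neighborhood. Because $x^k\to x^*$, this neighborhood eventually contains the tail $\{x^k\}_{k\ge K}$ for a sufficiently large $K$, so Corollary~\ref{TLC2} directly produces the two displayed Q-linear inequalities with $\al=\min\{\al_K/2,\theta/(2L_\ve)\}$. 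Letting $\ve\dn 0$ (at the cost of possibly enlarging $K$), any value $\al<\min\{\al_K/2,\theta/(2\lm_{\rm max}(\nabla^2 f(x^*)))\}$ becomes admissible, matching the statement exactly; the rates \eqref{Co3}--\eqref{Co4} then transfer verbatim.

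There is no genuine obstacle beyond bookkeeping of constants: the essential work was already done in Proposition~\ref{Char1}, where the Lewis--Zhang characterization of tilt-stability through the covariant Hessian was used to relate \eqref{tilt1} to strong metric subregularity of $\partial F$. The one point requiring a moment of care is that the two freedoms $\kk<\mu$ and $L>\lm_{\rm max}(\nabla^2 f(x^*))$ are controlled independently, and each can be sharpened at the expense of enlarging $K$ (through Proposition~\ref{lema-alpha} for the stepsize lower bound and through the neighborhood on which $\nabla f$ is Lipschitz). Once this is observed, the conclusion is immediate from Corollary~\ref{TLC2}.
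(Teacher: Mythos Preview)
Your proposal is correct and follows essentially the same route as the paper's own proof: invoke Proposition~\ref{Char1} to obtain strong metric subregularity of $\partial F$ at $x^*$ for $0$ with any modulus larger than $\mu^{-1}$, use the $\mathcal{C}^2$ assumption on $f$ to get local Lipschitzness of $\nabla f$ with any constant larger than $\lm_{\rm max}(\nabla^2 f(x^*))$, and then apply Corollary~\ref{TLC2}. The paper's proof is simply a terser version of what you wrote, without spelling out the $\ve\downarrow 0$ bookkeeping.
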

\begin{proof}
Since $f$ is $\mathcal{C}^2$ around $x^*$, $\nabla f$ is locally Lipschitz continuous around $x^*$ with any constant $L$ bigger than $\lm_{\rm max}(\nabla^2 f(x^*))$. Note also from Proposition~\ref{Char1} that condition \eqref{tilt1} ensures that $\partial F$ is strongly metrically subregular at $x^*$ for $0$  with any modulus bigger than $\mu^{-1}$ from \eqref{c}. This together with  Corollary~\ref{TLC2} verifies all the conclusions of this result.
\end{proof}

\subsection{Forward-backward splitting method for $\ell_1$-regularized problems}
In this section we consider the $\ell_1$-regularized optimization problems in \eqref{Prof}. In this case the function $g(x)=\mu\|x\|_1$ belongs to the class of partial smooth functions discussed in Subsection~5.2. However, unlike the study there, we will avoid the nondegeneracy condition $-\nabla f(x^*)\in {\rm ri}\, \partial g(x^*)$ (known also
 as  the strict complementarity condition \cite{HYZ}). To proceed, let us consider the following proposition computing the graphical derivative of $\partial \|\cdot\|_1$.

\begin{proposition}[Graphical derivative of $\partial \mu\|\cdot\|_1$]\label{Gra} Suppose that $\os\in \partial \mu\|x^*\|_1$.  Define $I:=\{j\in \{1,\ldots,n\}\,|\; |\os_j|=\mu\}$, $J:=\{j\in I \,|\;x^*_j\neq 0\}$, $K:=\{j\in I\,|\; x^*_j=0\}$, and $H(x^*):=\{u\in \R^n\,|\; u_j=0, j\notin I\mbox{ and } u_j\os_j\ge 0, j\in K\}$. Then $D\partial \mu\|\cdot\|_1(x^*|\os)(u)$ is nonempty  if and only if $u\in H(x^*)$. Furthermore, we have
\begin{eqnarray}\label{gra}
D\partial \mu\|\cdot\|_1(x^*|\os)(u)=\left\{v\in \R^n\left|\begin{array}{ll} v_j=0, j\in J \\
u_jv_j=0, \os_jv_j\le 0, j\in K
\end{array}\right.\right\}\;\mbox{for all}\;\; u\in H(x^*).\qquad
\end{eqnarray}
\end{proposition}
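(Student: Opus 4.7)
The plan is to exploit the separability $\partial(\mu\|\cdot\|_1)=\prod_{j=1}^n \partial(\mu|\cdot|)$, which acts on each coordinate independently. By the very definition of the graphical derivative, any choice of $u^k\to u$, $v^k\to v$, $t^k\downarrow 0$ with $\bar s + t^k v^k \in \partial(\mu\|\cdot\|_1)(x^* + t^k u^k)$ is equivalent to the componentwise inclusions $\bar s_j + t^k v^k_j \in \partial(\mu|\cdot|)(x^*_j + t^k u^k_j)$ for every $j$. Hence the ``only if'' direction reduces to computing, for each coordinate, the scalar graphical derivative of $\partial(\mu|\cdot|)\colon\R\rightrightarrows\R$ at $(x^*_j,\bar s_j)$; for the ``if'' direction one then only has to verify that the coordinatewise constructions can share a common sequence $t^k\downarrow 0$, which I will see is automatic because in each case the required $u^k_j$ and $v^k_j$ can be taken constant in $k$.

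I will split the components into the three regimes $J$, $K$, and $\{1,\ldots,n\}\setminus I$, and treat each scalar graphical derivative separately. If $j\in J$, then $\mu|\cdot|$ is smooth at $x^*_j\neq 0$ with constant derivative $\bar s_j=\mu\,\mathrm{sign}(x^*_j)$ on a neighbourhood, so for large $k$ the inclusion forces $\bar s_j+t^kv^k_j=\bar s_j$, i.e.\ $v_j=0$, while $u_j$ remains arbitrary. If $j\notin I$, then $x^*_j=0$ and $|\bar s_j|<\mu$; a nonzero $u_j$ would give $t^ku^k_j$ a fixed sign for large $k$ and thus force $\bar s_j+t^kv^k_j=\pm\mu$, driving $v^k_j$ to $\pm\infty$ and contradicting $v^k_j\to v_j$. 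Hence $u_j=0$, and conversely any $v_j$ is reachable via $u^k_j\equiv 0$ and $v^k_j\equiv v_j$, since $|\bar s_j+t^kv_j|\le\mu$ holds for $t^k$ small.

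The main obstacle is the case $j\in K$, where $x^*_j=0$ and $|\bar s_j|=\mu$; assume $\bar s_j=\mu$ by symmetry. A positive $u_j$ forces $t^ku^k_j>0$ eventually, hence $\bar s_j+t^kv^k_j=\mu$, giving $v_j=0$; a negative $u_j$ would demand $\bar s_j+t^kv^k_j=-\mu$ and thus $v^k_j\to-\infty$, which is impossible and explains the sign constraint $u_j\bar s_j\ge 0$ in $H(x^*)$. For $u_j=0$ one can take $u^k_j\equiv 0$, reducing the inclusion to $\mu+t^kv^k_j\in[-\mu,\mu]$; this is compatible with $v^k_j\to v_j$ exactly when $v_j\le 0$, i.e.\ $\bar s_jv_j\le 0$, with sufficiency realised by the constant sequence $v^k_j\equiv v_j$ for $t^k$ small. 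A short blow-up argument, analogous to the one used for $j\notin I$, rules out reaching any $v_j>0$ through alternative sequences with $u^k_j\ne 0$ along a subsequence. In every subcase $u_jv_j=0$ holds automatically, so reassembling the three regimes across $j$, and noting that all constructions are compatible with a single common $t^k\downarrow 0$, yields exactly the characterisation $u\in H(x^*)$ together with the product formula in \eqref{gra}.
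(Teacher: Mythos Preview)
Your proof is correct and follows essentially the same approach as the paper's own proof: a coordinate-wise case analysis based on the separable structure of $\partial(\mu\|\cdot\|_1)$, splitting into the three regimes $j\in J$, $j\in K$, and $j\notin I$, and in each regime analysing the scalar inclusion $\bar s_j+t^kv^k_j\in\partial(\mu|\cdot|)(x^*_j+t^ku^k_j)$. The paper carries out the same three-partition argument directly (without first invoking separability as a reduction principle) and, for the reverse inclusion, also uses constant sequences $(u^k,v^k)\equiv(u,v)$ along any $t^k\downarrow 0$, exactly as you do; your explicit remark that a common $t^k$ can be shared across coordinates because constant sequences suffice is a point the paper leaves implicit.
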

\begin{proof}
For any $x\in \R^n$, note that
\begin{eqnarray}\label{partial}
\partial \mu\|x\|_1=\left\{s\in \R^n\left|\begin{array}{ll} s_j=\mu\, {\rm sgn}\,(x_j) &\mbox{if}\quad  x_j\neq 0\\
s_j\in[-\mu,\mu]  &\mbox{if}\quad x_j=0
\end{array}\right. \right\},
\end{eqnarray}
where ${\rm sgn}:\R\to \{-1,1\}$ is the {\em sign function}. Take any $v\in D\partial \|\cdot\|_1(x^*|\os)(u)$,  there exists sequence $t^k\dn 0$ and $(u^k,v^k)\to (u,v)$ such that $(x^*,\os)+t^k(u^k,v^k)\in \gph \partial\mu\|\cdot\|_1$.  Let us consider three partitions of $j$ described below:

\noindent {\bf  Partition 1.1:} $j\notin I$, i.e.,  $|\os_j|<\mu$. It follows from \eqref{partial} that  $x^*_j=0$. For sufficiently large $k$, we have $|(\os+t^kv^k)_j|<\mu$ and thus $|(x^*+t^ku^k)_j|=0$ by  \eqref{partial} again. Hence $u^k_j=0$, which implies that $u_j=0$ for all $j\notin I$.

\noindent{\bf Partition  1.2:} $j\in J$, i.e., $|\os_j|=\mu$ and $x^*_j\neq 0$. When $k$ is sufficiently large, we have $(x^*+t^ku^k)_j\neq 0$ and derive from \eqref{partial} that
\[
(\os+t^kv^k)_j=\mu\, {\rm sgn}\, (x^*+t^ku^k)_j=\mu\, {\rm sgn}\,x^*_j=\os_j,
\]
which implies that $v_j=0$ for all $j\in J$.

\noindent{\bf Partition  1.3:} $j\in K$, i.e., $|\os_j|=\mu$ and $x^*_j=0$. If there is a subsequence of $(x^*,\os)_j+t^k(u^k,v^k)_j$ (without relabeling) such that $|(\os+t^kv^k)_j|<\mu=|\os_j|$, we have $\os_jv^k_j< 0$ and $(x^*+t^ku^k)_j=0$ by \eqref{partial}.  It follows that  $u^k_j=0$.  Letting $k\to\infty$,
we have  $u_j=0$ and $\os_j v_j \le 0$. Otherwise, we find some $K>0$ such that $|(\os+t^kv^k)_j|=\mu=|\os_j|$ for all $k>K$, which yields $v^k_j=0$. Taking $k\to\infty$ gives us that $v_j=0$. In both situations, we have $u_jv_j=0$ and $\os_jv_j\le 0$.

Combining the conclusions in three cases above gives us that $u\in H(x^*)$ and also verifies the inclusion ``$\subset$'' in \eqref{gra}. To justify the converse inclusion ``$\supset$'', take $u\in H(x^*)$ and any $v\in \R^n$ with $v_j=0$ for $j\in J$ and  $u_jv_j=0, \os_jv_j\le 0$ for $j\in K$. For any $t^k\dn 0$, we prove that $(x^*,\os)+t^k(u,v)\in \gph \partial\mu\|\cdot\|_1$ and thus verify that $v\in D\partial \mu\|\cdot\|_1(x^*|\os)(u)$. For any $t\in \R$, define the set-valued mapping:
\begin{eqnarray*}
{\rm SGN}(t):=\partial |t|=\left\{\begin{array}{ll} {\rm sgn}\,(t) &\mbox{if}\quad  t\neq 0\\
\ [-1,1] &\mbox{if}\quad t=0.
\end{array}\right.
\end{eqnarray*}
Similarly to the proof of ``$\subset$'' inclusion, we consider three partitions of $j$ as follows:

\noindent{\bf  Partition 2.1:} $j\notin I$, i.e.,  $|\os_j|<\mu$. Since $u\in H(x^*)$, we have $u_j=0$. Note also that $x^*_j=0$. Hence we get $(x^*+t^ku)_j=0$ and $(\os+t^kv)_j\in[-\mu,\mu]$  when $k$ is sufficiently large, which means $(\os+t^kv)_j\in \mu\, {\rm SGN}(x^*+t^ku)_j$.

\noindent{\bf Partition  2.2:} $j\in J$, i.e., $|\os_j|=\mu$ and $x^*_j\neq 0$. Since $v_j=0$, we have
\[
{\rm sgn}\,(\os+t^kv)_j={\rm sgn}\,\os_j={\rm sgn}\,(x^*_j)={\rm sgn}\,(x^*+t^ku)_j
\]
and $(x^*+t^ku)_j\neq 0$ when $k$ is large. It follows that $(\os+t^kv)_j\in\mu\, {\rm SGN}(x^*+t^ku)_j$.

\noindent{\bf Partition  2.3:} $j\in K$, i.e., $|\os_j|=\mu$ and $x^*_j=0$. If $u_j=0$, we have $(x^*+t^ku)_j=0$ and $|(\os+t^kv)_j|\le |\os_j|\le \mu$ for sufficiently large $k$, since $\os_jv_j\le 0$. If $u_j\neq 0$, we have $v_j=0$ and
\[
(\os+t^kv)_j=\os_j={\rm sgn}\,(u_j)={\rm sgn}\,(x^*+t^ku)_j
\]
when $k$ is large, since $u_j\os_j\ge 0$. In both cases, we have $(\os+t^kv)_j\in\mu\, {\rm SGN}(x^*+t^ku)_j$.

From those cases, we always have $(x^*,\os)+t^k(u,v)\in \gph \partial\mu\|\cdot\|_1$ and thus $v\in D\partial \mu\|\cdot\|_1(x^*|\os)(u)$.
\end{proof}

 As a consequence, we establish a characterization of strong metric subregularity for $\partial F_1$.

\begin{theorem}[Characterization of strong metric subregularity for $\partial F_1$]\label{Thm56} Let $x^*$ be an optimal solution to problem~\eqref{Prof}.  Suppose that $\nabla f$ is differentiable at $x^*$. Define  $\mathcal{E}:=\big\{j\in \{1,\ldots,n\}\,|\; |(\nabla f(x^*))_j|=\mu\big\}$, $K:=\{j\in \mathcal{E}\,|\; x^*_j=0\}$, $\mathcal{U}:=\{u\in \R^\mathcal{E}\,|\; u_j(\nabla f(x^*))_j\le 0, j\in K\}$, and $\mathcal{H}_\mathcal{E}(x^*):=[\nabla^2 f(x^*)_{i,j}]_{i,j\in\mathcal{E}}$. Then the following statements are equivalent:
\begin{enumerate}[{\rm(i)}]

\item  The subdifferential mapping  $\partial F_1$ is strongly metrically regular at $x^*$ for $0$

\item  $\mathcal{H}_\mathcal{E}(x^*)$ is positive definite over  $\mathcal{U}$ in the sense that
\begin{equation}\label{HE}
\la \mathcal{H}_\mathcal{E}(x^*) u,u\ra>0\qquad \mbox{for all}\quad u\in \mathcal{U}\setminus\{0\},
\end{equation}

\item  $\mathcal{H}_\mathcal{E}(x^*)$ is nonsingular over $\mathcal{U}$ in the sense that
\begin{equation}\label{HE2}
\ker \mathcal{H}_\mathcal{E}(x^*)\cap \mathcal{U}=\{0\}.
\end{equation}
\end{enumerate}

 Moreover, if \eqref{HE} is satisfied then $\partial F_1$ is strongly metrically regular at $x^*$ for $0$ with any modulus $\kk>c^{-1}$, where
\begin{equation}\label{c3}
c:=\min\left\{\frac{\la \mathcal{H}_\mathcal{E}(x^*)u,u\ra}{\|u\|^2}\,\Big|\; u\in \mathcal{U}\right\}
\end{equation}
with the convention $\frac{0}{0}=\infty$.
\end{theorem}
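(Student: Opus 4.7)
The plan is to invoke the graphical-derivative criterion for strong metric subregularity from Proposition \ref{SR-grow2} applied to $h = F_1$. Set $\bar s := -\nabla f(x^*)$; by the optimality condition $0 \in \nabla f(x^*) + \mu\partial\|x^*\|_1$, one has $\bar s \in \mu\partial\|x^*\|_1$, and with this choice the index sets $I,J,K$ from Proposition \ref{Gra} coincide with $\mathcal{E}, J, K$ in the theorem (note that $u_j\bar s_j \ge 0$ reads $u_j(\nabla f(x^*))_j \le 0$), so the cone $H(x^*)$ identifies, via the embedding $\R^{\mathcal{E}} \hookrightarrow \R^n$ that pads by zeros off $\mathcal{E}$, with the polyhedral cone $\mathcal{U}$.

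Since $\nabla f$ is differentiable at $x^*$ with Jacobian $\nabla^2 f(x^*)$, the Taylor expansion $\nabla f(x^* + tu^k) = \nabla f(x^*) + t\nabla^2 f(x^*)u^k + o(t)$ plugged into Definition \ref{GD} yields the sum rule
\[
D(\partial F_1)(x^*\,|\,0)(u) \;=\; \nabla^2 f(x^*)u \;+\; D(\mu\partial\|\cdot\|_1)(x^*\,|\,\bar s)(u),
\]
whose right-hand side is nonempty exactly when $u \in H(x^*)$ by Proposition \ref{Gra}. The key observation is that for every $u \in H(x^*)$ and every $v \in D(\mu\partial\|\cdot\|_1)(x^*|\bar s)(u)$ one has $\la v,u\ra = 0$: in the sum $\sum_j v_j u_j$, the indices $j \notin \mathcal{E}$ contribute zero because $u_j = 0$, the indices $j \in J$ contribute zero because $v_j = 0$, and the indices $j \in K$ contribute zero because $u_j v_j = 0$. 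Hence $\la w,u\ra = \la \nabla^2 f(x^*)u, u\ra = \la \mathcal{H}_{\mathcal{E}}(x^*)u_{\mathcal{E}}, u_{\mathcal{E}}\ra$ for every $w \in D(\partial F_1)(x^*|0)(u)$, and the graphical-derivative characterization \eqref{pos4} in Proposition \ref{SR-grow2} delivers the equivalence (i)$\Leftrightarrow$(ii).

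The equivalence (ii)$\Leftrightarrow$(iii) uses convexity of $f$: the Hessian $\nabla^2 f(x^*)$ is symmetric positive semidefinite, and hence so is its principal submatrix $\mathcal{H}_{\mathcal{E}}(x^*)$; but for such a matrix $M$ one has $\la Mu, u\ra > 0$ exactly when $u \notin \ker M$, which intersected with the cone $\mathcal{U}$ is exactly \eqref{HE2}. The modulus estimate $\kappa > c^{-1}$ then follows directly from the implication [(iii)$\Rightarrow$(i)] of Proposition \ref{SR-grow2} applied with $\ell = c$, since \eqref{c3} defines $c$ as the largest constant with $\la \mathcal{H}_{\mathcal{E}}(x^*)u, u\ra \ge c\|u\|^2$ on the cone $\mathcal{U}$.

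The main technical subtlety I expect to require care is the validity of the sum rule for the graphical derivative under the mere assumption that $\nabla f$ is (Fr\'echet) differentiable at the single point $x^*$ rather than continuously differentiable on a neighbourhood; however, a direct diagonal-sequence argument based on the Taylor expansion above establishes both inclusions, so this is a minor book-keeping issue rather than a genuine obstacle.
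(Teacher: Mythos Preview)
Your proposal is correct and follows essentially the same route as the paper: both apply Proposition~\ref{SR-grow2} via the sum rule $D(\partial F_1)(x^*|0)(u)=\nabla^2 f(x^*)u+D(\mu\partial\|\cdot\|_1)(x^*|\bar s)(u)$, then use Proposition~\ref{Gra} to see that $\la v,u\ra=0$ for every $v$ in the second summand, reducing the positive-definiteness condition \eqref{pos4} to \eqref{HE}; the (ii)$\Leftrightarrow$(iii) step and the modulus estimate are handled identically. The only cosmetic difference is that the paper cites \cite[Proposition~4A.2]{DR} for the sum rule whereas you sketch it directly from the Taylor expansion, which is fine since the hypothesis ``$\nabla f$ differentiable at $x^*$'' is exactly what that reference requires.
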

\begin{proof} First let us verify the equivalence between  (i) and  (ii). Suppose that  (i) is valid, i.e.,  $\partial F_1$ is strongly metrically subregular at $x^*$  for $0$. It follows from Proposition~\ref{SR-grow2} that
there is some $c_1>0$ satisfying that
\begin{equation}\label{vu}
\la w,u\ra\ge c_1\|u\|^2\quad \mbox{for all}\quad w\in D(\nabla f+\partial \mu\|\cdot\|_1)(x^*|0)(u).
\end{equation}
Due to the sum rule of graphical derivative \cite[Proposition~4A.2]{DR}, we have
\[
D(\nabla f+\partial \mu\|\cdot\|_1)(x^*|0)(u)=\nabla^2f(x^*)u+D\partial \mu\|\cdot\|_1(x^*|-\nabla f(x^*))(u).
\]
Thus \eqref{vu} is equivalent to
\begin{equation}\label{vu2}
\la \nabla^2f(x^*)u,u\ra+\la v,u\ra\ge  c_1\|u\|^2\quad \mbox{for all}\quad v\in D\partial \mu\|\cdot\|_1(x^*|-\nabla f(x^*))(u).
\end{equation}
Define $\mathcal{V}:=\{u\in \R^n|\; u_j=0, j\notin\mathcal{E},u_j(\nabla f(x^*))_j\le 0, j\in K\}$. Thanks to Proposition~\ref{Gra}, we have
\begin{equation}\label{zero}
\la v,u\ra=0\qquad \mbox{ for all }\qquad v\in D\partial \mu\|\cdot\|_1(x^*|-\nabla f(x^*))(u), u\in \mathcal{V}.
\end{equation} This together with \eqref{vu2} clearly verifies \eqref{HE}.

Conversely, suppose that \eqref{HE} is satisfied meaning that $c$ in \eqref{c3} is positive. Hence we have
\begin{equation}\label{hu}
\la \mathcal{H}_\mathcal{E}(x^*) u,u\ra\ge c\|u\|^2\qquad \mbox{for all}\quad u\in \mathcal{U}.
\end{equation}
It follows from Proposition~\ref{Gra} that $D\partial \mu \|\cdot\|_1(x^*|-\nabla f(x^*))(u)\neq \emptyset$ if and only if $u\in \mathcal{V}$. For any $u\in \mathcal{V}$ and $v\in D\partial \mu\|\cdot\|_1(x^*|-\nabla f(x^*))(u)$, we write $u=\{0_{\mathcal{E}^C}\}\times\{u_\mathcal{E}\}\in \R^{\mathcal{E}^C}\times\R^\mathcal{E}$ with $\mathcal{E}^C=\{1,\ldots,n\}\setminus\mathcal{E}$. Obtain from  \eqref{HE} and \eqref{zero}  that
\[
\la \nabla^2f(x^*)u,u\ra+\la v,u\ra=\la \mathcal{H}_\mathcal{E}(x^*) u_\mathcal{E},u_\mathcal{E}\ra+\la v,u\ra\ge c\|u_\mathcal{E}\|^2+0=c\|u\|^2,
\]
which verifies \eqref{vu2} and thus \eqref{vu} with $c_1=c$. By Proposition~\ref{SR-grow2}, $\partial F_1$ is strongly metrically subregular with any modulus $\kk>c^{-1}$.  This clarifies the equivalence between  (i) and  (ii) and the last statement of the theorem. Moreover, the equivalence between (ii) and (iii) is trivial due to the fact that $f$ is convex and thus $\mathcal{H}_\mathcal{E}(x^*)$ is positive semi-definite. \end{proof}

\begin{corollary}[Linear convergence of FBS method for $\ell_1$-regularized problems] Let $(x^k)_{k\in\NN}$ and $(\al_k)_{k\in\NN}$ be the sequences generated from FBS method for problem \eqref{Prof}. Suppose that the solution set $S^*$ is not empty, $(x^k)_{k\in\NN}$ is converging to some $x^*\in S^*$, and that  $f$ is $\mathcal{C}^2$ around $x^*$. If condition \eqref{HE} holds, then $(x^k)_{k\in\NN}$ and $(F_1(x^k))_{k\in\NN}$ are Q-linearly convergent to $x^*$ and $F_1(x^*)$  respectively with  rates determined in {\rm Corollary~\ref{LCPS}}, where $\kk$ is any positive number smaller than $c$ in \eqref{c3}.
\end{corollary}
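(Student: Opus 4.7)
The proof will be a direct application of the machinery already developed, so I expect it to be quite short. The plan is to combine Theorem~\ref{Thm56} (which translates the positive-definite condition \eqref{HE} into strong metric subregularity of $\partial F_1$) with Corollary~\ref{TLC2} (which yields sharp Q-linear rates for FBS under strong metric subregularity of $\partial F$, provided $\nabla f$ is locally Lipschitz around the limit point).

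First I would verify the hypotheses of Corollary~\ref{TLC2} for the $\ell_1$-regularized problem \eqref{Prof}. Since the solution set $S^*$ is nonempty and $(x^k)_{k \in \NN}$ converges to $x^* \in S^*$ by assumption, and since $f$ is $\mathcal{C}^2$ around $x^*$, the gradient $\nabla f$ is automatically locally Lipschitz continuous around $x^*$, with constant arbitrarily close to $\lambda_{\max}(\nabla^2 f(x^*))$. Next, I would invoke Theorem~\ref{Thm56}: assumption \eqref{HE} is precisely condition (ii) there, so it implies assertion (i), namely that $\partial F_1 = \nabla f + \partial(\mu \|\cdot\|_1)$ is strongly metrically subregular at $x^*$ for $0$, and moreover with any modulus $\kk^{-1} < c$ where $c$ is defined in \eqref{c3}. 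Equivalently, for every $\kk > 0$ smaller than $c$, $\partial F_1$ is strongly metrically subregular at $x^*$ for $0$ with modulus $\kk^{-1}$.

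With these two ingredients in place, Corollary~\ref{TLC2} applies directly to the sequences generated by FBS for problem \eqref{Prof}. It provides an index $K \in \NN$ such that for all $k > K$,
\begin{equation*}
\|x^{k+1}-x^*\| \le \frac{1}{\sqrt{1+\al\kk}}\|x^k-x^*\| \qquad \text{and} \qquad |F_1(x^{k+1})-F_1(x^*)| \le \frac{\sqrt{1+\al\kk}+1}{2\sqrt{1+\al\kk}} |F_1(x^{k})-F_1(x^*)|,
\end{equation*}
with $\al = \min\{\al_K/2, \theta/(2L)\}$ for any Lipschitz constant $L$ of $\nabla f$ near $x^*$, and $\kk < c$. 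These are precisely the rates listed in Corollary~\ref{LCPS}, so the conclusion of the corollary follows.

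There is essentially no obstacle; the entire content of this corollary is the combination of two previously established results. The only small point to be careful about is the quantification of the modulus: Theorem~\ref{Thm56} supplies strong metric subregularity with \emph{any} $\kk > c^{-1}$, so in the rate estimates one takes $\kk$ as \emph{any positive number smaller than} $c$, which matches the statement verbatim.
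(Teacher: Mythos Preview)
Your proposal is correct and matches the paper's own proof essentially verbatim: the paper simply writes ``The result follows from Corollary~\ref{TLC2}, Proposition~\ref{Gra}, and the proof of Corollary~\ref{LCPS},'' which is exactly the combination you spell out (your citation of Theorem~\ref{Thm56} already packages the use of Proposition~\ref{Gra}).
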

\begin{proof}
The result follows from Corollary~\ref{TLC2}, Proposition~\ref{Gra}, and the proof of Corollary~\ref{LCPS}.
\end{proof}

\begin{Remark}{\rm  It is worth noting  that condition \eqref{HE2} is strictly weaker than the assumption used in \cite{HYZ} that $H_{\mathcal{E}}$ has full rank  to obtain the linear convergence of FBS for \eqref{Prof}. Indeed, let us take into account the case $n=2$, $\mu=1$, and  $f(x_1,x_2)=\frac{1}{2}(x_1+x_2)^2+x_1+x_2$. Note that $x^*=(0,0)$ is an optimal solution to problem \eqref{Prof}. Moreover, direct computation gives us that  $\nabla f(x^*)=(1,1)$, $\mathcal{E}=\{1,2\}$, $\mathcal{V}=\R_-\times\R_-$, and $H_\mathcal{E}(x^*)=\begin{pmatrix} 1 & 1\\1 &1\end{pmatrix}$. It is clear that $H_\mathcal{E}(x^*)$ does not have full rank, but condition  \eqref{HE} and its equivalence \eqref{HE2} hold.
}
\end{Remark}

\subsection{Global Q-linear convergence of ISTA on Lasso problem}
In this section we study the linear convergence of ISTA for Lasso problem \eqref{Pros}. The following lemma taken from \cite[Lemma~10]{BNPS} plays an important role in our proof.

\begin{lemma}[Global error bound]\label{eb} Fix any $R>\frac{\|b\|^2}{2\mu}$. Suppose that $x^*$ is an optimal solution to problem \eqref{Pros}. Then we have
\begin{equation}
F_2(x)-F_2(x^*)\ge \frac{\gm_R}{2} d^2(x;S^*) \quad \mbox{for all}\quad \|x\|_1\le R,
\end{equation}
where
\begin{equation}\label{gamma_R}
\gm_R:=\nu^2\left(1+\frac{\sqrt{5}}{2}\mu R+(R\|A\|+\|b\|)(4R\|A\|+\|b\|\right)^{-1}
\end{equation}
while $\nu$ is the Hoffman constant defined in {\rm \cite[Definition~1]{BNPS}} only depending on the initial data $A, b, \mu$.
\end{lemma}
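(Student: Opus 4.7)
The plan is to exploit the polyhedrality of the Lasso solution set and invoke Hoffman's error bound, then dominate the resulting residual by the gap $\Delta(x):=F_2(x)-F_2(x^*)$. First, I would extract two invariants of $S^*$: strict convexity of the data fidelity in $Ax$ forces $y^*:=Ax^*$ (and therefore the subgradient certificate $s^*:=-\mu^{-1}A^T(y^*-b)\in\partial\|x^*\|_1$) to be the same for every $x^*\in S^*$. Setting $I:=\{j:|s^*_j|=1\}$, the standard subdifferential characterization of the $\ell_1$-norm then yields the polyhedral description
\[
S^*=\{x:Ax=y^*,\ x_j=0\text{ for }j\notin I,\ s^*_j x_j\ge 0\text{ for }j\in I\},
\]
from which Hoffman's lemma produces a constant $\nu>0$, depending only on the data $(A,b,\mu)$ through $A$ and $s^*$, such that
\[
d(x;S^*)\le \nu\Bigl(\|Ax-y^*\|+\sum_{j\notin I}|x_j|+\sum_{j\in I}[-s^*_j x_j]_+\Bigr).
\]

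The second step is to rewrite $\Delta(x)$ in a form that exposes these three residuals. Using $-A^T(y^*-b)=\mu s^*$, direct expansion gives
\[
\Delta(x)=\tfrac{1}{2}\|A(x-x^*)\|^2+\mu\sum_{j=1}^n\bigl(|x_j|-s^*_j x_j\bigr),
\]
with both summands nonnegative. The first immediately yields $\|Ax-y^*\|^2\le 2\Delta(x)$, and on $I$ the identity $|x_j|-s^*_j x_j=2[-s^*_j x_j]_+$ bounds $\sum_{j\in I}[-s^*_j x_j]_+$ linearly by $\Delta(x)/\mu$. The sum over $j\notin I$ is the delicate one: componentwise one only has $|x_j|-s^*_j x_j\ge(1-|s^*_j|)|x_j|$ with a gap that can vanish as $|s^*_j|\uparrow 1$, so to avoid a nonuniform constant I would instead bound $\sum_{j\notin I}|x_j|^2\le R\sum_{j\notin I}|x_j|$ using $\|x\|_1\le R$ and combine this with $\mu\sum_j(|x_j|-s^*_j x_j)\le\Delta(x)$, producing a factor that grows with $R$ but does not depend on $\min_{j\notin I}(1-|s^*_j|)$.

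The final step is to square the Hoffman inequality and assemble the three bounds via Cauchy--Schwarz. The precise form of $\gm_R$ in \eqref{gamma_R} emerges at this stage: the mixed cross-terms $\|A(x-x^*)\|\cdot\|x-x^*\|_1$ are absorbed using $\|Ax-b\|\le R\|A\|+\|b\|$ and $\|Ax^*-b\|\le\|b\|$ (the latter valid because $R>\|b\|^2/(2\mu)$ places $x^*$ in the sublevel set $\{F_2\le F_2(0)\}$), producing the factor $(R\|A\|+\|b\|)(4R\|A\|+\|b\|)$, while the additive $\tfrac{\sqrt{5}}{2}\mu R$ comes from an optimized Cauchy--Schwarz weighting between the residual pieces on $I$ and on its complement. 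The main obstacle is precisely this bookkeeping---choosing the interpolation weights so that the constants collapse into the sharp $\gm_R$ of \eqref{gamma_R} rather than a coarser expression; once the correct scalar tuning is made, $d^2(x;S^*)\le 2\gm_R^{-1}\Delta(x)$ follows by elementary algebra.
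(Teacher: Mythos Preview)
The paper does not prove this lemma at all: it is quoted verbatim from \cite[Lemma~10]{BNPS}, as stated explicitly in the sentence introducing it. There is therefore no ``paper's own proof'' to compare your proposal against.

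Your sketch is nonetheless a reasonable outline of how such a result is obtained, and is in the spirit of the argument in \cite{BNPS}: the invariance of $Ax^*$ and of the dual certificate $s^*$ across $S^*$, the polyhedral description of $S^*$, Hoffman's lemma, and the exact expansion
\[
\Delta(x)=\tfrac{1}{2}\|A(x-x^*)\|^2+\mu\sum_{j}\bigl(|x_j|-s^*_j x_j\bigr)
\]
are all correct and are the key ingredients. One point deserves a caution. For the indices $j\notin I$ you write that you would ``bound $\sum_{j\notin I}|x_j|^2\le R\sum_{j\notin I}|x_j|$'' and combine this with $\mu\sum_j(|x_j|-s^*_j x_j)\le\Delta(x)$ to avoid the factor $\min_{j\notin I}(1-|s^*_j|)$. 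As written this does not close: the inequality $\sum|x_j|^2\le R\sum|x_j|$ goes the wrong direction for controlling the linear residual by $\Delta(x)$, and $\sum_j(|x_j|-s^*_j x_j)$ only dominates $\sum_{j\notin I}|x_j|$ after multiplying by that very gap. In fact the gap $\min_{j\notin I}(1-|s^*_j|)$ depends only on $A,b,\mu$ (since $s^*=-\mu^{-1}A^T(y^*-b)$ is determined by the data), so there is no harm in letting it be absorbed into the Hoffman-type constant $\nu$; this is what happens in \cite{BNPS}, where $\nu$ is defined so as to encode all data-dependent polyhedral constants. Your stated aim of producing an $R$-dependent bound that is \emph{independent} of this gap is therefore both unnecessary and, by the route you indicate, not achievable.
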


\begin{theorem}[Global Q-linear convergence of ISTA]\label{ISTA} Let $(x^k)_{k\in\NN}$ be the sequence generated by ISTA for problem~\eqref{Pros} that converges to an optimal solution $x^*\in S^*$. Then $(x^k)_{k\in\NN}$ and $(F_2(x^k))_{k\in\NN}$ are  globally Q-linearly convergent to $x^*$ and $F_2(x^*)$ respectively:\begin{align}
&\|x^{k+1}-x^*\|\le \frac{1}{\sqrt{1+\frac{\al\gm_R}{4}}}\|x^{k}-x^*\|\label{Con1}\\
&|F_2(x^{k+1})-F_2(x^*)|\le \frac{2\sqrt{1+\frac{\al\gm_R}{4}}}{\sqrt{1+\frac{\al\gm_R}{4}}+1}|F_2(x^k)-F_2(x^*)|\label{Con2}
\end{align}
for all $k\in\NN$, where $R$  is any number bigger than $\|x^0\|+\frac{\|b\|^2}{\mu}$  and
$\gamma_R$ is given as in \eqref{gamma_R} while $\al:=\frac{1}{2} \min\left\{\sigma,\frac{\theta}{\lambda_{\rm max}(A^TA)}\right\}$.
\end{theorem}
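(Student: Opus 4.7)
The plan is to perform the Q-linear convergence argument of Theorem \ref{TLC3} \emph{globally}, replacing the local quadratic growth coming from metric subregularity with the global estimate of Lemma \ref{eb}. Two structural features of the Lasso problem make this possible already from iteration zero: the gradient $\nabla f(x)=A^T(Ax-b)$ is globally Lipschitz on all of $\R^n$ with constant $\lambda_{\max}(A^TA)$, so the last part of Proposition \ref{lema-alpha} gives $\al_k \ge 2\al$ for every $k\in\NN$; and the objective sequence $(F_2(x^k))_{k\in\NN}$ is non-increasing by Corollary \ref{prop1}(ii), which keeps every iterate inside a ball on which Lemma \ref{eb} applies.

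First I would verify that $\|x^k\|_1 \le R$ for every $k$. Applying Corollary \ref{prop1}(i) at $x=\bar p$ for any optimal $\bar p\in S^*$ shows that $(\|x^k-\bar p\|)_{k\in\NN}$ is non-increasing, so $\|x^k\| \le \|x^0\|+2\|\bar p\|$. Since $F_2(\bar p)\le F_2(0)=\tfrac12\|b\|^2$ and $\mu\|\bar p\|_1\le F_2(\bar p)$, one has $\|\bar p\|_1 \le \|b\|^2/(2\mu)$, and combining this with the hypothesis $R>\|x^0\|+\|b\|^2/\mu$ delivers the uniform containment $\|x^k\|_1\le R$ for every $k$. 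Consequently Lemma \ref{eb} is available at every iterate:
\begin{equation*}
F_2(x^k)-F_2(x^*)\ge \tfrac{\gm_R}{2}\,d^2(x^k;S^*)\qquad \text{for every } k\in\NN.
\end{equation*}

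Now \eqref{Con1} is obtained by running the chain of inequalities of Theorem \ref{TLC3} without the ``for sufficiently large $k$'' caveat. From Corollary \ref{prop1}(i) applied at $x=\Pi_{S^*}(x^k)$, together with $\al_k\ge \al$ and the inequality above,
\begin{equation*}
d^2(x^k;S^*)-d^2(x^{k+1};S^*)\ge 2\al\bigl[F_2(x^{k+1})-F_2(x^*)\bigr]\ge \al\gm_R\, d^2(x^{k+1};S^*).
\end{equation*}
Next, Corollary \ref{prop1}(i) at $x=\Pi_{S^*}(x^{k+1})$ shows that $(\|x^r-\Pi_{S^*}(x^{k+1})\|)_{r\ge k+1}$ is non-increasing; letting $r\to\infty$ gives $\|x^*-\Pi_{S^*}(x^{k+1})\|\le d(x^{k+1};S^*)$ and hence $\|x^{k+1}-x^*\|\le 2\,d(x^{k+1};S^*)$. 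Substituting these two estimates produces $\|x^k-x^*\|^2\ge (1+\al\gm_R/4)\|x^{k+1}-x^*\|^2$, which is \eqref{Con1}. The contraction \eqref{Con2} then follows along exactly the same lines as in the second half of Theorem \ref{TLC3}: combining Corollary \ref{prop1}(i)--(ii) with \eqref{Con1} yields $F_2(x^k)-F_2(x^{k+1})\ge \beta\bigl[F_2(x^{k+1})-F_2(x^*)\bigr]$ with $\beta=\tfrac{\sqrt{1+\al\gm_R/4}-1}{\sqrt{1+\al\gm_R/4}+1}$, and rearranging gives the claimed Q-linear contraction on the functional values.

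The principal obstacle is the very first step: obtaining the uniform bound $\|x^k\|_1\le R$ from iteration zero in the specific form of $R$ stated. A direct use of $F_2$-monotonicity alone only delivers $\mu\|x^k\|_1\le F_2(x^0)$, which depends on $\|Ax^0-b\|$, and recasting this into the compact form $\|x^0\|+\|b\|^2/\mu$ requires combining Fej\'er monotonicity of the distances $\|x^k-\bar p\|$ with the a priori bound $\|\bar p\|_1 \le \|b\|^2/(2\mu)$ for the minimum-norm optimal solution. Once this uniform containment is in place, the rest of the proof is a direct globalization of Theorem \ref{TLC3}, because both the step-size lower bound and the error-bound constant $\gm_R$ are already available from iteration zero rather than only asymptotically.
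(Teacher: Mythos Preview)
Your overall plan matches the paper's: use the global Lipschitz constant $\lambda_{\max}(A^TA)$ to get $\al_k\ge 2\al$ from iteration zero via Proposition~\ref{lema-alpha}, confine all iterates to the region where Lemma~\ref{eb} applies, and then rerun the chain of inequalities from Theorem~\ref{TLC3} with $\gm_R$ in place of $\kk$. The paper proceeds exactly this way, recording the key inequality $\|x^k-x^*\|^2-\|x^{k+1}-x^*\|^2\ge \al\gm_R\, d^2(x^{k+1};S^*)$ and then deferring to Theorem~\ref{TLC3}.

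There is, however, a genuine slip in your ``substituting'' step. You apply Corollary~\ref{prop1}(i) at $x=\Pi_{S^*}(x^k)$, which yields an inequality with $d^2(x^k;S^*)$ on the left, not $\|x^k-x^*\|^2$. Combining $d^2(x^k;S^*)\ge(1+\al\gm_R)\,d^2(x^{k+1};S^*)$ with $\|x^{k+1}-x^*\|\le 2\,d(x^{k+1};S^*)$ only gives
\[
\|x^k-x^*\|^2\;\ge\; d^2(x^k;S^*)\;\ge\;\tfrac{1+\al\gm_R}{4}\,\|x^{k+1}-x^*\|^2,
\]
which is not a contraction unless $\al\gm_R>3$ and is in any case not the stated factor $1+\al\gm_R/4$. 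The remedy is precisely what the paper (and the proof of Theorem~\ref{TLC3}) does: apply Corollary~\ref{prop1}(i) at $x=x^*$ itself to obtain
\[
\|x^k-x^*\|^2-\|x^{k+1}-x^*\|^2\;\ge\;\al\gm_R\, d^2(x^{k+1};S^*)\;\ge\;\tfrac{\al\gm_R}{4}\,\|x^{k+1}-x^*\|^2,
\]
from which \eqref{Con1} follows immediately; your derivation of \eqref{Con2} via the $\beta$-argument is then fine. A smaller point: your Fej\'er-monotonicity bound yields $\|x^k\|<R$ in the Euclidean norm, whereas Lemma~\ref{eb} is stated for $\|x\|_1\le R$; the paper makes the same elision, so this is a shared looseness rather than an error specific to your write-up.
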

\begin{proof}
Note that Lasso always has optimal solutions. With  $x^*\in S^*$, we have
\[
F_2(0)=\frac{1}{2}\|b\|^2\ge F_2(x^*)\ge \mu\|x^*\|_1,
\]
which implies that $\|x^*\|\le \|x^*\|_1\le \frac{1}{2\mu}\|b\|^2$. It follows from Corollary~\ref{prop1}(i) that
\[
\|x^k\|\le \|x^k-x^*\|+\|x^*\|\le \|x^0-x^*\|+\|x^*\|\le \|x^0\|+2\|x^*\|\le \|x^0\|+\frac{\|b\|^2}{\mu}<R
\]
for all $k\in \NN$. Thanks to Lemma~\ref{eb}, Corollary~\ref{prop1}(i), and Proposition~\ref{lema-alpha} we have
\begin{equation}\label{min}
\|x^k-x^*\|^2-\|x^{k+1}-x^*\|^2\ge \al\gm_R d^2(x^{k+1};S^*)
\end{equation}
with $\al=\frac{1}{2} \min\left\{\sigma,\frac{\theta}{\lambda_{\rm max}(A^TA)}\right\}$ and the note that $\lambda_{\rm max}(A^TA)$ is the global Lipschitz constant of $\frac{1}{2}\|Ax-b\|^2$. The proof of \eqref{Con1} and \eqref{Con2} are quite similar to the one of \eqref{LC4} and \eqref{LC5} in Theorem~\ref{TLC3} by using \eqref{min} instead of \eqref{im1} there.
\end{proof}

By using Lemma~\ref{eb} and Theorem~\ref{TLC1}, we can prove that indeed $(x^k)_{k\in\NN}$ and $(F_2(x^k))_{k\in\NN}$ are  converging globally R-linearly to $x^*$ and $F_2(x^*)$ with better rates $(1+\al\gm_R)^{-\frac{1}{2}}$ and $(1+\al\gm_R)^{-1}$, respectively. A very similar argument has been obtained recently from \cite{BNPS} with a different approach via K\L-inequality. Here we prove the global Q-linear convergence.  Observe further that the linear rates in Theorem~\ref{ISTA} depends on the initial point $x^0$. However, the local linear rates around optimal solutions are uniform and independent from the choice of $x^0$ as mentioned in Remark~\ref{rem}.

\begin{corollary}[Local Q-linear convergence of ISTA with uniform rate]
Let $(x^k)_{k\in\NN}$ be the sequence generated by ISTA for problem \eqref{Pros} that converges to an optimal solution $x^*\in S^*$. Then \eqref{Con1} and \eqref{Con2} are satisfied when $k$ is sufficiently large, where $\al= \min\left\{\frac{\sigma}{2},\frac{\theta}{2\lambda_{\rm max}(A^TA)}\right\}$ and $R$ is any number bigger than $\frac{\|b\|^2}{2\mu}$.
\end{corollary}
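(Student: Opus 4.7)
The plan is to mimic the proof of Theorem~\ref{ISTA} verbatim, but to replace the crude $\ell_1$-norm bound $\|x^k\|_1 \le R$ — which in that theorem had to absorb the starting point $x^0$ — by a tail bound $\|x^k\|_1 \le R$ that only uses the limit $x^*$. The key observation is that from $F_2(0) \ge F_2(x^*) \ge \mu \|x^*\|_1$ one gets $\|x^*\|_1 \le \|b\|^2/(2\mu)$, so whenever $R > \|b\|^2/(2\mu)$ we automatically have $\|x^*\|_1 < R$.

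First I would fix $R > \|b\|^2/(2\mu)$ and set $\delta := R - \|x^*\|_1 > 0$. Since $x^k \to x^*$ and the $\ell_1$-norm is continuous, there exists $K \in \NN$ with $\|x^k\|_1 \le \|x^*\|_1 + \delta = R$ for every $k \ge K$. For such $k$, Lemma~\ref{eb} applies and yields
\[
F_2(x^k) - F_2(x^*) \ge \tfrac{\gm_R}{2}\, d^2(x^k; S^*),
\]
with $\gm_R$ as in \eqref{gamma_R}; crucially, $\gm_R$ depends only on $A,b,\mu$ and $R$, not on $x^0$. Next, because $\nabla f(x) = A^T(Ax-b)$ is globally Lipschitz on $\R^n$ with constant $\lambda_{\rm max}(A^TA)$, Proposition~\ref{lema-alpha} gives the uniform lower bound $\al_k \ge \min\{\sigma, \theta/\lambda_{\rm max}(A^TA)\} = 2\al$ for every $k \in \NN$. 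Combining this with Corollary~\ref{prop1}(i) applied at $x = \Pi_{S^*}(x^k) \in S^*$ produces the telescoping inequality
\[
\|x^k - \Pi_{S^*}(x^k)\|^2 - \|x^{k+1} - \Pi_{S^*}(x^k)\|^2 \ge 2\al_k [F_2(x^{k+1}) - F_2(x^*)] \ge \al\gm_R \, d^2(x^{k+1};S^*),
\]
that is, the analogue of \eqref{min} for all $k \ge K$.

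From here, one can copy the argument already used for \eqref{LC4}--\eqref{LC5} in Theorem~\ref{TLC3} (equivalently, for \eqref{Con1}--\eqref{Con2} in Theorem~\ref{ISTA}): using $\|x^{k+1} - x^*\| \le 2\, d(x^{k+1};S^*)$ (since $\|x^* - \Pi_{S^*}(x^{k+1})\| \le d(x^{k+1};S^*)$ by letting the iterates travel further in $S^*$) one obtains \eqref{Con1}, and Corollary~\ref{prop1}(ii) combined with the triangle inequality on $\|x^{k+1}-x^k\|$ then yields \eqref{Con2}. Since $\gm_R$ and $\al$ depend only on $A,b,\mu,R$ (and the scheme parameters $\sigma,\theta$), the resulting rates are uniform over all initial points.

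The main — and in fact only — obstacle is the opening step: one must be sure that $R > \|b\|^2/(2\mu)$ really does eventually majorise $\|x^k\|_1$. This is not automatic from Corollary~\ref{prop1}(i) (which only controls $\|x^k - x^*\|$, not $\|x^k\|_1$), but it is immediate from $\ell_1$-continuity together with the intrinsic bound $\|x^*\|_1 \le \|b\|^2/(2\mu)$ derived above; everything after that is mechanical rerun of the earlier proofs.
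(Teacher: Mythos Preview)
Your proposal is correct and follows essentially the same approach as the paper's own proof: bound $\|x^*\|_1\le \|b\|^2/(2\mu)$, use convergence $x^k\to x^*$ to get $\|x^k\|_1\le R$ for large $k$, then invoke Lemma~\ref{eb} together with the global step-size bound from Proposition~\ref{lema-alpha} to reproduce \eqref{min}, and finally rerun the argument of Theorem~\ref{TLC3}/Theorem~\ref{ISTA}. If anything, your write-up is slightly more careful than the paper's in tracking the $\ell_1$-norm (which is what Lemma~\ref{eb} actually requires) rather than the Euclidean norm.
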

\begin{proof}
Since $x^k$ is converging to $x^*\in S^*$. It follows from the proof of Theorem~\ref{ISTA} that $\|x^*\|\le \frac{\|b\|^2}{2\mu}<R$. Hence there exists  $K\in\NN$ such that $\|x^k\|<R$ for any $k>K$. By using  Lemma~\ref{eb} and Corollary~\ref{prop1}(i), we also obtain  \eqref{min} for all $k>K$.  Following the same arguments as in Theorem~\ref{ISTA} justifies the corollary.
\end{proof}

\section{Uniqueness of optimal solution to $\ell_1$-regularized least square optimization problems}
As discussed in Section 1, the linear convergence of ISTA for Lasso was sometimes obtained by imposing an additional  assumption that Lasso has a unique optimal  solution $x^*$; see, e.g., \cite{TBZ}. Since $\partial F_2$ is always metrically subregular at $x^*$ for $0$ from Remark~\ref{rem}, the uniqueness of $x^*$ is equivalent to the strong metric subregularity of $\partial F_2$ at $x^*$ for $0$. This observation together with  Theorem~\ref{Thm56} allows us to characterize the uniqueness of optimal solution to Lasso in the below theorem. A different characterization for this property could be found in \cite[Theorem~2.1]{ZYC}. Suppose that $x^*$ is an optimal solution, which means $-A^T(Ax^*-b)\in \mu\partial \|x^*\|_1$. In the spirit of Proposition~\ref{Thm56} with $f(x)=\frac{1}{2}\|Ax-b\|^2$, define
\begin{eqnarray}\label{80}
\mathcal{E}:=\big\{j\in \{1,\ldots,n\}\,\big |\; |(A^T(Ax^*-b))_j|=\mu\big\},\; K :=\{j\in \mathcal{E}\,|\; x^*_j=0\},\;  J:=\mathcal{E}\setminus K.\qquad
\end{eqnarray}
Since $-A^T(Ax^*-b)\in\partial  \mu\|x^*\|_1$, if $x_j^*\neq 0$ then $(A^T(Ax^*-b))_j=-\mu\,{\rm sign} (x^*_j)$. This tells us that $J=\{j\in \{1,\ldots,n\}|\; x^*_j\neq 0\}:={\rm supp}\, (x^*)$. Furthermore, given an index set $I\subset \{1,\ldots, n\}$, we denote $A_I$ by the submatrix of $A$ formed by its columns $A_i$, $i\in I$ and  $x_I$ by the subvector of $x\in \R^n$ formed by $x_i$, $i\in I$. For any $x\in \R^n$, we also define $\sign (x):= (\sign(x_1), \ldots, \sign(x_n))^T$  and ${\rm Diag}\,(x)$ by the square diagonal matrix with the main entries $x_1, x_2, \ldots, x_n$.

\begin{theorem}[Uniqueness of optimal solution to Lasso problem]\label{uni} Let $x^*$ be an optimal solution to problem \eqref{Pros}. The following statements are equivalent:
\begin{enumerate}[{\rm (i)}]

\item  $x^*$ is  the unique optimal solution to Lasso \eqref{Pros}.

\item The system $A_Jx_J-A_KQ_Kx_K=0$ and  $x_K\in \R^K_+$ has a unique solution $(x_J,x_K)=(0_J,0_K)\in \R^J\times \R^K$, where $Q_K:={\rm Diag}\,\big[\sign(A_K^T(A_Jx^*_J-b))\big]$.

\item   The submatrix $A_J$ has full column rank and the columns of  $A_JA_J^\dag A_KQ_K-A_KQ_K$ are {\em positively linearly independent} in the sense that
\begin{equation}\label{ker}
{\rm Ker}\, (A_JA_J^\dag A_KQ_K-A_KQ_K)\cap \R^K_+= \{0_K\},
\end{equation}
 where $A_J^\dag:=(A_J^TA_J)^{-1}A_J^T$ is the Moore-Penrose pseudoinverse of $A_J$.

\item  The submatrix $A_J$ has full column rank and there exists a {\em Slater point} $y\in \R^m$ such that
\begin{equation}\label{rag}
(Q_KA_K^T A_JA_J^\dag-Q_KA_K^T)y<0.
\end{equation}
\end{enumerate}
\end{theorem}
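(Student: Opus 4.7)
The plan is to chain the equivalences (i)$\Leftrightarrow$(ii)$\Leftrightarrow$(iii)$\Leftrightarrow$(iv), reducing uniqueness to a kernel condition via Theorem~\ref{Thm56} and then reformulating it twice by elementary linear algebra and a standard theorem of alternative. For (i)$\Leftrightarrow$(ii), since $F_2$ is convex piecewise-linear-quadratic, Remark~\ref{rem} tells us that $\partial F_2$ is automatically metrically subregular at $x^*$ for $0$, so uniqueness of $x^*$ (equivalently, $x^*$ being isolated in the convex set $S^*$) is equivalent to the \emph{strong} metric subregularity of $\partial F_2$ at $x^*$ for $0$. Applying Theorem~\ref{Thm56} to Lasso, where $\nabla^2 f = A^T A$ and hence $\mathcal{H}_\mathcal{E}(x^*) = A_\mathcal{E}^T A_\mathcal{E}$ with $\ker \mathcal{H}_\mathcal{E}(x^*) = \ker A_\mathcal{E}$, I would rewrite \eqref{HE2} in terms of the partition $\mathcal{E} = J \cup K$. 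The subgradient condition at $x^*$ forces $\text{supp}(x^*)\subseteq \mathcal{E}$ (in fact equal to $J$), so $Ax^* = A_J x^*_J$, and for $j \in K$ one has $(\nabla f(x^*))_j = \mu (Q_K)_{jj}$ with $(Q_K)_{jj}\in\{-1,+1\}$. The substitution $x_J := u_J$, $x_K := -Q_K u_K$ (a bijection because $Q_K^2 = I$) then converts the sign constraint $u_j(\nabla f(x^*))_j \le 0$ on $K$ into $x_K \ge 0$ and the equation $A_\mathcal{E} u = 0$ into $A_J x_J - A_K Q_K x_K = 0$, yielding (ii).

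For (ii)$\Leftrightarrow$(iii), setting $x_K = 0$ in (ii) immediately forces $A_J$ to have full column rank. Conversely, given full column rank, for any $x_K \ge 0$ the equation $A_J x_J = A_K Q_K x_K$ is solvable iff $A_K Q_K x_K$ lies in the range of $A_J$, equivalently iff $(A_J A_J^\dag - I) A_K Q_K x_K = 0$, since $A_J A_J^\dag$ is the orthogonal projection onto the range of $A_J$; this rewrites (ii) as \eqref{ker}. For (iii)$\Leftrightarrow$(iv), set $M := A_J A_J^\dag A_K Q_K - A_K Q_K$; symmetry of the projection gives $M^T = Q_K A_K^T A_J A_J^\dag - Q_K A_K^T$, and Gordan's theorem of alternative applied to $M^T$ states that exactly one of the systems ``$M^T y < 0$'' and ``$M x_K = 0,\, x_K \ge 0,\, x_K \neq 0$'' is solvable, which is precisely the equivalence of \eqref{rag} with \eqref{ker}.

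The main technical care lies in the first step: one must verify $\text{supp}(x^*) \subseteq \mathcal{E}$ so that the $Q_K$ of \eqref{80} coincides with the diagonal matrix of signs of $(\nabla f(x^*))_K$, and check that the change of variables $u_K \mapsto -Q_K u_K$ is a bijection sending the cone $\{u_K \in \R^K : (Q_K)_{jj} u_j \le 0\}$ onto $\R^K_+$. The remaining two equivalences are then a routine linear-algebraic rearrangement and a direct invocation of Gordan's alternative.
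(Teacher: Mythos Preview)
Your proposal is correct and follows essentially the same route as the paper: both reduce (i)$\Leftrightarrow$(ii) to the strong metric subregularity characterization of Theorem~\ref{Thm56} via Remark~\ref{rem}, perform the same change of variables $u_K\mapsto -Q_Ku_K$ to convert the sign constraints into $\R^K_+$, eliminate $x_J$ via the pseudoinverse $A_J^\dag$ for (ii)$\Leftrightarrow$(iii), and invoke Gordan's alternative for (iii)$\Leftrightarrow$(iv). The only cosmetic difference is that the paper substitutes $x_J=A_J^\dag A_KQ_Kx_K$ back into the equation, whereas you phrase the same elimination as the range/projection condition $(A_JA_J^\dag-I)A_KQ_Kx_K=0$.
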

\begin{proof}{\rm
Since $\partial F_2$ is always metrically subregular at $x^*$ for $0$ from Remark~\ref{rem},  (i) means that $\partial F_2$ is strongly metrically subregular at $x^*$ for $0$. Thus, by Theorem~\ref{Thm56},  (i) is equivalent to
\begin{equation}\label{neh}
\la\mathcal{H}_{\mathcal E}u,u\ra>0\quad \mbox{for all}\quad u\in \mathcal{U}\setminus\{0\}
\end{equation}
with $f(x)=\frac{1}{2}\|Ax-b\|^2$ and $ \mathcal U=\{u\in \R^{\mathcal{E}}|\; u_j(\nabla f(x^*))_j\le 0, j\in K\}$. Note that $\mathcal{H}_{\mathcal E}=[\nabla^2 f(x^*)_{i,j}]_{i,j\in \mathcal{E}}=[(A^TA)_{i,j}]_{i,j\in \mathcal{E}}=A_\mathcal{E}^TA_\mathcal{E}$. Hence \eqref{neh} means the system
\begin{equation}\label{neh1}
0=A_\mathcal{E}u=A_Ju_J+A_Ku_K\quad \mbox{and} \quad u_K\in   \mathcal U_K
\end{equation}
has a unique solution $u=(u_J, u_K)=(0_J, 0_K)\in \R^J\times \R^K$, where $ \mathcal U_K$ is defined by
\[
 \mathcal U_K:=\{u\in \R^K|\; u_k(A^T(Ax^*-b))_k\le 0, k\in K\}.
\]
As observed after \eqref{80}, $J={\rm supp}\, (x^*)$, for each $k\in K$ we have
\[
(A^T(Ax^*-b))_k= (A^T(A_Jx^*_J-b))_k=(A^T_K(A_Jx^*_J-b))_k.
\]

It follows that   $ \mathcal U_K=-Q_K(\R^K_+)$ and $Q_K$ is a nonsingular diagonal square matrix (each diagonal entry is either $1$ or $-1$). Uniqueness of system \eqref{neh1} is equivalent to  (ii). This verifies the equivalence between  (i) and  (ii).

Let us justify the equivalence between  (ii) and (iii).  To proceed, suppose that  (ii) is valid, i.e., the system
\begin{equation}\label{AQ}
A_Jx_J-A_KQ_Kx_K=0 \quad \mbox{with}\quad   (x_J,x_K)\in\R^J\times\R^K_+.
\end{equation}
has a unique solution $(0_J,0_K)\in \R^J\times \R^K$. Choose $x_K=0_K$, the latter tells us that equation  $A_Jx_J=0$ has a unique solution $x_J=0$, i.e., $A_J$ has full column rank. Thus $A_J^TA_J$ is nonsingular.   Furthermore, it follows from \eqref{AQ} that $A_J^TA_Jx_J=A_J^TA_KQ_Kx_K$, which means
\begin{equation}\label{AQ1}
x_J=(A_J^TA_J)^{-1}A_J^TA_KQ_Kx_K=A_J^\dag A_KQ_Kx_K.
\end{equation}
This together with \eqref{AQ} tells us that the system
\begin{equation}\label{AQ2}
A_JA_J^\dag A_KQ_Kx_K-A_KQ_Kx_K=(A_JA_J^\dag A_KQ_K-A_KQ_K)x_K=0, x_K\in \R^K_+
\end{equation}
has a unique solution $x_K=0_K\in\R^K$, which clearly verifies \eqref{ker} and thus {(iii)}.

To justify the converse implication, suppose that (iii) is valid. Consider the equation \eqref{AQ} in  (ii), since $A_J$ has the full rank column, we also have \eqref{AQ1}. It is similar to the above justification that  $x_K$ satisfies equation \eqref{AQ2}. Thanks to \eqref{ker} in  (iii), we get from \eqref{AQ2} that $x_K=0_K$ and thus $x_J=0_J$ by \eqref{AQ1}. This verifies that the equation \eqref{AQ} in (ii) has a unique solution $(x_J,x_K)=(0_J,0_K)$.

Finally, the equivalence between  (iii) and  (iv) follows from the well-known Gordan's lemma and the fact that the matrix $A_JA_J^\dag$ is symmetric.}
\end{proof}

Next let us discuss some known conditions relating the uniqueness of optimal solution to Lasso.
In \cite{F}, Fuchs introduced a sufficient condition for the above property:
\begin{eqnarray}
&&A_J^T(A_Jx^*_J-b)=-\mu\,\sign(x^*_J),\label{opts}\\
&&\|A^T_{J^c}(A_Jx^*_J-b)\|_\infty<\mu,\label{emp}\\
 &&\mbox{$A_J$ has full column rank.}\label{xu}
\end{eqnarray}
The first equality \eqref{opts}  indeed tells us that $x^*$ is an optimal solution to Lasso problem. Inequality \eqref{emp} means that $\mathcal{E}=J$, i.e., $K=\emptyset$  in Theorem~\ref{uni}. \eqref{xu} is also present in our  characterizations. Hence Fuchs' condition implies  (iii) in Theorem~\ref{uni} and is clearly not a necessary condition for  the uniqueness of optimal solution to Lasso problem, since in many situations the set $K$ is not empty.

Furthermore, in  the recent work \cite{Ti} Tibshirani shows that the optimal solution $x^*$ to problem \eqref{Pros} is unique when the matrix $A_\mathcal{E}$ has full column rank. This condition is sufficient for our  (ii) in Theorem~\ref{uni}. Indeed, if $(x_J,x_K)$ satisfies system \eqref{AQ} in  (ii), we have $A_\mathcal{E}[x_J\,\, -Q_Kx_K]^T=0$, which implies that $x_J=0$ and $Q_Kx_K=0$ when $\ker A_\mathcal{E}=0$. Since $Q_K$ is invertible, the latter tells us that $x_J=0$ and $x_K=0$, which clearly verifies  (ii). Tibshirani's condition is also necessary for the uniqueness of optimal solution to Lasso problem for {\em almost all} $b$ in \eqref{Pros}, but it is not for any $b$; a concrete example could be found in \cite{ZYC}.

In the recent works \cite{ZYC,ZYY},  the following useful characterization of unique solution to Lasso has been established  under mild assumptions:
\begin{eqnarray}
&&\mbox{There exists $y\in \R^m$ satisfying $A^T_Jy={\rm sign}\, (x^*_J)$ and $\|A^T_{K}y\|_\infty<1$},\label{exi}\\
 &&\mbox{$A_J$ has full column rank.}\nonumber
\end{eqnarray}
It is still open to us to connect directly this condition to those ones in Theorem~\ref{uni},  although they must be logically equivalent under
the assumptions required in \cite{ZYC,ZYY}. However, our approach via second-order variational analysis is completely different and also provides several new characterizations for the uniqueness of optimal solution to Lasso. It is also worth mentioning here that the standing assumption in \cite{ZYC} that $A$ has full row rank is  relaxed in our study.

To end this section, we note that the procedure in this section could be extended to investigate the same property for other structured optimization problem in \cite{ZYY} as well as the well-known {\em nuclear norm  regularized least square} optimization problem
\begin{equation}\label{nuc}
\min_{X\in \R^{p\times q}}\qquad h(X):=\|\mathcal{A}X-b\|^2+\mu\|X\|_*,
\end{equation}
where $\mathcal{A}:\R^{p\times q}\to \R^m$ is a linear operator and $\|X\|_*$
 is the trace norm (known as well the nuclear norm) of $X$. The recent main result in \cite{ZS} could imply that $\partial h$ is locally metrically regular at any point on its graph under a mild assumption. So $X^*$ is a unique optimal solution to \eqref{nuc} if and only if it is an optimal solution and $\partial h$ is strongly metrically regular at $X^*$ for $0$ under a mild assumption. Then the same approach to problem~\eqref{nuc} will lead  us to several new complete  characterizations for the uniqueness of optimal solution to this problem provided that the graphical derivative $D\partial \|\cdot\|_*$ is fully calculated.

\section{Conclusion}

In this paper we analyze the Q-linear convergence of the forward-backward splitting method for solving nonsmooth convex optimization problems and the uniqueness of optimal solution to Lasso. Our work recovers several recent results in \cite{BNPS,DL,F,HYZ,LFP, LFP2,NNG,Ti,TBZ} and reveals many new information. (Strong) Metric subregularity on the subdifferential and second-order growth condition play significant roles in our analysis. It is well-recognized that K\L-inequality with order  $\frac{1}{2}$, which is equivalent to both latter properties in convex frameworks, is a very useful tool to guarantee the convergence of many proximal-type algorithms even for nonconvex optimization problems.  In future research we intend to study the connection of metric subregularity of subdifferential and second-order growth condition with K\L\  inequality and  their effects to the convergence of proximal algorithms in nonconvex settings. Extending the approach in Section 6 to investigate the uniqueness of optimal solution to $\ell_0$-optimization problem is also a potential project that we are  working on.

\end{document}